\def\ga{\mathfrak{a}}
\def\gg{\mathfrak{g}}
\def\gj{\mathfrak{j}}
\def\gk{\mathfrak{k}}
\def\gl{\mathfrak{l}}
\def\gm{\mathfrak{m}}
\def\go{\mathfrak{o}}
\def\gs{\mathfrak{s}}
\def\gt{\mathfrak{t}}
\def\gu{\mathfrak{u}}
\def\C{\mathbb{C}}
\def\R{\mathbb{R}}
\def\cD{\mathcal{D}}
\def\Span{{\rm Span}\,}
\def\Ad{{\rm Ad}}
\def\ad{{\rm ad}\,}
\def\rank{{\rm rank}\,}
\def\Aut{{\rm Aut}}
\def\Int{{\rm Int}}
\def\Out{{\rm Out}}
\def\Span{{\rm Span}\,}
\def\diag{{\rm diag}}
\newtheorem{theorem}[equation]{Theorem}
\newtheorem{lemma}[equation]{Lemma}
\newtheorem{corollary}[equation]{Corollary}
\newtheorem{proposition}[equation]{Proposition}
\def\sideremark#1{\ifvmode\leavevmode\fi\vadjust{\vbox to0pt{\vss
 \hbox to 0pt{\hskip\hsize\hskip1em
\vbox{\hsize2cm\tiny\raggedright\pretolerance10000 
 \noindent #1\hfill}\hss}\vbox to8pt{\vfil}\vss}}} 
\title{Homogeneity for a Class of Riemannian Quotient Manifolds}
\author{Joseph A. Wolf\,\footnote{Research partially supported by a 
grant from the Simons Foundation.\newline
}}
\date{\vskip -.5cm 16 September 2016}
\begin{document}

\maketitle

\abstract{We study riemannian coverings
$\varphi: \widetilde{M} \to \Gamma\backslash \widetilde{M}$ where
$\widetilde{M}$ is a normal homogeneous space $G/K_1$ fibered over
another normal homogeneous space $M = G/K$ and $K$ is locally
isomorphic to a nontrivial product $K_1\times K_2$\,.
The most familiar such fibrations
$\pi: \widetilde{M} \to M$ are the natural fibrations of Stieffel manifolds
$SO(n_1 + n_2)/SO(n_1)$ over Grassmann manifolds
$SO(n_1 + n_2)/[SO(n_1)\times SO(n_2)]$ and the twistor space bundles
over quaternionic symmetric spaces (= quaternion--Kaehler symmetric spaces 
= Wolf spaces).  The most familiar of these coverings
$\varphi: \widetilde{M} \to \Gamma\backslash \widetilde{M}$
are the universal riemannian coverings of spherical space forms.  When
$M = G/K$ is reasonably well understood, in particular when $G/K$ is a 
riemannian symmetric space or when $K$ is a 
connected subgroup of maximal rank in $G$, we show that the Homogeneity 
Conjecture holds for $\widetilde{M}$.  In other words we show that 
$\Gamma\backslash \widetilde{M}$ is
homogeneous if and only if every $\gamma \in \Gamma$ is an isometry of
constant displacement.  In order to find all the isometries of constant
displacement on $\widetilde{M}$ we work out the full isometry group of
$\widetilde{M}$, extending \' Elie Cartan's determination of the
full group of isometries of a riemannian symmetric space.  We also discuss
some pseudo--riemannian extensions of our results.
}

\begin{quote}
{\footnotesize 
\begin{spacing}{1.4}
\tableofcontents
\end{spacing}
}
\end{quote}

\section{Introduction}
\label{sec1}
\setcounter{equation}{0}

Some years ago I studied riemannian covering spaces $S \to \Gamma\backslash S$
where $S$ is homogeneous.  I conjectured that $\Gamma\backslash S$ is 
homogeneous if and only if every $\gamma \in \Gamma$ is an isometry of
constant displacement (now usually called {\em Clifford translations} or 
{\em Clifford--Wolf isometries}) on $S$.  I'll call that the {\em Homogeneity 
Conjecture}.
This paper proves the conjecture for a class of normal riemannian homogeneous 
spaces $\widetilde{M} = G/K_1$ that fiber over homogeneous spaces $M = G/K$
where $K_1$ is a local direct factor of $K$.  The principal examples are those 
for which $K$ is the fixed point set of an automorphism of $G$ and the Lie
algebra $\gk = \gk_1 \oplus \gk_2$ with $\dim\gk_1 \ne 0 \ne \gk_2$\,.  Those
include the cases where $G/K$ is an hermitian symmetric space, or a 
Grassmann manifold of rank $> 1$, or a
quaternion--Kaehler symmetric space (Wolf space), one of the
irreducible nearly--Kaehler manifolds of $F_4$, $E_6$, $E_7$ or $E_8$, or
everybody's favorite $5$--symmetric space $E_8/A_4A_4$\,.  See \cite{WG1968a}
and \cite{WG1968b} for a complete list.
\medskip

For lack of a better term
I'll refer to such spaces $\widetilde{M} = G/K_1$ as {\em isotropy--split} 
homogeneous spaces and to the fibration $\pi: \widetilde{M} \to M = G/K$ as an
{\em isotropy--splitting} fibration.
\medskip

Here we use isotropy--splitting fibrations $\pi: \widetilde{M} \to M$
as a bootstrap device to study riemannian coverings
$\varphi: \widetilde{M} \to \Gamma \backslash \widetilde{M}$.  Specifically,
$\pi$ is the projection given by $G/K_1 \to G/K$ with $K = K_1K_2$ where
$M$ and $\widetilde{M}$ are normal riemannian homogeneous spaces of the
of the same group $G$ and each $\dim K_i > 0$.  In particular,
$\pi: \widetilde{M} \to M$ is a principal $K_2$--bundle.   The point is to
choose the splitting of $K$ so that $M$ is reasonably well understood.  The 
most familiar example is the case where $\widetilde{M}$ is a Stieffel manifold
and $M$ is the corresponding Grassmann manifold.  More generally we study
the situation where 
\begin{equation}\label{setup}
\begin{aligned}
&G \text{ is a compact connected simply connected Lie group,}\\
&K = K_1K_2 \text{ where the $K_i$ are closed connected subgroups of $G$ 
	such that}\\
& \phantom{XXXX}\text{ (i)  $K = (K_1 \times K_2)/(K_1 \cap K_2)$,}
	\text{ (ii) } \gk_2 \perp \gk_1 \text{ and }
        \text{ (iii) } \dim \gk_1 \ne 0 \ne \dim \gk_2\,, \\
&\text{the centralizers } Z_G(K_1) = Z_{K_1}\widetilde{K_2}
	\text{ and } Z_G(K_2) = Z_{K_2}\widetilde{K_1} \text{ with }
	K_1 = \widetilde{K_1}^0 \text{ and } 
	K_2 = \widetilde{K_2}^0\,, \text{ and } \\
&M = G/K \text{ and } \widetilde{M} = G/K_1 \text{ are normal riemannian 
	homogeneous spaces of } G.
\end{aligned}
\end{equation}
Thus we may assume that the metrics on $M$ and $\widetilde{M}$ are the
normal riemannian metrics defined by the negative of the Killing form of $G$.
Note that $\widetilde{M}$ and $M$ are simply connected, because $G$ is
simply connected and $K_1$ and $K$ are connected.

\begin{lemma}\label{no-nonzero}
There is no nonzero $G$--invariant vector field on $M$.  In other
words, if $\gm = \gk^\perp$ then $\ad_\gg(\gk)|_\gm$ has no nonzero 
fixed vector.
\end{lemma}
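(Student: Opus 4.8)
The plan is to translate the statement about invariant vector fields into a purely Lie-algebraic one and then read off the answer from the centralizer hypothesis in \eqref{setup}. First I would recall the standard correspondence: a $G$--invariant vector field on a homogeneous space $G/K$ is determined by its value at the base point $o = eK$, and that value must be fixed by the linear isotropy representation $k \mapsto \Ad(k)|_\gm$ of $K$ on $\gm = \gk^\perp$; conversely every such fixed vector extends to a unique $G$--invariant field. Because $K$ is connected, a vector $v \in \gm$ is fixed by $\Ad(K)|_\gm$ if and only if it is annihilated by $\ad_\gg(\gk)|_\gm$. This gives the equivalence asserted in the statement and reduces the lemma to showing that $\{v \in \gm : [\gk,v]=0\} = 0$.

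The key step is to locate such a $v$ inside $\gk$. If $[\gk,v]=0$ then in particular $[\gk_1,v]=0$, so $v$ lies in the centralizer $\gz_\gg(\gk_1)$ of $\gk_1$ in $\gg$. Since $K_1$ is connected, $\gz_\gg(\gk_1)$ is exactly the Lie algebra of $Z_G(K_1)$, and the hypothesis $Z_G(K_1) = Z_{K_1}\widetilde{K_2}$ identifies it: its Lie algebra is $\gz(\gk_1) + \gk_2$, where $\gz(\gk_1) \subseteq \gk_1$ is the center of $\gk_1$ (the Lie algebra of $Z_{K_1}$) and $\gk_2$ is the Lie algebra of $\widetilde{K_2}$, using $K_2 = \widetilde{K_2}^0$. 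Here one inclusion is automatic, since $\gz(\gk_1)$ commutes with $\gk_1$ and $\gk_2$ commutes with $\gk_1$ by condition (i) of \eqref{setup}; the content of the hypothesis is only the reverse inclusion.

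To finish, I observe that both summands lie in $\gk$: indeed $\gz(\gk_1) \subseteq \gk_1 \subseteq \gk$ and $\gk_2 \subseteq \gk$, so $\gz_\gg(\gk_1) = \gz(\gk_1) + \gk_2 \subseteq \gk$. Hence any fixed vector $v$ lies in $\gk$, while by construction $v \in \gm = \gk^\perp$; therefore $v \in \gk \cap \gm = 0$. Note that only the centralizer hypothesis on $K_1$ is used, the one on $K_2$ being superfluous for this lemma. I expect the only point that needs care is the passage from the group-level identity $Z_G(K_1) = Z_{K_1}\widetilde{K_2}$ to the Lie-algebra identity $\gz_\gg(\gk_1) = \gz(\gk_1)+\gk_2$, which rests on the connectedness of $K_1$ and on the Lie algebra of the center of a connected Lie group being the center of its Lie algebra; everything else is immediate from the orthogonality of $\gk$ and $\gm$.
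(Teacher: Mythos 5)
Your proof is correct, and it is worth comparing with the paper's, which consists of the single sentence ``The centralizer $Z_G(K)$ is finite by (\ref{setup}).'' That one\-/line argument would indeed give the lemma (finiteness of $Z_G(K)$ forces $\gz_\gg(\gk)=0$, hence no fixed vectors at all), but the premise is not actually true in all cases covered by (\ref{setup}): when $G/K$ is hermitian symmetric, $K=SK'$ with $S$ a circle central in $K$, so $S\subset Z_G(K)$ and the centralizer is infinite. What the hypotheses of (\ref{setup}) really give is the weaker but sufficient statement that you prove: $\gz_\gg(\gk)\subseteq\gz_\gg(\gk_1)=\gz(\gk_1)+\gk_2\subseteq\gk$, so any $\ad(\gk)$\-/fixed vector of $\gm$ lies in $\gk\cap\gk^\perp=0$ (the Killing form being definite since $G$ is compact semisimple). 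So your route is essentially the corrected and completed version of the paper's intended argument: same input (the centralizer conditions in (\ref{setup})), but you use only the condition on $K_1$, you supply the reduction from invariant vector fields to $\ad(\gk)|_\gm$\-/fixed vectors explicitly, and crucially you exploit that the fixed vector must lie in $\gm=\gk^\perp$ rather than claiming the full centralizer vanishes. This is the argument one should keep.
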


\begin{proof} The centralizer $Z_G(K)$ is finite by (\ref{setup}).
\end{proof}

Lemma \ref{no-nonzero} is of course obvious whenever $\rank K = \rank G$,
in other words when the Euler characteristic $\chi(M) \ne 0$.  The point
here is that it holds as well when $\rank K < \rank G$.
\medskip

Important examples of $M$ include the irreducible riemannian symmetric spaces
$G/K$ with $K$ not simple, the
irreducible nearly--Kaehler manifolds of $F_4$, $E_6$, $E_7$ or $E_8$, and
the very interesting $5$--symmetric space $E_8/A_4A_4$\,.  We will
list these examples in detail and work out the precise structure of the
group $I(\widetilde{M})$ of all isometries of $\widetilde{M}$. That
is Theorem \ref{isogroup}, and Corollary \ref{CK-fields} identifies
all the Killing vector fields on $\widetilde{M}$ of constant length.
Killing vector fields of constant length are the infinitesimal version of
isometries of constant displacement.  After that we come to the main
result, Theorem \ref{cw}, which identifies all the isometries of
constant displacement on $\widetilde{M}$.  Applying it to a riemannian
covering $\widetilde{M} \to \Gamma \backslash \widetilde{M}$ we prove
the Homogeneity Conjecture for isotropy--split manifolds.  Then we sketch
the mathematical background and current state for the Homogeneity
Conjecture.
\medskip

In Section \ref{sec2} we view (compact) isotropy--splitting fibrations from
the viewpoint of the Borel--de Siebenthal classification (\cite{BdS1949},
or see \cite{W1966}) of pairs $(G,K)$
where $G$ is a compact connected simply connected simple Lie group and
$K$ is a maximal subgroup of equal rank in $G$.  This yields an explicit 
list.  We then run through the cases where $G/K$ is a compact irreducible
riemannian symmetric space with $\rank K < \rank G$; the only ones that yield 
isotropy--splitting fibrations are the fibrations of real Stieffel manifolds
over odd dimensional oriented real Grassmann manifolds.  These are examples 
with which one can calculate explicitly, and to which our principal results
apply.
\medskip

In Section \ref{sec3} we work out the full group of isometries of 
$\widetilde{M}$.  The method combines ideas from \' Elie Cartan's description
of the full isometry group of a riemannian symmetric space, Carolyn Gordon's
work on isometry groups of noncompact homogeneous spaces, and a theorem
of Silvio Reggiani.  The result
is Theorem \ref{isogroup}.  One consequence, Corollary \ref{CK-fields},
is a complete description of the Killing vector fields of constant
length on $\widetilde{M}$.
\medskip

Section \ref{sec4} is a digression in which we show that an 
appropriate form of Theorem \ref{isogroup}
holds in the equal rank case without the need for an isotropy--splitting
fibration.
\medskip

In Section \ref{sec5} we study isometries of constant displacement on 
$\widetilde{M}$ in the equal rank case, in other words when the 
Euler--Poincar\' e characteristic $\chi(M) \ne 0$. In that setting we
give a classification of homogeneous riemannian 
coverings $\widetilde{M} \to \Gamma \backslash \widetilde{M}$.
The arguments are modeled in part on those of the group manifold case
of riemannian coverings $S \to \Gamma\backslash S$ in \cite{W1963}.
The result is Theorem \ref{cw}, which is the principal result of this paper.
The main application is Corollary \ref{cw-cover}, which applies
Theorem \ref{cw} to riemannian coverings 
$\widetilde{M} \to \Gamma \backslash \widetilde{M}$.
\medskip

In Section \ref{sec6}  we study isometries of constant displacement on
$\widetilde{M}$ when $\chi(M) = 0$.  We work out a modification of the
proof of Theorem \ref{cw}, proving Theorem \ref{cw-less},  which 
characterizes the isometries of constant displacement on $\widetilde{M}$
for $\chi(M) = 0$.
\medskip

In Section \ref{sec7} we specialize Theorem \ref{cw} to the case where
$M$ is a compact irreducible riemannian symmetric space.  From the
classification and the isotropy--splitting requirement, the only cases
are the natural fibrations of Stieffel manifolds over odd dimensional oriented
real Grassmann manifolds.  There we characterize the isometries of constant
on $\widetilde{M}$ by a matrix calculation.
\medskip

In Section \ref{sec8} we apply our results on constant displacement
isometries to the Homogeneity Conjecture.  The main result of this paper,
Theorem \ref{conj-isotropy--split}, proves the conjecture for $\widetilde{M}$
when $\rank K = \rank G$, and also when $M$ is a riemannian symmetric space.
In particular it proves the conjecture for $\widetilde{M}$ when
$\widetilde{M} \to M$ is one of the fibrations described in Section \ref{sec2}.
We then describe the current state of the art for the Homogeneity Conjecture, 
its infinitesimal variation, and its extension to Finsler manifolds.  
Earlier work had proved it in many special cases, for example for
riemannian symmetric spaces, and its validity for isotropy--split manifolds 
extends our understanding of the area.
\medskip

In Section \ref{sec9} we show how our results on compact isotropy--split 
manifolds carry over (or, rather, often do not carry over) 
to the noncompact case.  There we see that 
$\widetilde{M}$ is pseudo--riemannian and we can't talk about isometries
of constant displacement.  Thus, in that setting, we concentrate on the
isometry group and on Killing
vector fields of constant length.  Of special interest here is the
case where the base $M$ of the isotropy--splitting fibration 
$\widetilde{M} \to M$ is a riemannian symmetric space of noncompact type,
but other cases of special interest are those for which the ``compact dual''
isotropy--splitting fibration $\widetilde{M}^u \to M^u$ has $3$--symmetric 
or $5$--symmetric base..  

\section{Some Special Classes of Isotropy--Splitting Fibrations}
\label{sec2}
\setcounter{equation}{0}
In this section we describe a number of interesting examples of
isotropy--splitting fibrations $\widetilde{M} \to M$.  Those are examples with 
which one can calculate explicitly, and to which our principal results
apply.
\medskip

Fix a compact connected simply connected Lie group $G$ and a maximal 
connected subgroup $K$ with $\rank K = \rank G$.  The 
Borel -- de Siebenthal classification of all
such pairs $(G,K)$ is in \cite{BdS1949}, or see \cite{W1966}.
\medskip

We recall that classification.  We may assume that $G$ is simple.
Fix a maximal torus $T \subset K$ 
of $G$ and a positive root system $\Delta^+(\gg_\C,\gt_\C)$.  Express the
maximal root $\beta = \sum_{\psi \in \Psi_G} n_\psi \psi$ where $\Psi_G$ is
the simple root system for $\Delta^+(\gg_\C,\gt_\C)$.  The coefficients
$n_\psi$ are positive integers, and the possibilities for $\gk$ correspond
to the simple roots $\psi_0$ for which either $n_{\psi_0} = 1$ or 
$n_{\psi_0} > 1$ with $n_{\psi_0}$ prime.  Fix one such, $\psi_0$\,, 
and write $n_0$ for $n_{\psi_0}$\,.
\medskip

If $n_0 = 1$ the simple root system
$\Psi_K = \Psi_G \setminus \{\psi_0\}$.  This is the case where $G/K$ is an
hermitian symmetric space.  If $n_0 > 1$ then
$\Psi_K = (\Psi_G \setminus \{\psi_0\})\cup\{-\beta\}$.  In this case either
$n_0 = 2$ and $G/K$ is a non--hermitian symmetric space, or $n_0 = 3$
and $G/K$ is a nearly--Kaehler manifold, or $n_0 = 5$ and
$G/K = E_8/A_4A_4$\,.
\medskip

If $\cD_G$ is the Dynkin diagram of $\gg$
then the diagram $\cD_K$ of $\gk$ is obtained as follows.  If 
$n_0 = 1$ then delete the vertex $\psi_0$ from $\cD_G$.  If $n_0 > 1$
the delete the vertex $\psi_0$ and adjoin the vertex $-\beta$.  
The simple root(s) not orthogonal to $\beta$, in other words the
attachment points for $-\beta$ to $\cD_G$\,, may or may not
disconnect $\cD_G$\,.  If there is disconnection then $K$ splits into
the form $K_1K_2$ of interest to us.

\subsection{Hermitian Symmetric Space Base}\label{ssec2a}
If $n_0 = 1$ then $K = SK'$ where $S$ is a circle group and $K' = [K,K]$
is semisimple.  The corresponding fibrations are
$$
\begin{aligned}
&G/K' \to G/K \text{ circle bundle over a compact hermitian 
	symmetric space and }\\
&G/S \to G/K \text{ principal $K'$--bundle over a compact hermitian 
        symmetric space.}
\end{aligned}
$$
In addition, if $\gg = \gs\gu(s+t)$ we can have 
$\gk' = \gs\gu(s)\oplus\gs\gu(t)$,  leading to fibrations 
$$
\begin{aligned}
&SU(s+t)/SU(s) \to SU(s+t)/S(U(s)U(t)) \text{ and  }
	SU(s+t)/U(s) \to SU(s+t)/S(U(s)U(t)),\\
&SU(s+t)/SU(t) \to SU(s+t)/S(U(s)U(t)) \text{ and  }
	SU(s+t)/U(t) \to SU(s+t)/S(U(s)U(t)).
\end{aligned}
$$
\subsection{Quaternion--Kaehler Symmetric Space Base} \label{ssec2b}

If $n_0 = 2$ then $K$ is simple except in the cases
$$
\begin{aligned}
& G/K = SO(s+t)/SO(s)SO(t) \text{ with } 2 < s \leqq t
	\text{ and } st \text{ even, }\\
& G/K = Sp(s+t)/Sp(s)Sp(t) \text{ with } 1 \leqq s \leqq t, \\
& G/K = G_2/A_1A_1\,, F_4/A_1C_3\,, E_6/A_1A_5\,, E_7/A_1D_6 \text{ or }
	E_8/A_1E_7\,.
\end{aligned}
$$
In the $SO$ cases, $G/K$ is a quaternion---Kaehler symmetric space for $s = 3$ 
and for $s = 4$.  In the $Sp$ cases $G/K$ is a quaternion--Kaehler symmetric 
space for $s =1$.  In the exceptional group cases $G/K$ always is a 
quaternion--Kaehler symmetric space.

\subsection{Nearly-Kaehler $3$--Symmetric Space Base}\label{ssec2c}
If $n_0 = 3$ then either $K$ is simple and $G/K = G_2/A_2$ or $E_8/A_8$\,, 
or $K$ is not simple and $G/K$ is one of the nearly--Kaehler manifolds
$F_4/A_2A_2$\,, $E_6/A_2A_2A_2$\,, $E_7/A_2A_5$ or $E_8/A_2E_6$\,.
In the $F_4$ case one of the $A_2$ is given by long roots and the other is
given by short roots.  In each case we have $\gk = \ga_2 \oplus \gk''$ where
the $\ga_2$ is given by long roots.  The $3$--symmetry on $G/K$ is given
by one of the central elements of $\exp(\ga_2) = SU(3)$.  It defines the
almost--complex structure on $G/K$, which satisfies the nearly--Kaehler
condition.  The corresponding fibrations are
$$
G/K'' \to G/K \text{ principal } SU(3)\text{--bundle and }
G/SU(3) \to G/K \text{ principal } K''\text{--bundle.}
$$

\subsection{$5$--Symmetric Space Base}\label{ssec2d}
If $n_0 = 5$ then $G/K = E_8/A_4A_4$\,, where the first $A_4$ acts on the 
complxified tangent space by a sum of $5$ dimensional representations  and
the second $A_4$ acts by a sum of $10$ dimensional representations.  This
leads to two different principal $SU(5)$--bundles 
$E_8/SU(5) \to E_8/SU(5)SU(5)$.

\subsection{Odd Real Grassmann Manifold Base}\label{ssec2e}
The Borel -- de Sibenthal classification, just described, gives
the classification of irreducible compact riemannian symmetric spaces $S$
with Euler characteristic $\chi(S) \ne 0$.  There are other symmetric
spaces to which our results will apply, corresponding to the isotropy--split
fibrations $\pi: \widetilde{M} \to M$ where the base $M$ is an irreducible 
compact riemannian symmetric space $G/K$ such that $\rank G > \rank K$.
According to the classification of symmetric spaces, the only such $G/K$ are
{\footnotesize
$$
SU(n)/SO(n), SU(2n)/Sp(n),
SO(2s+2t+2)/[SO(2s + 1)\times SO(2t+1)], E_6/F_4, E_6/Sp(4), (K\times K)/diag(K).
$$
}
\hskip -4pt Note that $SU(4)/SO(4) = SO(6)/[SO(3) \times SO(3)]$.  Thus the only such
symmetric spaces $G/K$ that satisfy (\ref{setup}) are the oriented
real Grassmann manifolds $SO(2s+2t+2)/[SO(2s + 1)\times SO(2t+1)]$.  Thus
the corresponding fibrations are
$$
\begin{aligned}
\pi: \widetilde{M} \to M &\text{ given by } G/K_1 \to G/K_1K_2 \text{ where }\\
     &G = SO(2s+2t+2)/SO(2s+1), K_1 = SO(2s + 1) \text{ and } K_2 =  SO(2t+1).
\end{aligned}
$$
The odd spheres are completely understood (\cite{W1961} and \cite{W1966}),
and in any case they do not lead to isotropy--split fibrations, so
we put those cases aside and assume $s, t > 0$.

\section{The Isometry Group of $\widetilde{M}$}
\label{sec3}
\setcounter{equation}{0}
We look at an isotropy--splitting  fibration $\pi: \widetilde{M} \to M$, 
given by $G/K_1 \to G/K$ in (\ref{setup}).  As noted there we 
assume that the metrics on $M$ and $\widetilde{M}$ are the
normal riemannian metrics defined by the negative of the Killing form of $G$.  
Now we work out the isometry groups $I(\widetilde{M})$.  

\begin{lemma} The right action of $\widetilde{K_2}$ on $\widetilde{M}$, given by
$r(k_2)(gK_1) = gK_1 k_2^{-1} = gk_2^{-1}K_1$\,, is a well defined
action by isometries.  The fiber of $\pi: \widetilde{M} \to M$
through $gK_1$ is $r(K_2)(gK_1)$.
\end{lemma}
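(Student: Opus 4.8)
The plan is to verify three things in turn: that the map $r(k_2)$ is well defined on cosets $gK_1$, that it acts by isometries of the normal metric, and that the orbits of $r(\widetilde{K_2})$ are exactly the fibers of $\pi$. First I would check well-definedness. If $g' K_1 = g K_1$, then $g' = g k_1$ for some $k_1 \in K_1$, so $g' k_2^{-1} K_1 = g k_1 k_2^{-1} K_1$. Since $k_1 \in K_1$ and $k_2 \in \widetilde{K_2}$, and these commute — because $\widetilde{K_2} \subseteq Z_G(K_1)$ by the centralizer hypothesis in (\ref{setup}) — we have $k_1 k_2^{-1} = k_2^{-1} k_1$, hence $g k_1 k_2^{-1} K_1 = g k_2^{-1} k_1 K_1 = g k_2^{-1} K_1$. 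Thus the value is independent of the coset representative, and the displayed identity $g K_1 k_2^{-1} = g k_2^{-1} K_1$ is precisely this commutativity observation. The same commutation shows $r(k_2) r(k_2') = r(k_2 k_2')$, so $r$ is a genuine right action.

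Next I would argue that each $r(k_2)$ is an isometry. Here the key point is that right multiplication by any fixed element of $G$ on $G/K_1$, when the metric is the one induced by a bi-invariant (in particular $\Ad$-invariant) form, interacts with left translations by $G$. The cleanest route is to note that the normal metric on $\widetilde{M} = G/K_1$ comes from the negative of the Killing form $B$, which is $\Ad(G)$-invariant; the left $G$-action is by isometries by construction, and I want $r(k_2)$ to be an isometry as well. Since $r(k_2)(gK_1) = g k_2^{-1} K_1$, one sees that $r(k_2)$ commutes with the left $G$-action and is induced on the homogeneous space by right translation $R_{k_2^{-1}}$ on $G$. Because $k_2^{-1}$ normalizes $K_1$ (indeed centralizes it), $R_{k_2^{-1}}$ descends to $G/K_1$, and its differential at each point is an $\Ad$-conjugate of the identity on the horizontal space $\gk_1^\perp$, which preserves $B$. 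Thus $r(k_2)$ preserves the normal metric.

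Finally, the fiber statement: the fiber of $\pi: G/K_1 \to G/K$ over $\pi(gK_1) = gK$ consists of all cosets $g' K_1$ with $g' K = g K$, i.e. $g' = g k$ with $k \in K$. Writing $k = k_1 k_2$ with $k_1 \in K_1$, $k_2 \in K_2$ (possible by (i) of (\ref{setup})), we get $g' K_1 = g k_1 k_2 K_1 = g k_2 k_1 K_1 = g k_2 K_1$, using again that $k_1$ and $k_2$ commute. Hence every point of the fiber has the form $g k_2 K_1 = r(k_2^{-1})(gK_1)$, so the fiber is contained in the $r(\widetilde{K_2})$-orbit of $gK_1$; the reverse inclusion is immediate since $r(k_2)$ preserves $\pi$. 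I expect the main obstacle to be the isometry claim rather than the bookkeeping: one must be careful that right multiplication genuinely descends to the quotient and acts isometrically, which hinges essentially on $\widetilde{K_2}$ centralizing $K_1$ and on the $\Ad$-invariance of the Killing form. The well-definedness and fiber identifications are then routine consequences of the commutativity $[\gk_1, \gk_2] = 0$ built into the isotropy-splitting hypotheses.
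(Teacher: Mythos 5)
Your argument is correct, and it is exactly the routine verification the paper has in mind --- indeed the paper states this lemma without proof, so there is nothing to compare against beyond noting that all three steps (well-definedness, isometry, fiber identification) do hinge, as you say, on $\widetilde{K_2}\subseteq Z_G(K_1)$ from (\ref{setup}) and on the $\Ad$-invariance of the Killing form. One small phrasing quibble: the differential of $r(k_2)$ at $gK_1$, in the usual identification of each tangent space with $\gk_1^\perp$, is $\Ad(k_2)|_{\gk_1^\perp}$ rather than ``an $\Ad$-conjugate of the identity,'' but this is what your argument actually uses and it preserves the Killing form, so the conclusion stands.
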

\vskip -10pt
\begin{equation}
\text{Let } F \text{ denote the fiber } r(K_2)(1K_1) \text{ of }
\pi: \widetilde{M} \to M, \text{ so } gF \text{ is the fiber }
\pi^{-1}(gK).
\end{equation}
We have larger (than $G$) transitive
groups of isometries of $\widetilde{M}$ given by
\begin{equation}\label{g-tilde}
\widetilde{G} = G \times r(\widetilde{K_2}) \text{ and }
\widetilde{G}^0 = G \times r(K_2) \text{ acting by }
        (g,r(k_2)):xK_1 \mapsto g(xK_1)k_2^{-1} = gxk_2^{-1}K_1\,.
\end{equation}
Every $\widetilde{g} = (g,r(k_2)) \in \widetilde{G}$ sends fiber to fiber
in $\widetilde{M} \to M$ and induces the isometry $g: M \to M$ of $M$.
\medskip

Specializing a theorem of Reggiani \cite[Corollary 1.3]{R2010} we have

\begin{theorem}\label{ident-comp}
Suppose that the riemannian manifold $\widetilde{M} = G/K_1$ is irreducible.
Then $\widetilde{G}^0$ is the identity component $I^0(\widetilde{M})$ of 
its isometry group.
\end{theorem}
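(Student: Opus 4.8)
The plan is to obtain Theorem \ref{ident-comp} as a direct specialization of Reggiani's Corollary 1.3 in \cite{R2010}, which determines the identity component of the isometry group of an irreducible normal homogeneous space presented with a bi--invariant background metric. So the first task is simply to check that $\widetilde{M} = G/K_1$ meets the hypotheses of that corollary. By (\ref{setup}), $G$ is compact and connected, the metric on $\widetilde{M}$ is the normal metric defined by $-B_\gg$, and by assumption $\widetilde{M}$ is irreducible as a riemannian manifold. I would also record that $G$ acts almost effectively on $G/K_1$, the kernel being the largest normal subgroup of $G$ contained in $K_1$; in the cases of interest $G$ is simple, so this kernel is finite and central. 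With these checks in place, Reggiani's corollary identifies $I^0(\widetilde{M})$ as the group generated by the left translations $G$ together with the right translations by the identity component of the normalizer of $K_1$ in $G$.

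The second task is to match that right--translation part with $r(K_2)$. Right translation $r(n): gK_1 \mapsto gn^{-1}K_1$ is a well defined isometry of $\widetilde{M}$ exactly when $n \in N_G(K_1)$, and its identity component satisfies $N_G(K_1)^0 = K_1\cdot Z_G(K_1)^0$: indeed $N_G(K_1)^0$ is connected, so it acts on $\gk_1$ through $\Int(\gk_1)$, and as $K_1 \to \Int(\gk_1)$ is onto, every $n \in N_G(K_1)^0$ differs from an element of $K_1$ by an element centralizing $\gk_1$, hence centralizing the connected group $K_1$. Now (\ref{setup}) gives $Z_G(K_1) = Z_{K_1}\widetilde{K_2}$ with $K_2 = \widetilde{K_2}^0$, so $Z_G(K_1)^0 = (Z_{K_1})^0K_2$. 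The factor $(Z_{K_1})^0$ lies in $K_1$, and the right action of $K_1$ on $G/K_1$ is trivial because $r(k_1)(gK_1) = gk_1^{-1}K_1 = gK_1$. Hence the effective image on $\widetilde{M}$ of the right translations by $N_G(K_1)^0$ is precisely $r(K_2)$, and combining with the previous paragraph yields $I^0(\widetilde{M}) = G\cdot r(K_2) = \widetilde{G}^0$ as in (\ref{g-tilde}).

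The step I expect to be the main obstacle is not the centralizer bookkeeping but the clean matching with the scope of Reggiani's corollary, in particular making sure that no \emph{larger} isometry group sneaks in from an accidental symmetric--space or round--sphere structure on the total space $\widetilde{M}$. The relevant point is that $\widetilde{M}$ carries the \emph{normal} metric from $-B_\gg$, not an arbitrary homogeneous metric: presentations such as $SU(s+1)/SU(s)$ are diffeomorphic to spheres but, with the Killing--form metric, are Berger--type spheres whose isometry group is exactly the expected $\widetilde{G}^0$ rather than the larger orthogonal group of the round metric. Since $\dim \gk_2 \ne 0$ forces the fiber $F$ of $\pi:\widetilde{M}\to M$ to be positive dimensional, $\widetilde{M}$ fibers nontrivially over $M = G/K$, and together with irreducibility and the nondegeneracy recorded in Lemma \ref{no-nonzero} (no nonzero $G$--invariant vector field on $M$) this keeps $G/K_1$ within the generic case covered by Corollary 1.3. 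Confirming that these hypotheses are met, rather than any new estimate, is where the real care is required.
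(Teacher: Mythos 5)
Your proposal is correct and follows the same route as the paper, which simply states the theorem as a specialization of Reggiani's Corollary 1.3 without further argument. Your additional bookkeeping — identifying the effective image of right translations by $N_G(K_1)^0$ with $r(K_2)$ via the hypothesis $Z_G(K_1) = Z_{K_1}\widetilde{K_2}$ in (\ref{setup}) — is exactly the verification the paper leaves implicit, and it is carried out correctly.
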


\begin{corollary}\label{CK-fields}
Suppose that the riemannian manifold $\widetilde{M} = G/K_1$ is irreducible.
\begin{itemize}
\item[{\rm (1)}] The algebra of all Killing vector fields on $\widetilde{M}$ is
$\widetilde{\gg} = \gg \oplus dr(\gk_2)$.  
\item[{\rm (2)}] The set of all constant length 
Killing vector fields on $\widetilde{M}$ is $\gl \oplus dr(\gk_2)$ where \\
\phantom{XXXXXXXXXX}
$\gl = \{\xi \in \gg \mid \xi \text{ defines a constant length Killing
vector field on } \widetilde{M}\}.$  
\item[{\rm (3)}] 
$\gl = \{\xi \in \gg \mid \xi \text{ defines a constant length Killing
vector field on } M \}.$  
\item[{\rm (4)}]  If $\rank K = \rank G$ then $\gl = 0$, 
so the set of all constant length Killing vector fields on $\widetilde{M}$ is
$dr(\gk_2)$.  That applies in particular to the special classes of 
Sections \ref{ssec2a} through \ref{ssec2d}.
\end{itemize}
\end{corollary}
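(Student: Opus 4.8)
The plan is to derive Corollary \ref{CK-fields} as a direct consequence of Theorem \ref{ident-comp}, together with the fibration structure and Lemma \ref{no-nonzero}. For part (1), since Theorem \ref{ident-comp} identifies $I^0(\widetilde{M}) = \widetilde{G}^0 = G \times r(K_2)$, the Lie algebra of the full isometry group is the Lie algebra of this identity component. I would observe that the Killing vector fields are exactly the infinitesimal generators of $I^0(\widetilde{M})$, so they form $\widetilde{\gg} = \gg \oplus dr(\gk_2)$, where $\gg$ generates the left $G$-action and $dr(\gk_2)$ generates the right $r(K_2)$-action. The directness of the sum should follow from the effectiveness of the $\widetilde{G}^0$-action together with the centralizer conditions in (\ref{setup}); I would note that $r(\widetilde{K_2})$ and $G$ meet only in the part of $G$ that acts as right translation by the center, which the hypothesis $Z_G(K_2) = Z_{K_2}\widetilde{K_1}$ controls.

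For part (2), I would argue that since the two summands $\gg$ and $dr(\gk_2)$ come from commuting group actions (the left $G$-action commutes with the right $r(K_2)$-action), they are orthogonal with respect to the bi-invariant structure. The key point is that every element of $dr(\gk_2)$ already defines a constant length Killing field: since $r(K_2)$ acts by right translations along the fibers $F = r(K_2)(1K_1)$ and the metric is the normal metric from the Killing form, the fibers are totally geodesic and each $r(k_2)$ moves points a fixed distance along them. I would then show that a general $\xi + dr(\eta)$ (with $\xi \in \gg$, $\eta \in \gk_2$) has constant length if and only if $\xi$ alone does, using orthogonality of the two components and the fact that the $dr(\gk_2)$ part has constant length independently; this isolates the $\gg$-component and yields $\gl \oplus dr(\gk_2)$.

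For part (3), I would use the projection $\pi: \widetilde{M} \to M$, which is a riemannian submersion intertwining the $G$-action on $\widetilde{M}$ with the $G$-action on $M$. A field $\xi \in \gg$ is $\pi$-related to its image field on $M$, and because $\pi$ is a submersion with totally geodesic fibers of constant fiber-length, the length of $\xi$ at $gK_1$ decomposes into its horizontal part (which matches the length of the projected field on $M$) and a vertical part. The content is to verify that for $\xi \in \gg$ the vertical component is itself constant along $\widetilde{M}$, so that constant length upstairs is equivalent to constant length downstairs; here I would invoke Lemma \ref{no-nonzero} to rule out a nonzero $G$-invariant vertical contribution that could otherwise decouple the two conditions. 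Part (4) is then immediate: when $\rank K = \rank G$, the symmetric (or more generally normal homogeneous) space $M = G/K$ has no nonzero constant length Killing field, since $\chi(M) \neq 0$ forces every Killing field to have a zero, whence $\gl = 0$ and only $dr(\gk_2)$ survives.

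The main obstacle I anticipate is part (2), specifically the clean separation of the constant-length condition across the direct sum. A priori the length of $\xi + dr(\eta)$ is a sum of the squared lengths of the two components plus a cross term, and constancy could in principle hold through cancellation rather than through each component being individually constant. The crux is therefore to show the cross term vanishes identically, which I expect to follow from the orthogonality $\gk_2 \perp \gk_1$ in (\ref{setup}) together with the commuting of the left and right actions, so that the mixed inner product $\langle \xi, dr(\eta)\rangle$ is $G$-invariant and hence either forced to zero by Lemma \ref{no-nonzero} or constant in a way that still decouples the two length conditions. Establishing this rigorously, rather than the group-theoretic bookkeeping in parts (1), (3) and (4), is where the real work lies.
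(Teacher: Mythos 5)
Your overall route---part (1) from Theorem \ref{ident-comp}, constancy of the $dr(\gk_2)$ fields because they are invariant under the transitive group $G$, a decomposition $\xi=\xi'+\xi''$ with $\xi'\in\gg$ and $\xi''\in dr(\gk_2)$ for parts (2) and (3), and the Euler characteristic argument for part (4)---is the same skeleton the paper uses. The gap is in your treatment of what you yourself identify as the crux, the cross term $\langle \xi',\xi''\rangle$. You propose that this function on $\widetilde{M}$ is $G$--invariant and hence controlled by Lemma \ref{no-nonzero}. That is not so. The field $dr(\eta)$ is indeed $G$--invariant, since the left and right actions commute, but the Killing field generated by $\xi'\in\gg$ through the left action is not: an isometry $h\in G$ carries it to the field generated by $\Ad(h)\xi'$. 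Computing in the left trivialization $T_{gK_1}\widetilde{M}\cong\gk_1^\perp$, the cross term at $gK_1$ is $-\langle(\Ad(g^{-1})\xi')_{\gk_1^\perp},\eta\rangle=-\langle\Ad(g^{-1})\xi',\eta\rangle$, which for $\xi'=\eta\in\gk_2$ equals $-\|\eta\|^2$ at the base point and genuinely varies with $g$; if it were constant then $\Ad(G)\eta$ would be confined to a proper affine subspace of $\gg$, impossible when $\gg$ is simple. Moreover Lemma \ref{no-nonzero} concerns $G$--invariant vector fields on $M$, not $G$--invariant functions, so it cannot be invoked to kill the cross term even if the invariance held.

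The paper resolves this point differently: its proof rests on the assertion that $\xi'\perp\xi''$ at every point of $\widetilde{M}$, so that $\|\xi\|^2=\|\xi'\|^2+\|\xi''\|^2$ pointwise and constancy of $\|\xi\|$ and $\|\xi''\|$ forces constancy of $\|\xi'\|$; this is a pointwise geometric claim about the two summands, not a claim of $G$--invariance of their inner product. Your part (3) has a related unsupported step: to pass between constant length of $\xi'$ on $\widetilde{M}$ and on $M$ you need the vertical component $(\Ad(g^{-1})\xi')_{\gk_2}$ to have constant norm, and again Lemma \ref{no-nonzero} does not supply that. To make your argument work you must actually establish control of the cross term and of the vertical part of $\xi'$---for example by restricting to the totally geodesic fibers as in Lemma \ref{go-space}, or by an averaging argument over the fiber $r(K_2)gK_1$---rather than appealing to an invariance that fails.
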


\begin{proof} The first assertion is immediate from Theorem \ref{ident-comp}.
For the second assertion, $dr(\gk_2)$ consists of Killing vector fields of
constant length on $\widetilde{M}$ because every $\xi \in dr(\gk_2)$ is
centralized by the transitive isometry group $G$.  
\smallskip

For the third assertion, let $\xi$ be a Killing vector field of 
constant length on $\widetilde{M}$.  Using Theorem \ref{ident-comp}
express $\xi = \xi' + \xi''$ where $\xi' \in \gg$ and $\xi'' \in dr(\gk_2)$.
The fibers of
$\pi: \widetilde{M} \to M$ are just the orbits of $r(K_2)$ and are group
manifolds, so $\xi''$ is a Killing vector field of constant length on
$\widetilde{M}$.  Further,
$\xi'\perp \xi''$ at every point of $\widetilde{M}$.  Now $\xi'$
is a Killing vector field of constant length on $\widetilde{M}$.  It
follows that $\xi'$ is a Killing vector field of constant length on $M$
as well.  
\smallskip

For the fourth assertion, let $\rank K = \rank G$, so the Euler--Poincar\' e 
characteristic $\chi(M) > 0$.  Then the vector field $\xi'$ (of the
argument for (3) just above) must have a zero on $M$.  Thus
$\xi' = 0$ and $\xi = \xi'' \in dr(\gk_2)$.
\end{proof}

\begin{corollary}\label{ck2}
Every isometry of $\widetilde{M}$ normalizes $r(K_2)$ and thus sends fiber 
to fiber in $\pi:\widetilde{M} \to M$.
\end{corollary}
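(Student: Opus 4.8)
The plan is to show that every isometry $\varphi$ of $\widetilde{M}$ carries the subgroup $r(K_2)$ to itself; since the fibers of $\pi$ are exactly the orbits of $r(K_2)$, this yields the fiber-to-fiber assertion. First I would use that the identity component $I^0(\widetilde{M}) = \widetilde{G}^0 = G\times r(K_2)$ (Theorem \ref{ident-comp}) is normal in the full isometry group, so conjugation by $\varphi$ restricts to an automorphism of $I^0(\widetilde{M})$ and hence induces a Lie algebra automorphism $\theta$ of $\widetilde{\gg} = \gg\oplus dr(\gk_2)$. Because $r(K_2)$ is connected, it suffices to prove that $\theta$ preserves the subalgebra $dr(\gk_2)$: then $\varphi\, r(K_2)\,\varphi^{-1}$ is the connected subgroup of $I^0(\widetilde{M})$ with Lie algebra $\theta(dr(\gk_2)) = dr(\gk_2)$, which is $r(K_2)$ itself.

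The structural input is Corollary \ref{CK-fields}: the constant length Killing fields form the set $\cC = \gl\oplus dr(\gk_2)$, and an isometry sends constant length fields to constant length fields, so $\theta(\cC) = \cC$. The cleanest route is to observe that in all the cases at hand $\gl = 0$, whence $\cC = dr(\gk_2)$ and $\theta(dr(\gk_2)) = dr(\gk_2)$ follows at once. Indeed, when $\rank K = \rank G$ we have $\gl = 0$ by Corollary \ref{CK-fields}(4), which covers the special classes of Sections \ref{ssec2a}--\ref{ssec2d}; and when $M$ is a symmetric space, the isotropy splitting (\ref{setup}) leaves only the odd Grassmannians of Section \ref{ssec2e}, for which $\gl = 0$ as well. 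To handle the general statement abstractly, without invoking the classification, I would argue instead from the ideal structure: $dr(\gk_2)$ is an ideal of $\widetilde{\gg}$ (it commutes with $\gg$, since $\widetilde{G}^0$ is a direct product), and $\gg$ is semisimple because $G$ is compact, connected and simply connected, so the center of $\widetilde{\gg}$ coincides with the center of $dr(\gk_2)$ and lies inside $dr(\gk_2)$. Decomposing $\widetilde{\gg}$ into its center together with its simple ideals, the automorphism $\theta$ fixes the center setwise and permutes the simple ideals; each simple ideal of $dr(\gk_2)$ lies in $\cC$, so its $\theta$-image is a simple ideal again lying in $\cC$. Thus $\theta$ preserves $dr(\gk_2)$ provided the only simple ideals of $\widetilde{\gg}$ contained in $\cC$ are those of $dr(\gk_2)$, that is, provided no simple factor $\gg_j$ of $\gg$ satisfies $\gg_j\subseteq\gl$.

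The main obstacle is precisely this last exclusion. I would proceed as follows: if $\gg_j\subseteq\gl$, then every nonzero $\xi\in\gg_j$ is a constant length Killing field on $M$ and hence vanishes nowhere; since $\xi$ vanishes at $gK$ exactly when $\Ad(g^{-1})\xi\in\gk$, and since $\gg_j$ is an ideal (so $\Ad(g)\gg_j = \gg_j$), this forces $\gg_j\cap\gk = 0$. In the equal rank case this is impossible, since a Cartan subalgebra of $\gg$ then lies in $\gk$ and meets the ideal $\gg_j$ nontrivially; this is the abstract form of Corollary \ref{CK-fields}(4). The delicate point, which I expect to require genuine work rather than the equal rank shortcut, is to rule out a full simple ideal of constant length fields under (\ref{setup}) in general: the reduction to $\gg_j\cap\gk = 0$, combined with the finiteness of $Z_G(K)$ and the splitting $\gk = \gk_1\oplus\gk_2$ with $\dim\gk_i \ne 0$, is the natural mechanism for the contradiction, and it is exactly the splitting hypothesis (\ref{setup})(iii) that excludes the odd spheres and the group manifolds $(K\times K)/\mathrm{diag}(K)$ where such fields do occur. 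Once $\gl$ contains no simple factor, $\theta$ preserves $dr(\gk_2)$, so $\varphi$ normalizes $r(K_2)$ and therefore maps fibers to fibers in $\pi:\widetilde{M}\to M$.
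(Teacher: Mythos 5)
Your argument is correct, and it is essentially the argument the paper intends: Corollary \ref{ck2} is stated without proof, as an immediate consequence of Theorem \ref{ident-comp} and Corollary \ref{CK-fields}, and the only available mechanism is the one you use --- conjugation by an isometry preserves $I^0(\widetilde{M})=G\times r(K_2)$, hence induces an automorphism of $\widetilde{\gg}=\gg\oplus dr(\gk_2)$ that preserves the set $\gl\oplus dr(\gk_2)$ of constant length Killing fields, and when $\gl=0$ this characterizes $dr(\gk_2)$ intrinsically, so its orbits (the fibers) are permuted. This is exactly how the paper argues the pseudo--riemannian analogue, Corollary \ref{fiber-inv-ps}. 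Where you go beyond the paper is in making explicit what is needed when $\gl\ne 0$: that no simple ideal $\gg_j$ of $\gg$ lies inside $\gl$, which you settle in the equal rank case and correctly reduce to $\gg_j\cap\gk=0$ in general. You are right that this residual case is not closed, but that is a gap in the paper's unconditional statement rather than a defect of your treatment: the paper only ever invokes Corollary \ref{ck2} when $\rank K=\rank G$ (Lemmas \ref{inn-out} and \ref{isotropy}) or for the odd Grassmannian fibrations of Section \ref{sec7} (where Proposition \ref{iso-st} asserts $\gl=0$ directly), and in both situations your proof is complete. One small caveat: your step ``nonzero in $\gg_j\subseteq\gl$ implies nowhere vanishing on $M$'' tacitly assumes $\gg_j\not\subseteq\gk$, i.e.\ that the ideal $\gg_j$ acts nontrivially on $M$; this is harmless but worth stating, since (\ref{setup}) gives finiteness of $Z_G(K)$ (Lemma \ref{no-nonzero}) rather than effectiveness outright.
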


Now we start to extend this to a structure theorem for the full isometry group
$I(\widetilde{M})$ under the constraint of (\ref{setup}).  

The normalizer of $K_1$ in $G$ also normalizes the centralizer of $K_1$,
thus normalizes $K_2$ and thus normalizes $K$.  That shows
\begin{lemma}\label{normalizer}
The normalizer of $K_1$ in $G$ is contained in the normalizer of $K$ in $G$.
\end{lemma}

Now we follow the basic idea of \' E. Cartan's determination of the 
holonomy group and then the isometry group of a riemannian 
symmetric space (\cite{C1927a}, \cite{C1927b}; or see \cite{W1966}).
Write $\Out(G)$ for the quotient $\Aut(G)/\Int(G)$ of the automorphism 
group by the normal subgroup of inner automorphism, and similarly
$\Out(K_1) = \Aut(K_1)/\Int(K_1)$.  We also need the relative group
\begin{equation}\label{outer}
\Out(G,K_1) = \bigl \{\alpha \in \Aut(G) \mid \alpha(K_1) = K_1\}/
		\{\alpha \in \Int(G) \mid \alpha(K_1) = K_1\}
		\subset \Out(G,K).
\end{equation}
The inclusion in (\ref{outer}) follows because $K_1$ is a local
direct factor of $K$.
In many cases $\Out(G,K_1) = \Out(G,K)$ because $\gk_2$ is the 
$\gg$--centralizer of $\gk_1$ and $\gk_1 \not \cong
\gk_2$.  But there are exceptions, such as orthocomplementation (which
exchanges the two factors of $K$) in the cases of Stieffel manifold fibrations 
$$
\begin{aligned}
&SO(2k)/SO(k) \to SO(2k)/[SO(k)\times SO(k)], \\
&SU(2k)/U(k) \to SU(2k)/S(U(k) \times U(k)) \text{ and } \\
&Sp(2k)/Sp(k) \to Sp(2k)/[Sp(k)\times Sp(k)].
\end{aligned}
$$ 
There are other exceptions, including $E_6/[A_2A_2A_2]$, but neither 
$F_4/A_2A_2$ nor $E_8/A_4A_4$ is an exception.

\begin{lemma}\label{inn-out}
Suppose that $\rank K = \rank G$.  Let $\alpha \in \Aut(G)$ preserve $K_1$
$($and thus also $K_2$ so $\alpha(K) = K)$.  Then the following conditions
are equivalent: 
{\rm (i)} $\alpha|_K$ is an inner automorphism of $K$, 
{\rm (ii)} as an isometry, $\alpha \in I^0(M)$, and 
{\rm (iii)} as an isometry, $\alpha \in I^0(\widetilde{M})$.
\end{lemma}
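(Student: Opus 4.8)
The plan is to prove that each of (i), (ii), (iii) is equivalent to the single condition $(\star)$ that $\alpha = \Int(k)$ for some $k \in K$. Recall first that an isometry of a connected riemannian manifold is determined by its value and differential at one point. The automorphism $\alpha$ induces the isometries $\bar\alpha\colon gK \mapsto \alpha(g)K$ of $M$, fixing $o = 1K$, and $\widetilde\alpha\colon gK_1 \mapsto \alpha(g)K_1$ of $\widetilde M$, fixing $\widetilde o = 1K_1$; both are isometric because $\alpha$ preserves the Killing form and hence the normal metrics. Writing $\alpha_* = d\alpha_1$ for the induced automorphism of $\gg$, which preserves $\gk_1$, $\gk_2$, $\gk$ and $\gm = \gk^\perp$, one computes the differentials at the fixed points to be $d\bar\alpha_o = \alpha_*|_\gm$ and $d\widetilde\alpha_{\widetilde o} = \alpha_*|_{\gk_2 \oplus \gm}$.

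The technical core is the rigidity statement $(\dagger)$: if $\alpha_*|_\gm = \Ad(k)|_\gm$ for some $k \in K$, then in fact $\alpha = \Int(k)$. To see this, observe that the subalgebra of $\gg$ generated by $\gm$ is $\ad(\gk)$--invariant, hence an ideal, hence (as $\gg$ is simple and $\gm \ne 0$) all of $\gg$; so $\gm$ generates $\gg$. Then $\Ad(k)^{-1}\alpha_*$ is an automorphism of $\gg$ fixing $\gm$ pointwise, so it fixes $\gg$ pointwise, giving $\alpha_* = \Ad(k)$ and, since $G$ is simply connected, $\alpha = \Int(k)$. Granting this, the implications $(\star) \Rightarrow$ (i), (ii), (iii) are immediate: writing $k = k_1 k_2$ with $k_i \in K_i$, one checks directly from $\alpha(g) = kgk^{-1}$ that $\bar\alpha = \ell(k) \in G \subseteq I^0(M)$ and $\widetilde\alpha = (k, r(k_2)) \in G \times r(K_2) = I^0(\widetilde M)$, using Theorem \ref{ident-comp} for the last identification.

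It then remains to prove (i), (ii), (iii) $\Rightarrow (\star)$. For (iii) $\Rightarrow (\star)$, Theorem \ref{ident-comp} lets me write $\widetilde\alpha = (g, r(k_2))$; evaluating at $\widetilde o$ forces $g k_2^{-1} \in K_1$, so $g \in K$, and comparing differentials at $\widetilde o$ gives $\alpha_*|_{\gk_2 \oplus \gm} = \Ad(g)|_{\gk_2 \oplus \gm}$, so in particular $\alpha_*|_\gm = \Ad(g)|_\gm$ and $(\dagger)$ applies. For (i) $\Rightarrow (\star)$ --- the step that genuinely uses $\rank K = \rank G$ --- I first compose $\alpha$ with $\Int(k_0^{-1})$ ($k_0 \in K$, which preserves $K_1$ and $K_2$) to reduce to $\alpha|_K = \mathrm{id}$, i.e. $\alpha_*|_\gk = \mathrm{id}$. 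Fixing a maximal torus $T \subseteq K$, equal rank makes it maximal in $G$, so $\alpha_*$ fixes $\gt$ pointwise; hence $\alpha_*$ preserves every root space and multiplies it by a scalar. These scalars define a character of the root lattice of modulus one, necessarily of the form $\psi \mapsto \psi(t)$ for some $t \in T$, so $\alpha_* = \Ad(t)$ with $t \in T \subseteq K$, which is $(\star)$.

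Finally, (ii) $\Rightarrow (\star)$ is the step I expect to be the main obstacle, because it requires knowing that $M$ admits no isometries beyond those coming from $G$. Since the base $M = G/K$ is irreducible in all the cases at hand, I would apply the theorem of Reggiani (the one invoked for $\widetilde M$ in Theorem \ref{ident-comp}) to $M$ to conclude $I^0(M) = \{\ell(g) : g \in G\}$, so that its isotropy at $o$ is $\{\ell(k) : k \in K\}$; then (ii) forces $\bar\alpha = \ell(k)$ for some $k \in K$, whence $\alpha_*|_\gm = \Ad(k)|_\gm$ and $(\dagger)$ closes the argument. Alternatively, since $M$ is simply connected the isotropy group $I^0(M)_o$ is connected, and one can argue directly --- using Lemma \ref{no-nonzero} to exclude extra infinitesimal isometries --- that it has the same linear isotropy representation $\Ad(K)|_\gm$ as $G$. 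Pinning down $I^0(M)$ is the one place where global structure of the base, rather than Lie--algebra bookkeeping, is required.
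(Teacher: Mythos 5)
Your proof is correct, and its skeleton matches the paper's: the implication (i) $\Rightarrow$ (ii), (iii) is handled in essentially the same way in both (compose with $\Int(k_0^{-1})$, pass to a maximal torus $T$ of $K$ which by equal rank is maximal in $G$, and conclude the corrected automorphism is $\Int(t)$ for some $t\in T$). Where you diverge is in the converse direction: the paper argues by contrapositive, asserting tersely that if $\alpha|_K$ is outer then $\alpha$ lies in a non--identity component of the isotropy subgroup of $I(M)$ at $1K$, whereas you argue directly, writing $\widetilde\alpha\in I^0(\widetilde M)=G\times r(K_2)$ (resp.\ $\bar\alpha\in I^0(M)$) via Reggiani's theorem and then invoking your rigidity statement $(\dagger)$ that agreement of $\alpha_*$ with some $\Ad(k)$ on $\gm$ forces $\alpha=\Int(k)$ because $\gm$ generates $\gg$. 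This buys transparency: the paper's one--line assertion secretly requires exactly the two inputs you isolate, namely the identification of $I^0(M)$ (which the paper only supplies later, in Theorem \ref{no-fibration} of Section \ref{sec4}) and the fact that an automorphism inducing the identity isometry of $M$ is the identity (your $(\dagger)$, which needs $\gg$ simple or at least that $\gk$ contains no ideal of $\gg$ --- a hypothesis implicit in the paper's setup but used silently in its proof as well). Your version is therefore somewhat more self--contained, at the cost of explicitly assuming irreducibility of $M$ and $\widetilde M$ so that Reggiani's theorem applies.
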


\begin{proof} Suppose that $\alpha|_K$ is an inner automorphism.  Then 
we have $k_0 \in K$ such that
$\alpha(k) = k_0kk_0^{-1}$ for every $k \in K$.  Thus
$\alpha' := \Ad(k_0^{-1})\cdot \alpha$ is an isometry of $M$ that
belongs to the same component of $I(M)$ as $\alpha$.  Let $T$ be a maximal
torus of $K$ that contains $k_0$\,.  Then $\alpha' := \Ad(k_0^{-1})\cdot \alpha$
is an isometry of $M$ that belongs to the same component $I^0(M)\alpha$
as $\alpha$.  Now $\alpha'(t) = t$ for every $t \in T$ so there is an
element $t_0 \in T$ such that $\alpha'(g) = t_0gt_0^{-1}$ for every 
$g \in G$.  Consequently $\alpha'' := \Ad(t_0^{-1})\cdot \alpha'$
is the identity in $I(M)$ and belongs to the same component of $I(M)$ as
$\alpha$.  It follows that $\alpha''$ is the identity in $I(\widetilde{M})$
and belongs to the same component of $I(\widetilde{M})$ as $\alpha$.  Thus,
as an isometry, $\alpha \in I^0(M)$ and $\alpha \in I^0(\widetilde{M})$.
We have shown that (i) implies (ii) and (iii).
\medskip

Suppose that $\alpha|_K$ is an outer automorphism.  Then $\alpha \in I(M)$
represents a non-identity
component of the isotropy subgroup at $1K$, i.e. $\alpha \notin I^0(M)$.
In view of Corollary \ref{ck2} we have a natural continuous homorphism
of $I(\widetilde{M})$ to $I(M)$ that maps $I^0(\widetilde{M})$ onto
$I^0(M)$.  Thus $\alpha \notin I^0(\widetilde{M})$.  We have shown that
if (i) fails then (ii) and (iii) fail.  Thus (ii) implies (i) and (iii)
implies (i).  That completes the proof.
\end{proof}

We reformulate Lemma \ref{inn-out} as

\begin{lemma}\label{isotropy}
Let $\rank K = \rank G$.
Let $\widetilde{H}$ denote the isotropy subgroup of $I(\widetilde{M})$ at the
base point $\widetilde{x_0} = 1K_1$\,.  Then the identity component
$\widetilde{H}^0$ is 
$K_1\cdot\{(k_2,r(k_2)) \in \widetilde{G} \mid k_2 \in K_2\}$ and
$$
\widetilde{H} = {\bigcup}_{\alpha \in \Out(G,K_1)\,, \beta \in \Out(G,K_2)}\,  
K_1\alpha\cdot
\bigl \{(k_2,r(k_2)) \in \widetilde{G} \mid 
k_2 \in \widetilde{K_2}\beta\bigr \}
$$
Given $\alpha, \alpha' \in \Out(G,K_1)$ and $\beta, \beta' \in \Out(G,K_2)$,
the components $K_1\alpha\cdot
\bigl \{(k_2,r(k_2)) \in \widetilde{G} \mid
k_2 \in \widetilde{K_2}\beta\bigr \} = K_1\alpha\cdot
\bigl \{(k_2,r(k_2)) \in \widetilde{G} \mid
k_2 \in \widetilde{K_2}\beta\bigr \}$ if and only if both $\alpha = \alpha'$
and $\beta = \beta'$ modulo inner automorphisms.
\end{lemma}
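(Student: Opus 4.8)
The plan is to determine $\widetilde H$ in two stages: first its identity component $\widetilde H^0$ from Theorem \ref{ident-comp}, and then its component group, using Corollary \ref{ck2} to descend to $M$ and Lemma \ref{inn-out} to read off the outer--automorphism data. Throughout, the hypothesis $\rank K = \rank G$ enters only through the applicability of Lemma \ref{inn-out}.

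For the identity component I would invoke Theorem \ref{ident-comp} to write $I^0(\widetilde M) = \widetilde G^0 = G \times r(K_2)$. Since $\widetilde H^0$ is connected and contains the identity, it lies in $I^0(\widetilde M)$, so it is the identity component of the stabilizer of $\widetilde x_0 = 1K_1$ inside $\widetilde G^0$. A direct computation shows that $(g,r(k_2))$ with $k_2 \in K_2$ fixes $1K_1$ exactly when $gk_2^{-1} \in K_1$, i.e. $g \in K_1k_2$; hence the stabilizer in $\widetilde G^0$ is $K_1\cdot\{(k_2,r(k_2)) : k_2 \in K_2\}$. Being the continuous image of the connected group $K_1 \times K_2$, it is connected, so it equals $\widetilde H^0$.

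For the components I would first apply Corollary \ref{ck2}: every isometry fixing $\widetilde x_0$ preserves the fiber $F$ through $\widetilde x_0$ and descends to an isometry of $M = G/K$ fixing $1K$, which gives the continuous homomorphism $I(\widetilde M) \to I(M)$ used in the proof of Lemma \ref{inn-out}. Following Cartan's method, each component of $\widetilde H$ then has a representative that normalizes the left $G$--action: an automorphism $\alpha$ of $G$ preserving $K_1$, acting as the isometry $gK_1 \mapsto \alpha(g)K_1$, possibly composed with an element $(k_2,r(k_2))$ with $k_2 \in \widetilde K_2$. By the centralizer conditions of (\ref{setup}) and Lemma \ref{normalizer}, such an $\alpha$ automatically preserves $K_2$ and $K$. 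The class of $\alpha$ modulo inner automorphisms, seen through its action on the $K_1$--structure, is recorded by $\Out(G,K_1)$; the remaining freedom in the fiber direction, coming from $\widetilde K_2/K_2$ together with those outer automorphisms of the fiber group that extend to $G$, is recorded by $\Out(G,K_2)$ through the cosets $\widetilde K_2\beta$. This produces the product indexing over $\Out(G,K_1)\times\Out(G,K_2)$ and the components $K_1\alpha\cdot\{(k_2,r(k_2)) : k_2 \in \widetilde K_2\beta\}$.

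Finally, for the distinctness assertion I would use Lemma \ref{inn-out} applied separately to the factors $K_1$ and $K_2$: two representatives lie in the same component precisely when the corresponding automorphism differs, on each factor, by an inner automorphism, which by Lemma \ref{inn-out} is exactly the condition that the representatives differ by an element of $I^0(\widetilde M)$. The hard part will be making the $(\alpha,\beta)$ parametrization simultaneously exhaustive and non--redundant. Exhaustiveness rests on the rigidity that an isometry fixing $\widetilde x_0$ is determined by its differential there, forcing it to arise from an automorphism of $G$ together with a fiber isometry of the stated form; non--redundancy, i.e. the genuine independence of the $K_1$-- and $K_2$--data, rests on the fact that the right action of $\widetilde K_2$ centralizes $K_1$ and so can adjust the behavior on $K_2$ without disturbing the $\Out(G,K_1)$--class.
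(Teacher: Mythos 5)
Your proposal follows essentially the same route as the paper: you identify $\widetilde{H}^0$ as the (connected) stabilizer of $1K_1$ in $\widetilde{G}^0 = I^0(\widetilde{M})$ via Theorem \ref{ident-comp}, use Corollary \ref{ck2} to see that elements of $\widetilde{H}$ preserve the fiber $F$ and hence induce, by conjugation, automorphism classes in $\Out(G,K_1)$ and $\Out(G,K_2)$, and settle the distinctness of components with Lemma \ref{inn-out}. The argument is correct and matches the paper's proof in both structure and the lemmas invoked.
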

\begin{proof} The fiber $F = r(K_2)\widetilde{x_0}$ is the group manifold
$K/K_1$\,, and $\widetilde{H}$ preserves $F$ by Corollary \ref{CK-fields}.  The 
isotropy subgroup of $I(F)$ at $\widetilde{x_0}$ has identity component
$diag(K_2) = \{(k_2,r(k_2)) \in \widetilde{G} \mid k_2 \in K_2\}$. 
The group $diag(K_2)$ is connected and is contained in $\widetilde{H}$ 
because it leaves $\widetilde{x_0}$ fixed, so 
$diag(K_2) \subset \widetilde{H}^0$.  Also $K_1 = G\cap \widetilde{H}
\subset \widetilde{H}^0$\,.  It follows that 
$\widetilde{H}^0 = K_1\cdot diag(K_2)$\,, as asserted.
\smallskip

The inclusion ${\bigcup}_{\alpha \in \Out(G,K_1)\,, \beta \in \Out(G,K_2)}\,
K_1\alpha\cdot
\bigl \{(k_2,r(k_2)) \in \widetilde{G} \mid
k_2 \in \widetilde{K_2}\beta\bigr \} \subset \widetilde{H}$ is clear.
\smallskip

Now let $h \in \widetilde{H}$.  Then $h(F) = F$ by Corollary \ref{ck2},
so conjugation by $h$ gives an automorphism $\beta$ of 
$diag(K_2) = \{(k_2,r(k_2)) \in \widetilde{G} \mid k_2 \in K_2\}$, and we
view $\beta$ as an element of $\Out(G,K_2)$.  Furthermore, conjugation by $h$
gives an automorphism $\alpha$ of $G$ and we view $\alpha$ as an element
of $\Out(G,K_1)$.  Thus $\widetilde{H}$ is contained in
${\bigcup}_{\alpha \in \Out(G,K_1)\,, \beta \in \Out(G,K_2)}\,
K_1\alpha\cdot
\bigl \{(k_2,r(k_2)) \in \widetilde{G} \mid
k_2 \in \widetilde{K_2}\beta\bigr \}$, and they are equal, as asserted.
\smallskip

Finally, the last statement is immediate from Lemma \ref{inn-out}.
\end{proof}

We now define two subgroups of isometry groups by
\begin{equation}\label{daggers}
G^\dagger = {\bigcup}_{\alpha \in \Out(G,K_1)}\, G \alpha \subset I(M) 
\text{ and } \widetilde{G}^\dagger = 
{\bigcup}_{\alpha \in \Out(G,K_1)\,, \beta \in \Out(G,K_2)}\, 
G\alpha \cdot r(K_2)\beta \subset I(\widetilde{M}).
\end{equation}
Here $g\alpha$ acts on $M$ by $xK \mapsto g\alpha(x)K$ and on 
$\widetilde{M}$ by  $xK_1 \mapsto g\alpha(x)K_1$\,, and $r(k_2)\beta$
acts on $\widetilde{M}$ by $xK_1 \mapsto x\beta(k_2)^{-1}K_1$\,.
\medskip

\begin{theorem}\label{isogroup}
Let $\pi: \widetilde{M} \to M$ be an isotropy--split fibration as in
{\rm (\ref{setup})}.  Suppose that $\rank K = \rank G$.  Then
the identity component $I^0(\widetilde{M}) = \widetilde{G}^0$ and 
the full isometry groups $I(\widetilde{M}) = \widetilde{G}^\dagger$.
\end{theorem}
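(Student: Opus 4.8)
The plan is to reduce both assertions to the two ingredients already assembled: the identity--component statement of Theorem~\ref{ident-comp} and the isotropy computation of Lemma~\ref{isotropy}, joined by the standard fact that a transitive isometry group factors as its identity component times any isotropy subgroup. Since $G \subset \widetilde{G}^0 \subset I(\widetilde{M})$ already acts transitively on $\widetilde{M} = G/K_1$, for any $h \in I(\widetilde{M})$ I can choose $g_0 \in I^0(\widetilde{M})$ carrying $h\,\widetilde{x_0}$ back to the base point $\widetilde{x_0} = 1K_1$, so that $g_0 h \in \widetilde{H}$ and hence $h \in g_0^{-1}\widetilde{H} \subset I^0(\widetilde{M})\cdot\widetilde{H}$. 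This gives $I(\widetilde{M}) = I^0(\widetilde{M})\cdot\widetilde{H}$, so it suffices to identify $I^0(\widetilde{M})$ and then to evaluate the product $I^0(\widetilde{M})\cdot\widetilde{H}$.

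First I would settle the identity component. This is exactly Theorem~\ref{ident-comp}, so the only point to check is its hypothesis that $\widetilde{M}$ is irreducible. Although the isotropy representation of $K_1$ on the tangent space $\gk_2 \oplus \gm$ is visibly reducible — the fiber summand $\gk_2$ is fixed pointwise because $[\gk_1,\gk_2] = 0$ by (\ref{setup}) — the manifold itself need not split: irreducibility is governed by the restricted holonomy, not by the isotropy alone. I would argue that the isotropy--splitting fibration $\pi$ is a nontrivial riemannian submersion whose horizontal distribution $\gm$ is non--integrable, equivalently that its O'Neill integrability tensor is nonzero, so that the two summands are not parallel and $\widetilde{M}$ admits no local de~Rham splitting. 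With irreducibility in hand, Theorem~\ref{ident-comp} yields $I^0(\widetilde{M}) = \widetilde{G}^0$, which is the first assertion.

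It then remains to evaluate $I^0(\widetilde{M})\cdot\widetilde{H} = \widetilde{G}^0\cdot\widetilde{H}$ using the description of $\widetilde{H}$ in Lemma~\ref{isotropy}. Writing $\widetilde{G}^0 = G\cdot r(K_2)$, the identity--component part $K_1\cdot\{(k_2,r(k_2)) \mid k_2 \in K_2\}$ of $\widetilde{H}$ already lies in $\widetilde{G}^0$, since $K_1 \subset G$ and each pair $(k_2,r(k_2))$ with $k_2 \in K_2$ sits in $G \times r(K_2) = \widetilde{G}^0$; thus it is absorbed. The remaining components, labelled by $\alpha \in \Out(G,K_1)$ and $\beta \in \Out(G,K_2)$, contribute to the product precisely the cosets $G\alpha\cdot r(K_2)\beta$ of the definition (\ref{daggers}), so $I(\widetilde{M}) = \widetilde{G}^\dagger$. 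The last sentence of Lemma~\ref{isotropy} guarantees that these components are distinct and in bijection with $\Out(G,K_1)\times\Out(G,K_2)$, so no collapsing occurs and the decomposition is honest.

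The step I expect to be the main obstacle is the verification of irreducibility needed to invoke Theorem~\ref{ident-comp}, since it is not among the hypotheses of Theorem~\ref{isogroup} and the naive isotropy decomposition $\gk_2\oplus\gm$ superficially suggests a product. One must instead control the restricted holonomy and show that the nonvanishing of the submersion's integrability tensor forces the vertical and horizontal summands to interact, ruling out a de~Rham splitting. Once that is secured the identity--component statement is immediate from Reggiani's theorem, and the passage to the full group is the routine coset bookkeeping sketched above.
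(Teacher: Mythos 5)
Your proposal follows essentially the same route as the paper: the identity component is read off from Theorem \ref{ident-comp}, and the full group is obtained from transitivity via $I(\widetilde{M}) = I^0(\widetilde{M})\cdot\widetilde{H}$ together with the description of the isotropy group $\widetilde{H}$ in Lemma \ref{isotropy}, which is exactly the paper's two-line argument. Your extra attention to the irreducibility hypothesis of Theorem \ref{ident-comp} (which the paper invokes without comment) is a point of additional care rather than a different method, and the coset bookkeeping matches the definition of $\widetilde{G}^\dagger$ in (\ref{daggers}).
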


\begin{proof} The first statement repeats
Theorem \ref{ident-comp}.  As $G$ is transitive on
$\widetilde{M}$ one has $I(\widetilde{M}) = G\widetilde{H}$, and
the assertion follows from Lemma \ref{isotropy}.
\end{proof}

\section{Digression: The Isometry Group Without a Splitting Fibration}
\label{sec4}
\setcounter{equation}{0}

Theorem \ref{isogroup} holds without the splitting fibration.  
That result is useful and we indicate it here.
\medskip

A ``degenerate'' form of Lemma \ref{inn-out} holds as follows: Let $A$ be a
compact connected semisimple Lie group and $B$ a closed connected subgroup
of maximal rank.  Let $N = A/B$, coset space with the normal riemannian metric
from the negative of the Killing form of $A$.  Let $\alpha$ be an automorphism
of $A$ that preserves $B$.  Then $\alpha|_B$ is an inner automorphism of $B$
if and only if, as an isometry, $\alpha \in I^0(N)$.  The proof is immediate
from the proof of Lemma \ref{inn-out}.
\medskip

Next, a ``degenerate'' form of Lemma \ref{isotropy}
holds as follows: Let $H$ denote the isotropy subgroup of $I(N)$ at the 
base point $1K$.  Then $H = {\bigcup}_{\alpha \in \Out(A,B)}\, H^0\alpha$.
Given $\alpha, \alpha' \in \Out(A,B)$ the components $H^0\alpha =
H^0\alpha'$ if and only if $\alpha = \alpha'$ modulo inner automorphisms.
The argument follows by specializing the proof of Lemma \ref{isotropy}
\medskip

Finally, a ``degenerate'' form
of Theorem \ref{isogroup} holds as follows: Let $A$ be a
compact connected semisimple Lie group and $B$ a closed connected subgroup
of maximal rank.  Let $N = A/B$, coset space with the normal riemannian metric
from the negative of the Killing form of $A$,  Then $I^0(N)$ is given by
Theorem \ref{ident-comp} and, in view of the remarks just above,
$I(N) = {\bigcup}_{\alpha \in \Out(A,B)}\, A\alpha$.
\medskip

We summarize these comments as
\begin{theorem}\label{no-fibration}
Let $A$ be a
compact connected semisimple Lie group and $B$ a closed connected subgroup
of maximal rank.  Let $N = A/B$, coset space with the normal riemannian metric
from the negative of the Killing form of $A$.  Then $I^0(N)$ is given by
{\rm Theorem \ref{ident-comp}} and 
$I(N) = {\bigcup}_{\alpha \in \Out(A,B)}\, A\alpha$.
\end{theorem}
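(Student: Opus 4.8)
The plan is to reduce everything to the three ``degenerate'' forms established just above, together with the transitivity of the $A$-action. First I would record that $A$ acts transitively on $N = A/B$ by left translations, so that $I(N) = A\cdot H$, where $H$ is the isotropy subgroup of $I(N)$ at the base point $o = 1B$. The theorem then splits into two essentially independent assertions: the identification of the identity component $I^0(N)$, and the description of the full group $I(N) = \bigcup_{\alpha \in \Out(A,B)} A\alpha$.

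For the identity component there is nothing new to do. Since $N$ is a normal riemannian homogeneous space, I would simply invoke the specialization of Reggiani's theorem (Theorem \ref{ident-comp}), noting that in the absence of an isotropy--splitting there is no right--translation factor $r(K_2)$ to adjoin, so the presentation group $A$ is already all of $I^0(N)$.

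The real work is in describing $H$. Following \'E. Cartan's idea, I would argue that every $h \in H$, being an isometry fixing $o$, is induced by an automorphism $\alpha_h \in \Aut(A)$ preserving $B$ and acting by $xB \mapsto \alpha_h(x)B$, and conversely that each such automorphism lies in $H$. The degenerate form of Lemma \ref{inn-out} then says precisely when $\alpha_h$ falls into the identity component: $\alpha_h|_B$ is an inner automorphism of $B$ if and only if $\alpha_h \in I^0(N)$. Combining this with $H^0 = H \cap A = B$ yields the degenerate form of Lemma \ref{isotropy}, namely $H = \bigcup_{\alpha \in \Out(A,B)} B\alpha$ with distinct cosets for distinct classes modulo inner automorphisms. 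Assembling the pieces, $I(N) = A\cdot H = \bigcup_{\alpha \in \Out(A,B)} A B\alpha = \bigcup_{\alpha \in \Out(A,B)} A\alpha$, using $B \subset A$.

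I expect the main obstacle to be the ``inner implies identity component'' half of the degenerate Lemma \ref{inn-out}, which is exactly where the maximal--rank hypothesis $\rank B = \rank A$ is consumed. If $\alpha_h|_B = \Ad(k_0)$ for some $k_0 \in B$, I would choose a maximal torus $T$ of $B$ containing $k_0$; because $B$ has maximal rank in $A$, the torus $T$ is also maximal in $A$, so after correcting by $\Ad(k_0^{-1})$ I may assume that $\alpha_h$ fixes $T$ pointwise, forcing $\alpha_h = \Ad(t_0)$ for some $t_0 \in T \subset A$ and hence $\alpha_h \in A = I^0(N)$. Without maximal rank this torus argument collapses, so this is the single step that genuinely needs the hypothesis; everything else is bookkeeping with cosets and the factorization $I(N) = A\cdot H$.
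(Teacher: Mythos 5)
Your proposal is correct and follows essentially the same route as the paper: Section \ref{sec4} likewise obtains the theorem by quoting Theorem \ref{ident-comp} for $I^0(N)$ and stating ``degenerate'' forms of Lemmas \ref{inn-out} and \ref{isotropy} (whose proofs it declares immediate from, or obtained by specializing, the originals), then assembles $I(N)=A\cdot H$ exactly as you do. Your isolation of the maximal--torus step as the point where $\rank B=\rank A$ is consumed coincides with the paper's proof of Lemma \ref{inn-out}.
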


\section{Isometries of Constant Displacement: Case $\chi(M) \ne 0$}
\label{sec5}
\setcounter{equation}{0}
Fix an isotropy--splitting fibration $\pi: \widetilde{M} \to M$ as in
(\ref{setup}).  In this section we look at isometries of constant
displacement on $\widetilde{M} = G/K_1$ where the Euler--Poincar\' e
characteristic $\chi(M) \ne 0$, in other words where $\rank K = \rank G$.
Then $\chi(M) = |W_G|/|W_K| > 0$ where $W$ denotes the Weyl group.
Some important examples are the isotropy--splitting fibrations
described in Sections
\ref{ssec2a}, \ref{ssec2b}, \ref{ssec2c} and \ref{ssec2d}.
\medskip

In Section \ref{sec6} we will look at cases where $\chi(M) = 0$, and in 
Section \ref{sec7} we will consider the remaining cases where $M$ is an 
irreducible riemannian symmetric space.

\begin{lemma} \label{fp-below}
If $\rank K = \rank G$ and $\widetilde{g} = (g,r(k_2)) \in \widetilde{G}$
then there is a fiber
$xF = \pi^{-1}(xK)$ of $\widetilde{M} \to M$ that is invariant
under the action of $\widetilde{g}$ on $\widetilde{M}$.
\end{lemma}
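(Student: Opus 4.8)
We have $\widetilde{g} = (g, r(k_2))$ acting on $\widetilde{M} = G/K_1$ by $xK_1 \mapsto gx k_2^{-1} K_1$. This descends to the isometry $g: M \to M$ on the base $G/K$, sending $xK \mapsto gxK$. We want to find a fiber invariant under $\widetilde{g}$. Since fibers are exactly $\pi^{-1}(xK)$ and $\widetilde{g}$ covers the base isometry $g$, a fiber $xF$ is $\widetilde{g}$-invariant iff its image $xK \in M$ is fixed by $g$, i.e. $gxK = xK$, i.e. $x^{-1}gx \in K$. So the whole problem reduces to finding a fixed point of $g$ acting on $M = G/K$.

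**The plan.** The key observation is that $g: M \to M$ is an isometry lying in the identity component, and $\chi(M) = |W_G|/|W_K| > 0$ because $\mathrm{rank}\,K = \mathrm{rank}\,G$. The plan is to invoke a fixed-point theorem: an isometry in the identity component of a compact homogeneous space with nonzero Euler characteristic must have a fixed point. Concretely, I would argue as follows. Since $G$ is compact and acts transitively on $M$, conjugate $g$ into a maximal torus: every element of a compact connected Lie group is conjugate to an element of a fixed maximal torus $T \subset K$. So write $g = h t h^{-1}$ with $t \in T$ and $h \in G$. Then $g$ fixes the point $hK \in M$ precisely when $h^{-1}gh = t$ stabilizes $1K$, i.e. when $t \in K$; but $t \in T \subset K$ holds automatically. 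Hence $g(hK) = ht h^{-1} h K = ht K = hK$ since $t \in K$. Thus $hK$ is a fixed point of $g$.

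**Completing the argument.** Once $g$ fixes the base point $xK$ (with $x = h$ above), the fiber $xF = \pi^{-1}(xK)$ satisfies $\widetilde{g}(xF) \subset \pi^{-1}(g \cdot xK) = \pi^{-1}(xK) = xF$. Because $\widetilde{g}$ is a bijection of $\widetilde{M}$ preserving the fibration (every element of $\widetilde{G}$ sends fiber to fiber, as noted after (\ref{g-tilde})), it restricts to a bijection of $xF$, so $xF$ is invariant. This gives the claim.

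**The main obstacle.** The genuine content is the fixed-point claim for $g$ on $M$, and the subtlety is that one must use $\mathrm{rank}\,K = \mathrm{rank}\,G$ somewhere. The clean route above uses only that every element of $G$ is conjugate into the maximal torus $T$, together with $T \subset K$ — and $T \subset K$ being a maximal torus of $G$ is exactly the equal-rank hypothesis. I expect the main point to verify carefully is that $g$ is realized as left translation by an element of $G$ (so that conjugacy into $T$ applies), which is immediate here since $\widetilde{g} = (g, r(k_2))$ has $G$-component $g \in G$ acting by left translation on both $\widetilde{M}$ and $M$. Thus no appeal to general Euler-characteristic fixed-point theory (Lefschetz) is strictly needed; the torus-conjugacy argument is more elementary and gives the invariant fiber directly.
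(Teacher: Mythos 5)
Your proof is correct and follows essentially the same route as the paper's: both reduce to finding a fixed point of the induced isometry on $M$ and obtain it by conjugating into a maximal torus, which lies in $K$ by the equal-rank hypothesis. (The paper phrases the conjugation inside $\widetilde{G}=G\times r(K_2)$ rather than just $G$, but since $r(k_2)$ acts trivially on $M$ this is the same argument.)
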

\begin{proof} Every element of the compact connected Lie group 
$\widetilde{G}$ belongs to a maximal torus, thus is conjugate to 
an element of $K \times r(K_2)$, and consequently has a fixed point on $M$.
\end{proof}

We need an observation concerning the geodesics in $\widetilde{M}$ and $F$.  
\begin{lemma}\label{go-space} The isotropy--split manifold
$\widetilde{M}$ is a geodesic orbit space, i.e. every geodesic is the orbit 
of a one--parameter subgroup of $G$\,.  The fiber $F$ of
$\widetilde{M} \to M$ is totally geodesic in $\widetilde{M}$ and also 
is a geodesic orbit space.  Every geodesic of $\widetilde{M}$ tangent
to $F$ is of the form 
$t \mapsto \exp(t\xi)x$ with $x \in F$ and $\xi \in \gk_2$\,.
\end{lemma}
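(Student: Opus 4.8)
The plan is to exploit the fact that $\widetilde{M} = G/K_1$ carries a \emph{normal} homogeneous metric, so that geodesics through the base point are orbits of one--parameter subgroups, and then to transport this description everywhere using the homogeneity of $\widetilde{M}$ and the explicit structure of the fiber. First I would fix the base point $o = 1K_1$ and recall the standard geodesic formula for a naturally reductive (in particular normal homogeneous) space: since the metric comes from the bi--invariant metric $-B$ on $G$, the curve $t \mapsto \exp(t\xi)o$ is a geodesic of $\widetilde{M}$ for every $\xi \in \gm_1 := \gk_1^\perp$, and these are all the geodesics through $o$. (One sees this by viewing $G \to G/K_1$ as a riemannian submersion and noting that $t\mapsto \exp(t\xi)$ is a geodesic of $G$ that stays horizontal.) For the first assertion, take any geodesic through an arbitrary point $y = go$. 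By homogeneity it is the image under the isometry $L_g$ of a geodesic through $o$, hence has the form $g\exp(t\xi)o = \exp(t\,\Ad(g)\xi)\cdot y$, an orbit of the one--parameter subgroup $\{\exp(t\,\Ad(g)\xi)\}$ of $G$. Thus $\widetilde{M}$ is a geodesic orbit space.

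Next I would pin down the fiber. Since $\gk_1 \perp \gk_2$ and $-B$ is positive definite on $\gk$, the intersection $K_1\cap K_2$ is finite, so $\gk = \gk_1 \oplus \gk_2$ orthogonally and $\gm_1 = \gk_2 \oplus \gm$; in particular $T_oF = \gk_2$. Because $K_2 \subset Z_G(K_1)$ by (\ref{setup}), the right action gives $F = r(K_2)o = \{k_2K_1 : k_2 \in K_2\} = K_2\cdot o$ under the \emph{left} $G$--action. Now let $\gamma$ be any geodesic of $\widetilde{M}$ tangent to $F$ at a point $x \in F$, and write $x = ko$ with $k \in K_2$. The left translation $L_k$ is an isometry lying in $G$, it carries $o$ to $x$, and it preserves $F$ setwise (as $kK_2 = K_2$). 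Hence $L_k^{-1}\gamma$ is a geodesic through $o$ whose initial velocity lies in $T_oF = \gk_2$, so by the base--point formula $L_k^{-1}\gamma(t) = \exp(t\eta)o$ with $\eta \in \gk_2$. Pushing forward, $\gamma(t) = k\exp(t\eta)o = \exp(t\,\Ad(k)\eta)x = \exp(t\xi)x$ with $\xi = \Ad(k)\eta \in \gk_2$, using $\Ad(K_2)\gk_2 = \gk_2$. This is exactly the third assertion.

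Finally, both remaining claims fall out of this explicit form. Since $\exp(t\xi) \in K_2 \subset K$ and $x \in F = K_2\cdot o$, every such geodesic stays in $F$; as these are all the geodesics tangent to $F$, the fiber $F$ is totally geodesic. Moreover each of them is the orbit of the one--parameter subgroup $\{\exp(t\xi)\} \subset K_2$ acting on $F$, so $F$ is itself a geodesic orbit space.

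I expect the main points requiring care to be the following. First, the geodesic--through--the--origin formula $t\mapsto\exp(t\xi)o$ must be invoked correctly as the defining property of the normal (naturally reductive) metric rather than re-derived, since it is the one genuinely nontrivial input. Second, the bookkeeping that makes the transport argument work, namely that $L_k$ with $k\in K_2$ genuinely preserves $F$ and maps $T_xF$ onto $T_oF$, and that $\Ad(k)$ keeps the velocity inside $\gk_2$, rests on $K_2$ centralizing $K_1$ and on $\gk_2$ being $\Ad(K_2)$--invariant; I would state these explicitly. The one place where circularity could creep in is using total geodesy to identify geodesics of $F$; I avoid this by deriving the explicit form $\exp(t\xi)x$ first and reading off total geodesy only afterwards.
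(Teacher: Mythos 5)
Your proposal is correct and follows essentially the same route as the paper: both rest on the fact that the normal metric makes $G/K_1$ naturally reductive, so geodesics through $1K_1$ are the orbits $t\mapsto\exp(t\xi)\cdot 1K_1$ with $\xi\in\gk_1^\perp$, and both then handle the fiber by noting $T_{1K_1}F=\gk_2$ and transporting by the transitive $K_2$--action on $F$. The only cosmetic difference is that the paper verifies the naturally reductive identity by the explicit $\ad$--antisymmetry computation and cites \cite{KV1991}, \cite{G1996} (and remarks that $F$ is symmetric under $K_2\times r(K_2)$), whereas you invoke the same geodesic formula via the riemannian submersion $G\to G/K_1$.
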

\begin{proof} Recall that $\widetilde{M}$ is a normal homogeneous space 
relative to the group $G$ and the riemannian metric given by the negative 
of the Killing form $\kappa$ of $G$.  Write $\gg = \gk_1 + \gm_1$ where 
$\gm_1 = \gk_1^\perp$ relative to $\kappa$.  Write 
$\langle \,\cdot\, , \,\cdot\,\rangle$ for $-\kappa$.
It is positive definite on $\gg$.  If $\xi, \eta, \zeta \in \gm_1$ then
$\ad(\xi)$ is antisymmetric relative to $\langle \,\cdot\, , \,\cdot\,\rangle$
so $0 = \langle [\xi,\eta],\zeta\rangle + \langle \eta,[\xi,\zeta]\rangle
= \langle [\xi,\eta]_{\gm_1},\zeta\rangle + \langle \eta,[\xi,\zeta]_{\gm_1}
\rangle$.  In other words (see \cite[Definition 1.3]{G1996}),
\begin{equation}\label{nat-red}
\text{the } G \text{--homogeneous space } \widetilde{M} \text{ is naturally
reductive relative to } G \text{ and } \gg = \gk_1 + \gm_1\,.
\end{equation}
If $\xi \in \gm_1$ now (\cite{KV1991} or see \cite{G1996}) 
$t \mapsto \exp(t\xi)\cdot 1K_1$ is a geodesic in $\widetilde{M}$.  In 
particular $\widetilde{M}$ is a geodesic orbit space and $F$ is totally
geodesic in $\widetilde{M}$.  But $F$ is a riemannian symmetric space
under $K_2 \times r(K_2)$ with the metric obtained by restriction of
$\langle \,\cdot\, , \,\cdot\,\rangle$.  Thus
every geodesic of $\widetilde{M}$ tangent to $F$ at $1K_1$ has form 
$t \mapsto \exp(t\xi)(1K_1)$
with $\xi \in \gk_2$\,.  As $K_2$ acts transitively on $F$ with finite
kernel every geodesic in $F$ has form $t \mapsto \exp(t\xi)x$
with $x \in F$ and $\xi \in \gk_2$\,.
\end{proof}

Our principal results, starting with Proposition \ref{cw-conn} just below,
will depend on a certain flat rectangle argument.  The idea is that we have
two commuting Killing vector fields $\xi_1$ and $\xi_2$\,, typically
$\xi_2 \in dr(\gk_2)$ and $\xi_1 \in \gg$, such that $\xi_1 \perp \gk_1$ and 
both $g\times r(k_2) = \exp(\xi_1 + \xi_2)$ and $r(k_2) = \exp(\xi_2)$
have the same constant displacement.  Then the 
$\exp(t_1\xi_1 + t_2\xi_2)(1K_1)$, for $0 \leqq t_i \leqq 1$, 
form a flat rectangle.  There $r(k_2)$ is displacement along one side while
$g\times r(k_2)$ is displacement along the diagonal.  Since these displacements
are the same we argue that $\xi_1 = 0$.
\medskip

\begin{proposition}\label{cw-conn}
Suppose that $\rank K = \rank G$.
Let $\Gamma$ be a subgroup of $\widetilde{G}$ such that every
$\gamma \in \Gamma$ is an isometry of constant displacement on $\widetilde{M}$.
Then $\Gamma \subset (Z_G \times r(K_2))$ where $Z_G$ denotes the center
of $G$.
\end{proposition}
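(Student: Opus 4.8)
The plan is to fix a single $\gamma = (g, r(k_2)) \in \Gamma$ and prove $g \in Z_G$ and $k_2 \in K_2$. Both the constant--displacement hypothesis and the desired conclusion are invariant under conjugation by $G$: inner automorphisms fix $Z_G$, the subgroup $G$ centralizes the right action $r(\widetilde{K_2})$, and $G$ is transitive on the fibers of $\pi$. Moreover the constant--displacement condition is closed. So I am free to normalize $\gamma$ by $G$--conjugation. First I would apply Lemma~\ref{fp-below} to produce a $\gamma$--invariant fiber, and conjugate by a suitable element of $G$ so that $F = \pi^{-1}(1K)$ itself is invariant; equivalently (Corollary~\ref{ck2}) the induced isometry of $M$ fixes the base point $1K$. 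In the principal case $k_2 \in K$ this gives $g \in K$, and, using $K = (K_1\times K_2)/(K_1\cap K_2)$ with $[\gk_1,\gk_2]=0$, I may write $g = g_1 g_2$ with $g_i \in K_i$; this already kills the part of $g$ transverse to $K$.

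Next I would isolate the vertical directions. Because constant--displacement isometries form a closed set, $S := \overline{\langle \gamma\rangle}$ is a compact abelian subgroup of $\widetilde{G}$ all of whose elements have constant displacement. For $\eta$ in the Lie algebra of $S^0$ the entire one--parameter group $\exp(t\eta)$ then has constant displacement, so $\eta$ is a Killing field of constant length; by Corollary~\ref{CK-fields}(2),(4) (here $\rank K = \rank G$, so $\gl = 0$) this forces $\eta \in dr(\gk_2)$. Hence $S^0 \subseteq r(K_2)$: the only one--parameter directions along which $\gamma$ is joined to the identity are the fiber directions $dr(\gk_2)$, and whatever nontriviality $\gamma$ carries in the $G$--factor is confined to the finite group $S/S^0$.

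The heart of the matter is the flat--rectangle step, which I would use to force $g \in Z_G$. Restricted to the totally geodesic fiber $F$ (Lemma~\ref{go-space}), which is a group manifold with bi--invariant metric, $\gamma$ acts as a two--sided translation. Suppose $g \notin Z_G$. Since $\rank K = \rank G$, Lemma~\ref{no-nonzero} guarantees that $\ad_\gg(\gk)|_\gm$ has no nonzero fixed vector, so $\Ad(g)$ rotates some horizontal direction nontrivially. Exploiting the naturally reductive structure of $\widetilde{M}$ (Lemma~\ref{go-space}), I would produce a commuting pair of Killing fields $\xi_1 \in \gg$ with $\xi_1 \perp \gk_1$ and $\xi_2 \in dr(\gk_2)$ such that $\exp(\xi_1+\xi_2)$ realizes $\gamma$ while $\exp(\xi_2) = r(k_2)$, and such that the orbit map $(t_1,t_2) \mapsto \exp(t_1\xi_1 + t_2\xi_2)\widetilde{x_0}$ sweeps out a flat rectangle. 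There $r(k_2)$ is displacement along one side and $\gamma$ is displacement along the diagonal; since these two constant displacements are equal, the remaining side must have length zero, i.e. $\xi_1 = 0$, contradicting the nontrivial horizontal rotation. Therefore $g \in Z_G$, and with $S^0 \subseteq r(K_2)$ this yields $\gamma \in Z_G \times r(K_2)$.

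I expect the main obstacle to be making the flat--rectangle argument rigorous. One must construct the commuting fields $\xi_1,\xi_2$ so that $\exp(\xi_1+\xi_2)$ genuinely equals $\gamma$ --- in particular reconciling the factor $g_1 \in K_1$, which fixes $\widetilde{x_0}$ and is therefore invisible on $F$ but acts nontrivially in horizontal directions, by choosing the base point or the spanning fields so that $g_1$ is seen --- then arrange that $\gamma$ and $r(k_2)$ carry the \emph{same} constant displacement, and finally justify that in the flat the displacement equals the length of the translation vector, which requires the connecting geodesics to be minimizing and to remain in the flat. The residual point that $k_2$ lands in the identity component $K_2$ (rather than merely in $\widetilde{K_2}$) is then handled by the group--manifold analysis of $F$ together with $S^0 \subseteq r(K_2)$.
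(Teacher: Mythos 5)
Your overall skeleton matches the paper's: conjugate $\gamma=(g,r(k_2))$ so that it preserves a fiber, reduce to $\gamma\in(K_1K_2\times r(K_2))$, and then kill the residual left factor by an orthogonality argument. But there is a genuine gap at the decisive step. You assert that $\gamma$ and $r(k_2)$ ``have the same constant displacement'' and read off $\xi_1=0$ from the flat rectangle. At the point where you invoke this, all you know is that $\gamma|_F$ is the two--sided translation $x\mapsto g_2xk_2^{-1}$ (writing $g=g_1g_2$ with $g_i\in K_i$), so its displacement at the base point is $d(1K_1,\,g_2k_2^{-1}K_1)$ while that of $r(k_2)$ is $d(1K_1,\,k_2^{-1}K_1)$; these differ precisely when $g_2$ is nontrivial, which is the case you are trying to exclude. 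So the equality of displacements is essentially equivalent to the conclusion and cannot be assumed. (The flat--rectangle heuristic is indeed the paper's announced strategy, but in this proposition it is implemented differently.)

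The paper closes the gap in two stages that your proposal omits. First, it applies the group--manifold case of the constant--displacement theorem, \cite[Theorem 4.5.1]{W1962}, to $\gamma|_F$ on the totally geodesic fiber $F$, concluding that $\gamma|_F$ is either a pure left or a pure right translation. Second, in the left--translation case it does not compare $\gamma$ with $r(k_2)$ at all: it conjugates $\gamma=zk_2\in Z_GT_2$ by elements of the Weyl group $W(G,T)$, where $T=T_1T_2$ is a maximal torus of $G$ precisely because $\rank K=\rank G$; each conjugate has the same displacement $||\xi||$ by conjugation--invariance, yet its displacement at $1K_1$ is at most $||w(\xi)''||$, the norm of the $\gt_2$--component, which forces $w(\xi)\in\gt_2$ for every $w$ --- impossible for $\xi\ne 0$ since $W(G,T)$ acts irreducibly on $\gt$. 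Thus the needed equality of displacements is obtained for free from conjugation--invariance rather than assumed. Two smaller points: your observation that $S^0\subseteq r(K_2)$ via Corollary \ref{CK-fields} is correct but only shows that $g$ has finite order, not that $g\in Z_G$; and Lemma \ref{no-nonzero} concerns the action of the whole isotropy algebra, so it is not the right citation for the claim that a single noncentral $g\in K$ moves some horizontal vector (that claim is true, but because $\gm$ generates $\gg$).
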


\noindent {\em Note}: Proposition \ref{cw-conn} applies in particular to the 
isotropy--split fibrations $\widetilde{M} \to M$ described in Sections
\ref{ssec2a} through \ref{ssec2d}.

\begin{proof} Let $\gamma = (g,r(k_2)) \in \Gamma$.  By Lemma \ref{fp-below} 
and conjugacy of maximal tori in $G$, we
have $h \in G$ such that $\gamma(hF) = hF$.  Since both $Z_G \times r(K_2)$ and
``constant displacement'' are fixed under $\Ad(G)$ we may replace
$\gamma$ by its $\Ad(G)$--conjugate $(h^{-1},1)(g,r(k_2)) (h,1)$, which 
preserves $F$ and still consists of isometries of constant displacement.
That done, $\gamma \in (K_1K_2 \times r(K_2))$.
\medskip

The group $K_1$ fixes the base point $\widetilde{x_0} = 1K_1 \in \widetilde{M}$.
If $k \in K_2$ then $K_1k\widetilde{x} = kK_1\widetilde{x} 
= k\widetilde{x}$.  Now $K_1$ fixes every point of $F$, so
$\gamma|_F \in (K_2 \times r(K_2))$.
As $F$ is totally geodesic in $\widetilde{M}$,\, $\gamma|_F$ is an isometry
of constant displacement on $F$.  Now \cite[Theorem 4.5.1]{W1962} says that 
either $\gamma|_F \in (K_2 \times r(\{1\}))$ or $\gamma|_F \in r(K_2)$.
\smallskip

Suppose that $\gamma|_F = zk_2 \in Z_GK_2$\,.  Then $\gamma = zk_1k_2$ 
and also has constant 
displacement on $F$, hence on $\widetilde{M}$.  Let $T_i \subset K_i$ be a 
maximal torus, so $T := T_1T_2$ is a maximal torus of $K$, and thus of $G$. 
Replace $\gamma$ by a conjugate and assume $\gamma = zk_2 \in Z_GT_2$\,.
Lemma \ref{go-space} gives us $\xi \in \gt_2$ such that $\exp(t\xi)\cdot1K_1\,,
0 \leqq t \leqq 1$, is the minimizing geodesic in $\widetilde{M}$ from
$1K_1$ to $zk_2K_1$\,.  In particular the (constant) displacement of
$\gamma = zk_2$ is $||\xi||$.  Let $w$ belong to the Weyl group $W(G,T)$,
say $w = \Ad(s)|_\gt$ where $s$ normalizes $T$.  Then $w(\gamma) = szk_2s^{-1}$
has the same constant displacement $||\xi||$ as does $\gamma$.  Note that
$w(\gamma)\cdot 1K_1 = \exp(w(\xi))\cdot 1K_1$\,.  Decompose $w(\xi) =
w(\xi)' + w(\xi)''$ where $w(\xi)' \in \gt_1$ and $w(\xi)'' \in \gt_2$\,.
Then $\exp(w(\xi))\cdot 1K_1 = \exp(w(\xi)'')\cdot 1K_1$ so
$||w(\xi)''|| = ||w(\xi)||$.  This says $w(\xi) \in \gt_2$ for every
$w \in W(G,T)$.  But $W(G,T)$ acts irreducibly on $\gt$, so an orbit $\ne 0$
cannot be confined to a proper subspace.  This contradicts
$\gamma = k_2 \in K_2$\,.  We conclude $\gamma \in (Z_G\times r(K_2))$.
\medskip

We have just shown that every $\gamma \in \Gamma$ is $\Ad(G)$--conjugate to
an element of $Z_G\times r(K_2)$.  As $G$ centralizes both $Z_G$ and 
$r(K_2)$ it follows that $\Gamma \subset (Z_G \times r(K_2))$.
\end{proof}

Proposition \ref{cw-conn} holds whether or not the maximal rank
subgroup $K$ of $G$ is a maximal subgroup, describing the.
groups of isometries of constant displacement on $\widetilde{M}$ that 
are contained in the identity component $I^0(\widetilde{M})$.  
Next, we look in the other components of $I(\widetilde{M})$.  That
will require an understanding of the full isometry group $I(M)$.

\begin{lemma}\label{outers}
Suppose that $\rank K = \rank G$.
Let $\alpha \in \Out(G,K_1)$ and $\gamma \in \widetilde{G}\alpha$ such that
both $\gamma$ and $\gamma^2$ are isometries of constant displacement on
$\widetilde{M}$.  Then $\alpha|_{K_1}$ is an inner automorphism of $K_1$
and $\gamma \in (Z_G \times r(K_2))$.
\end{lemma}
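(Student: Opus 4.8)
The plan is to reduce Lemma~\ref{outers} to the already-established Proposition~\ref{cw-conn} by first showing that the outer part of $\gamma$ must vanish, i.e.\ that $\alpha|_{K_1}$ is inner and hence (by Lemma~\ref{inn-out}) that $\gamma$ actually lies in the identity component $\widetilde{G}^0$ realized inside $I^0(\widetilde{M})$. Once that is known, the conclusion $\gamma \in (Z_G \times r(K_2))$ follows immediately by applying Proposition~\ref{cw-conn} to the cyclic group $\Gamma = \langle \gamma\rangle$, since every power of $\gamma$ will then be an isometry of constant displacement lying in $\widetilde{G}$. So the whole content of the lemma is the assertion that $\alpha|_{K_1}$ is inner; everything else is bookkeeping.

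**First I would** use the hypothesis that $\gamma^2$ has constant displacement. Writing $\gamma \in \widetilde{G}\alpha$, we have $\gamma^2 \in \widetilde{G}\alpha^2$, and I want to arrange that $\gamma^2$ lands in the identity component so that Proposition~\ref{cw-conn} applies to it. The natural device is the homomorphism $I(\widetilde{M}) \to \Out(G,K_1)$ (valid by Lemma~\ref{normalizer} and Corollary~\ref{ck2}) under which $\gamma \mapsto \alpha$ and $\gamma^2 \mapsto \alpha^2$. I would argue that by Lemma~\ref{inn-out} the image $\alpha$ is trivial in $\Out(G,K_1)$ precisely when $\gamma$ lies in $I^0(\widetilde{M})$, so the task is to rule out $\alpha$ being a genuinely nontrivial outer class. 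The square hypothesis is what gives leverage: $\gamma^2$ lies in $\widetilde{G}\alpha^2$, and if $\alpha$ has order $2$ in $\Out(G,K_1)$ then $\gamma^2 \in \widetilde{G}^0$ has constant displacement, so by Proposition~\ref{cw-conn} we get $\gamma^2 \in (Z_G \times r(K_2))$.

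**The main obstacle**, and the step I expect to require real work, is showing that a nontrivial outer $\alpha$ cannot support a constant-displacement $\gamma$. Here I would run a version of the flat-rectangle argument flagged in the discussion before Proposition~\ref{cw-conn}, combined with a fixed-point analysis. Since $\rank K = \rank G$, the element $\gamma$ (being an isometry normalizing $r(K_2)$, hence preserving the fibration) induces an isometry $\bar\gamma$ of $M$ lying in the component $I^0(M)\alpha$. An outer $\alpha$ acts nontrivially on the Dynkin diagram / maximal torus, so I would examine the displacement function of $\gamma$ at a point of a $\gamma$-invariant fiber (such a fiber exists by the maximal-torus argument of Lemma~\ref{fp-below}, suitably adapted to $\widetilde{G}\alpha$). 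On that fiber I would compare the displacement of $\gamma$ with that of $\gamma^2 \in (Z_G \times r(K_2))$: constancy of displacement for both, together with the minimizing-geodesic description from Lemma~\ref{go-space}, forces the tangential $\gk_2$-component of the displacement to be $W(G,T)$-confined to a proper subspace, contradicting the irreducibility of the Weyl-group action on $\gt$ exactly as in the proof of Proposition~\ref{cw-conn}.

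**Finally I would** assemble the pieces: the contradiction forces $\alpha$ to be trivial in $\Out(G,K_1)$, so $\alpha|_{K_1}$ is an inner automorphism of $K_1$ by the equivalence in Lemma~\ref{inn-out}; then $\gamma \in \widetilde{G}^0 \subset I^0(\widetilde{M})$ has constant displacement, and Proposition~\ref{cw-conn} applied to $\langle \gamma \rangle$ yields $\gamma \in (Z_G \times r(K_2))$, which is the desired conclusion. The delicate point throughout is keeping careful track of which component of $I(\widetilde{M})$ each element lives in, and ensuring the reduction of the displacement computation to the fiber $F$ is legitimate, which it is because $F$ is totally geodesic by Lemma~\ref{go-space} and $\gamma$ can be conjugated to preserve it.
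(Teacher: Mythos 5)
Your high-level strategy (kill the outer part, then quote Proposition \ref{cw-conn}) is the right one, and you correctly identify that the whole content of the lemma is the claim that $\alpha|_{K_1}$ is inner. But the step that carries all the weight --- ruling out a genuinely outer $\alpha$ --- is not actually carried out, and the mechanism you propose for it does not transfer. The Weyl-group irreducibility argument in Proposition \ref{cw-conn} works because a constant-displacement element of $\widetilde{G}^0$ can be conjugated into a maximal torus $T$ of $G$, and conjugation by $N_G(T)$ then moves the relevant vector $\xi \in \gt_2$ through a full $W(G,T)$-orbit inside the irreducible $W(G,T)$-module $\gt$. For $\gamma$ in the outer component $\widetilde{G}\alpha$ this breaks down: conjugating $\alpha\exp(\xi)$ by $s \in N_G(T)$ does not act on $\xi$ by $W(G,T)$, and the correct normal form comes from de Siebenthal's theorem on nonconnected compact groups, which gives an $\alpha$-invariant maximal torus $T_1 \subset K_1$ and a representative $\alpha\cdot\exp(\xi_1)$ with $\xi_1$ confined to the fixed-point set $\gt_1^\alpha$. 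The paper uses exactly this, and then derives the contradiction not from Weyl-group irreducibility but from a perpendicularity/growth argument: if $\xi_1 \ne 0$ (and then again if $\alpha \ne 1$), moving slightly away from $1K_1$ in a direction orthogonal to the invariant fiber $F$ strictly increases the displacement of $\gamma$, contradicting constancy. Without some such argument your proof has a hole precisely where the lemma's content lies.

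Two smaller points. First, your use of the $\gamma^2$ hypothesis (to push $\gamma^2$ into $\widetilde{G}^0$ when $\alpha$ has order $2$ in $\Out(G,K_1)$) covers only involutive outer classes and is not how that hypothesis is consumed in the paper: there, $\gamma$ is first arranged to preserve the totally geodesic fiber $F$, and the constant displacement of $\gamma|_F$ on the group manifold $F$ is what permits the appeal to \cite[Theorem 4.5.1]{W1962}, forcing $\gamma|_F \in r(K_2)$ and in particular killing the outer part $\beta$ of the fiber action --- a component of $\gamma$ your proposal never addresses. Second, Proposition \ref{cw-conn} is applied element by element, so you neither need nor can justify the claim that every power of $\gamma$ has constant displacement; once $\gamma$ itself is known to lie in $\widetilde{G}^0$ and to have constant displacement, that proposition already yields $\gamma \in (Z_G \times r(K_2))$.
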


\begin{proof}  Let $\gamma =  (\alpha \cdot g) \times (r(k_2)\cdot\beta)$ 
as in (\ref{daggers}) and Theorem \ref{isogroup}, using
$\alpha \cdot g = \alpha(g)\cdot \alpha$.  Exactly as in the proof of 
Proposition \ref{cw-conn} we may assume that 
$(g \times r(k_2)\cdot \beta)F = F$.  Now
$gF = F$ and 
$\gamma(F) = (\alpha \cdot g)(F) = 
\alpha(F)$.  But $\alpha K_1 = K_1$ and (\ref{setup}) together imply
$\alpha(K_2) = K_2$.  Thus $\alpha(K) = K$, in other words $1K$ is a fixed
point for $\alpha$ on $G/K$; equivalently, $\alpha(F) = F$.  Now 
$\gamma(F) = F$.  As $F$ is totally geodesic in $\widetilde{M}$,
$\gamma|_F$ has constant displacement on $F$, so $\gamma|_F \in r(K_2)$.
In particular $\beta = 1$ and 
$\gamma \in ((\alpha \cdot Z_G K_1) \times r(K_2))$.
\medskip

Now we argue along the lines of the proof of Proposition
\ref{cw-conn}.  Both $\gamma$ and $r(k_2)$ have the same constant displacement
(call it $c$) on $F$, thus on $\widetilde{M}$.  
Following de Siebenthal \cite{S1956} we have an $\alpha$--invariant maximal 
torus $T_1 \subset K_1$ such that (after a $K_1$--conjugation) 
$\alpha k_1 \in \alpha T_1^\alpha$ where $T_1^\alpha$ is the fixed point
set of $\alpha$ on $T_1$\,.  Express 
$\alpha k_1 = \alpha \cdot \exp(\xi_1)$
where $\xi_1 \in \gt_1^\alpha$.  Let $T_2$ be a maximal torus of $K_2$
such that $k_2 = \exp(\xi_2)$ for some $\xi_2 \in \gt_2$\,.  
Let $\xi = \xi_1 + \xi_2$\,.  We may assume the $\xi_i$ chosen so that
$\alpha \cdot \exp(t\xi)\cdot 1K_1$\,, $0 \leqq t \leqq 1$, is a minimizing 
geodesic from $1K_1$ to $\gamma(1K_1)$.  Then $\exp(t\xi_2)\cdot 1K_1$\,, 
$0 \leqq t \leqq 1$, also is a minimizing geodesic from $1K_1$ to 
$\gamma(1K_1)$.  Now the corresponding vector fields $d\alpha(\eta)$ and 
$\eta_2$ on $\widetilde{M}$ satisfy $||d\alpha(\eta)|| = c = ||\eta_2||$ 
at every point of $\widetilde{M}$.  
\medskip

If $\xi_1 \ne 0$ then, as we move a little bit away from $1K_1$ in some 
direction orthogonal to $F$, $||d\alpha(\eta_1)||$ increases from $0$.  
That increase in $||d\alpha(\eta_1)||$ would cause an increase in 
$||d\alpha(\eta)||$ because $d\alpha(\eta_1)$ and $d\alpha(\eta_2)$ would
remain close to orthogonal.  
We conclude $\xi_1 = 0$.  Now $\gamma = \alpha \times r(k_2)$.  
Again, if $\alpha \ne 1$ then, as we move away from $1K_1$
in some direction, the displacement of $\alpha$ would increase from
$0$, and that would cause an increase in the displacement
of $\gamma$.  We conclude that $\alpha$ is inner and 
$\gamma \in Z_G \times r(K_2)$.
\end{proof}

Finally we come to the main result of this section.

\begin{theorem}\label{cw}
Suppose that $\rank K = \rank G$.
If $\Gamma$ is a group of isometries of constant displacement on
$\widetilde{M}$ then $\Gamma \subset (Z_G \times r(K_2))$ where 
$Z_G$ denotes the center of $G$.  Conversely, if $\Gamma \subset 
(Z_G \times r(K_2))$ then every $\gamma \in \Gamma$ is an isometry
of constant displacement on $\widetilde{M}$.
\end{theorem}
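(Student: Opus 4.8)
The plan is to prove Theorem~\ref{cw} in two directions, assembling the forward inclusion from the pieces already established and then verifying the easy converse by a direct geodesic computation.

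For the forward inclusion, suppose $\Gamma$ is a group of isometries of constant displacement on $\widetilde{M}$. The key point is that each $\gamma \in \Gamma$ lies in $\widetilde{G}^\dagger = I(\widetilde{M})$ by Theorem~\ref{isogroup}, so $\gamma \in \widetilde{G}\alpha$ for some $\alpha \in \Out(G,K_1)$ (recording also the $\Out(G,K_2)$ component, which the analysis will force to be trivial). I would split into the two cases handled by the earlier results. If $\gamma \in \widetilde{G} = \widetilde{G}^0 \cdot \Out(G,K_2)$ lies in the identity component lattice, Proposition~\ref{cw-conn} already gives $\gamma \in (Z_G \times r(K_2))$. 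If instead $\gamma$ lies in a nontrivial $\Out(G,K_1)$ component, I want to invoke Lemma~\ref{outers}; the one hypothesis to check there is that $\gamma^2$ is also of constant displacement. This is automatic because $\Gamma$ is a \emph{group}, so $\gamma^2 \in \Gamma$ and hence is itself an isometry of constant displacement. Thus Lemma~\ref{outers} applies and again yields $\gamma \in (Z_G \times r(K_2))$. Since this holds for every $\gamma \in \Gamma$, we conclude $\Gamma \subset (Z_G \times r(K_2))$.

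For the converse, suppose $\Gamma \subset (Z_G \times r(K_2))$ and let $\gamma = (z, r(k_2))$ with $z \in Z_G$ and $k_2 \in K_2$. The claim is that $\gamma$ moves every point of $\widetilde{M}$ the same distance. I would use the fact that $G$ is transitive on $\widetilde{M}$ and that $G$ centralizes both $Z_G$ (trivially) and $r(K_2)$ (since the right $K_2$-action commutes with the left $G$-action). Concretely, for any $x = gK_1 \in \widetilde{M}$, the displacement $d(x, \gamma x)$ equals $d(gK_1, zg k_2^{-1}K_1) = d(gK_1, g z k_2^{-1} K_1)$ using centrality of $z$; translating by $g^{-1} \in G$ (an isometry) this equals $d(1K_1, z k_2^{-1} K_1) = d(\widetilde{x_0}, \gamma \widetilde{x_0})$, independent of $x$. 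Hence $\gamma$ has constant displacement.

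The main obstacle is not in the converse, which is a short symmetry argument, but in confirming that the two cited results genuinely cover every component of $I(\widetilde{M})$ that a group element $\gamma$ can occupy. The subtlety is the interplay between the $\Out(G,K_1)$ and $\Out(G,K_2)$ labels: an element in a nontrivial $\Out(G,K_2)$ component but trivial $\Out(G,K_1)$ component still lies in $\widetilde{G}$ and is therefore handled by Proposition~\ref{cw-conn}, while Lemma~\ref{outers} is invoked precisely when the $\Out(G,K_1)$ component is nontrivial and shows that such $\alpha$ must in fact be inner. Care is needed to ensure that the reduction ``replace $\gamma$ by an $\Ad(G)$-conjugate preserving a fiber'' inside those lemmas is compatible with the group structure of $\Gamma$; but since both the conclusion $(Z_G \times r(K_2))$ and the property of constant displacement are $\Ad(G)$-invariant, this causes no difficulty and the two lemmas combine cleanly to give the full forward inclusion.
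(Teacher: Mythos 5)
Your proposal is correct and follows essentially the same route as the paper: Theorem \ref{isogroup} to place $\Gamma$ in $\widetilde{G}^\dagger$, Lemma \ref{outers} to eliminate the outer components (with the hypothesis on $\gamma^2$ supplied, as you note, by $\Gamma$ being a group), Proposition \ref{cw-conn} to land in $Z_G \times r(K_2)$, and the centralizer/transitivity argument for the converse. No substantive difference from the paper's proof.
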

\begin{proof} $\Gamma \subset I(\widetilde{M})$, so Theorem \ref{isogroup}
says $\Gamma \subset \widetilde{G}^\dagger =
{\bigcup}_{\alpha \in \Out(G,K_1,K_2)}\, \widetilde{G} \alpha$.  Lemma
\ref{outers} implies $\Gamma \subset \widetilde{G}$, and from Proposition
\ref{cw-conn} we conclude that $\Gamma \subset (Z_G \times r(K_2))$\,.
Conversely, if $\Gamma \subset (Z_G \times r(K_2))$ then $G$ centralizes
$\Gamma$ so every $\gamma \in \Gamma$ is of constant displacement.
\end{proof}

\begin{corollary}\label{cw-cover}
Let $\widetilde{M} \to \Gamma\backslash \widetilde{M}$ be a riemannian
covering whose deck transformation group $\Gamma$ consists of isometries
of constant displacement.  Then $\Gamma \subset (Z_G \times r(K_2))$ and
$\Gamma\backslash \widetilde{M}$ is homogeneous.
\end{corollary}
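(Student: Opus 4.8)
The plan is to derive Corollary~\ref{cw-cover} directly from Theorem~\ref{cw} together with the structure of $I^0(\widetilde{M})$ already established. The deck transformation group $\Gamma$ acts freely and properly discontinuously on $\widetilde{M}$, and by hypothesis every $\gamma \in \Gamma$ is an isometry of constant displacement. Hence $\Gamma$ is a group of constant--displacement isometries in the precise sense required by Theorem~\ref{cw}, and the first assertion $\Gamma \subset (Z_G \times r(K_2))$ is immediate from that theorem. The only thing left is to show that this inclusion forces $\Gamma\backslash\widetilde{M}$ to be homogeneous.

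For the homogeneity, the key observation is that elements of $Z_G \times r(K_2)$ are centralized inside $\widetilde{G}^0 = G \times r(K_2)$ by the transitive subgroup $G$. Indeed $G$ centralizes its own center $Z_G$ and commutes with the right action $r(K_2)$ by construction (this is exactly why $\widetilde{G}^0$ is a well--defined group in (\ref{g-tilde})). Therefore I would argue that $G$ descends to a transitive group of isometries on the quotient $\Gamma\backslash\widetilde{M}$. Concretely, since $\Gamma \subset Z_G \times r(K_2)$ and $G$ centralizes $Z_G \times r(K_2)$, the left $G$--action on $\widetilde{M}$ commutes with the $\Gamma$--action, so it passes to a well--defined action on the quotient. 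That descended action is transitive because the original $G$--action on $\widetilde{M} = G/K_1$ is transitive and the projection $\widetilde{M} \to \Gamma\backslash\widetilde{M}$ is surjective and $G$--equivariant. A transitive group of isometries exhibits $\Gamma\backslash\widetilde{M}$ as homogeneous.

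The main point requiring care is the verification that $G$ genuinely centralizes $\Gamma$, so that its action truly descends: one needs $g\gamma = \gamma g$ as transformations of $\widetilde{M}$ for every $g \in G$ and $\gamma \in \Gamma$. Writing $\gamma = (z, r(k_2))$ with $z \in Z_G$ and $k_2 \in K_2$, and using the action formula $(g, r(k_2)):xK_1 \mapsto gxk_2^{-1}K_1$ from (\ref{g-tilde}), this is a direct check: both $g\gamma$ and $\gamma g$ send $xK_1$ to $gzxk_2^{-1}K_1 = gxzk_2^{-1}K_1$, using $z \in Z_G$ and that the $G$--translation and the right $K_2$--translation commute. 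I expect this commutation check to be the only genuine obstacle, and it is routine; everything else is a formal consequence of transitivity of the descended $G$--action. I would close by remarking that this is precisely the ``if'' direction of the Homogeneity Conjecture for $\widetilde{M}$ in the equal--rank case, the converse (constant displacement being necessary) being the content of the theory developed earlier.
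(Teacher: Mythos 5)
Your proposal is correct and is essentially the argument the paper intends: the containment $\Gamma \subset (Z_G \times r(K_2))$ is a direct application of Theorem \ref{cw} (the standing hypothesis $\rank K = \rank G$ of Section \ref{sec5} being in force), and homogeneity follows because the transitive group $G$ centralizes $Z_G \times r(K_2)$ and hence descends to a transitive isometry group of $\Gamma\backslash\widetilde{M}$. The commutation check you single out is exactly the routine verification needed, so there is nothing to add.
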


\section{Isometries of Constant Displacement: Case $\chi(M) = 0$}
\label{sec6}
\setcounter{equation}{0}
In this section we study the cases where $\chi(M) = 0$, in other words
where $\rank K < \rank G$.  We know that the identity component
$I^0(\widetilde{M}) = G \times r(K_2)$ by Theorem \ref{ident-comp}.
We will prove the following analog of Proposition \ref{cw-conn} for 
$\rank K < \rank G$.  This uses an argument of C\' ampoli \cite{C1986}.

\begin{theorem}\label{cw-less}
Let $\pi: \widetilde{M} \to M$ as in {\rm (\ref{setup})} with
$\chi(M) = 0$.  If $\Gamma$ is a group of isometries of constant 
displacement on $\widetilde{M}$, and if $\Gamma \subset I^0(\widetilde{M})$,
then $\Gamma \subset (Z_G \times r(K_2))$.  Conversely if 
$\Gamma \subset (Z_G \times r(K_2))$ then every
$\gamma \in \Gamma$ is an isometry of constant displacement on $\widetilde{M}$.
\end{theorem}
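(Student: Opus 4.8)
The converse is immediate and identical to the corresponding assertion in Theorem \ref{cw}: if $\Gamma \subset (Z_G \times r(K_2))$ then $G$ centralizes every $\gamma \in \Gamma$, so each $\gamma$ has constant displacement on $\widetilde{M}$. Thus all the work is in the forward direction, and it suffices to treat a single $\gamma \in \Gamma$ and prove $\gamma \in (Z_G \times r(K_2))$.

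So fix $\gamma \in \Gamma$. Since $\Gamma \subset I^0(\widetilde{M}) = G \times r(K_2)$ by Theorem \ref{ident-comp}, write $\gamma = (g, r(k_2))$; we must show $g \in Z_G$. Conjugating $\gamma$ by a suitable element of $G \times r(K_2)$ — which preserves both ``constant displacement'' and the target subgroup $Z_G \times r(K_2)$, because $\Ad(G)$ fixes $Z_G$ and centralizes $r(K_2)$ — I would move $\gamma$ into a maximal torus $T \times r(T_2)$ of $I^0(\widetilde{M})$, so that $g = \exp \xi_1$ with $\xi_1 \in \gt$ and $k_2 = \exp \zeta$ with $\zeta \in \gt_2$, and $\gamma$ becomes the time-one flow of the Killing field $X = \xi_1 + dr(\zeta)$, whose two commuting summands are respectively ``horizontal'' and tangent to the fibers. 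Recall that $dr(\zeta) \in dr(\gk_2)$ has constant length by Corollary \ref{CK-fields}, and that $\widetilde{M}$ is a compact geodesic-orbit space with totally geodesic fibers by Lemma \ref{go-space}. Once $g \in Z_G$ is established for the conjugated $\gamma$ it holds for the original $\gamma$, and then $\Gamma \subset (Z_G \times r(K_2))$ follows.

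The heart of the matter, and where Proposition \ref{cw-conn} must be replaced, is that for $\chi(M) = 0$ Lemma \ref{fp-below} fails: $\gamma$ need not preserve any fiber, so one cannot restrict to a totally geodesic fiber and invoke the group-manifold classification as in Proposition \ref{cw-conn}. Here I would use the argument of C\'ampoli \cite{C1986}, and the plan is to show that $g$ acts on $M = G/K$ as an isometry of constant displacement. Concretely, I would run the flat-rectangle argument sketched before Proposition \ref{cw-conn}: the horizontal direction (the initial velocity $\nu \in \gm_1 = \gk_1^\perp$ of the minimizing geodesic from the base point to $\gamma$ of the base point) together with the vertical field $dr(\zeta)$ spans a flat, totally geodesic rectangle through each point, on which $r(k_2) = \exp(dr(\zeta))$ is the displacement along one (vertical) side while $\gamma$ is the displacement along the diagonal. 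Since the vertical displacement is constant (it equals the constant length of $dr(\zeta)$) and the diagonal displacement is the constant $c$, the Pythagorean relation in the flat rectangle forces the remaining horizontal side length to be constant; projecting along the Riemannian submersion $\pi : \widetilde{M} \to M$, that horizontal length is exactly the displacement of $g$ on $M$, so $g$ is an isometry of constant displacement of $M$. Now $M = G/K$ is normal homogeneous and, by Lemma \ref{no-nonzero}, carries no nonzero $G$-invariant vector field; C\'ampoli's theorem then forces such a constant-displacement $g \in G$ to lie in $Z_G$, whence $\gamma \in (Z_G \times r(K_2))$.

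The main obstacle I expect is making the flat-rectangle step rigorous precisely because the fiber is not preserved. One must verify that the rectangle $(t_1, t_2) \mapsto \exp(t_1 \xi_1 + t_2\, dr(\zeta)) \cdot p$ is genuinely flat and totally geodesic — using the natural reductivity of $\widetilde{M}$ and the total geodesy of the fibers from Lemma \ref{go-space} — and, crucially, that the horizontal side descends to a \emph{minimizing} displacement geodesic on $M$ rather than merely yielding an inequality under the distance-nonincreasing submersion $\pi$. Controlling the $\gk_1$-component of $\xi_1$, which is exactly what the invariant fiber handled automatically in the equal-rank case, is the delicate point that C\'ampoli's geodesic and second-variation analysis is designed to overcome.
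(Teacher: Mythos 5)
Your converse direction and your initial reduction (conjugate $\gamma$ into a torus and write the direction of the minimizing geodesic from $1K_1$ to $\gamma(1K_1)$ as $\xi_1 + dr(\xi_2)$ with $\xi_1 \perp \gk_1$) agree with the paper. But the core of your forward argument has a genuine gap, in two places. First, your final step invokes ``C\'ampoli's theorem'' as if it said that a constant-displacement isometry $g \in G$ of a normal homogeneous $M = G/K$ with no nonzero invariant vector field must lie in $Z_G$. No such general theorem exists: \cite{C1986} treats only the real Stiefel manifolds, and the statement you need for an arbitrary base satisfying (\ref{setup}) with $\chi(M) = 0$ is essentially the kind of classification this paper is in the business of proving. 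Reducing the problem on $\widetilde{M}$ to the same unsolved problem on $M$ is circular except in the symmetric-base case of Section \ref{sec7}. Second, the flat-rectangle/Pythagoras step is only justified at the base point: at a general $p = xK_1$ the curve $t \mapsto \exp(t(\xi_1 + dr(\zeta)))p$ is in general neither a geodesic nor of speed $||\xi_1 + dr(\zeta)||$, since the Killing field $\xi_1$ does not have constant length, so you cannot conclude that the horizontal displacement is constant over $M$ this way (as you yourself suspect).

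The paper replaces both steps by a single Weyl-group argument at the base point. If $g$ has a fixed point on $M$ one falls back on the argument of Proposition \ref{cw-conn}; otherwise $\xi_1 \ne 0$ lies in $\gt'_1$, where $\gt_1 = \gt'_1 \oplus \gt''_1$ is a Cartan subalgebra of $\gg$ with $\gt''_1$ a Cartan subalgebra of $\gk_1$ and $\gt'_1 \perp \gk_1$. For each $w \in W(G,T_1)$ the conjugate of $\gamma$ has the same constant displacement, which is computed from $||p(w(\xi_1)) + dr(\xi_2)||$ with $p : \gt_1 \to \gt'_1$ the orthogonal projection; orthogonality to $dr(\xi_2)$ yields $||\xi_1|| = ||p(w(\xi_1))||$, hence $w(\xi_1) \in \gt'_1$ for every $w$, and irreducibility of the Weyl group action on $\gt_1$ forces $\xi_1 = 0$, a contradiction. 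That Weyl-group conjugation is the missing idea: it is exactly what controls the $\gk_1$-component that you correctly identify as the delicate point, and your outline never uses it.
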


\begin{proof} Let $\gamma = (g,r(k_2) \in \Gamma$.  It descends to an
isometry $g$ of $M$.  If $g$ has a fixed point on $M$, in other words if 
it preserves a fiber of $\pi: \widetilde{M} \to M$, then the argument of 
Proposition \ref{cw-conn} proves $\gamma \in Z_G \times r(K_2)$.
Now suppose that $g$ does not have a fixed point on $M$.
\medskip

Let $t \to \sigma(t)$ denote the minimizing geodesic in $\widetilde{M}$
from $1K_1$ to $\gamma(1K_1)$.  Then $\sigma(t) = \exp(t\xi)(1K_1)$
where $\xi = \xi_1 + dr(\xi_2)$ with $\xi_1 \in \gg$, $\xi_1 \perp \gk_1$\,,
and $\xi_2 \in \gk_2$.  Here $\xi_1$ belongs to the Lie algebra $\gt_1$
of a maximal torus $T_1$ of $G$ such that $\gt_1 = \gt'_1 + \gt''_1$ where
$\xi \in \gt'_1$\,, $\gt'_1 \perp \gk_1$\,, and $\gt''_1$ is the Lie
algebra of a maximal torus of $K_1$\,.  The isometry $\gamma$ has constant
displacement equal to $||\xi_1 + dr(\xi_2)||$.  Note that $\xi_1 \ne 0$
because $g$ does not have a fixed point on $M$.
\medskip

Every conjugate of $\gamma$ has the same constant displacement.  In
particular if $w$ belongs to the Weyl group $W(G,T_1)$ then
$||\xi_1 + dr(\xi_2)|| = ||p(w(\xi_1)) + dr(\xi_2)||$ where
$p : \gt_1 \to \gt'_1$ is orthogonal projection.  As $\xi_1 \perp
dr(\xi_2) \perp p(w(\xi_1))||$ it follows that $||\xi_1|| = ||p(w(\xi_1))||$.
From that, $w(\xi_1) \in \gt'_1$ for every $w \in W(G,T_1)$.  But
$W(G,T_1)$ acts irreducibly on $\gt_1$ so it cannot preserve the subspace
$\gt'_1$\,.  This implies $\xi_1 = 0$.  That contradicts the
assumption that $g$ has no fixed point on $M$.  In other words,
$\gamma \in (Z_G \times r(K_2))$, as asserted.
The theorem follows.
\end{proof}

In the next section we look at the special case of Stieffel manifold
fibrations over odd dimensional Grassmann manifolds.

\section{Isotropy--Split Bundles over Odd Real Grassmannians}
\label{sec7}
\setcounter{equation}{0}
In this section we extend the theory described in Sections \ref{sec3},
\ref{sec5} and \ref{sec6} to include isotropy--split bundles 
$\pi: \widetilde{M} \to M$
where the base $M$ is an irreducible compact riemannian symmetric space 
$G/K$ such that $\rank G > \rank K$.  According to the classification, the 
only such $G/K$ are
{\footnotesize
$$
SU(n)/SO(n), SU(2n)/Sp(n), 
SO(2s+2+2t)/[SO(2s + 1)\times SO(1+2t)], E_6/F_4, E_6/Sp(4), 
(K\times K)/diag(K).
$$
}
Note that $SU(4)/SO(4) = SO(6)/[SO(3) \times SO(3)]$.  Thus the only such
symmetric spaces $G/K$ that satisfy (\ref{setup}) are the oriented
real Grassmann manifolds $SO(2s+2t+2)/[SO(2s + 1)\times SO(2t+1)]$ of odd
real dimension.  Thus
we look at
\begin{equation}\label{odd-fibr}
\begin{aligned}
\pi: \widetilde{M} \to M &\text{ given by } G/K_1 \to G/K_1K_2 \text{ where }\\
     &G = SO(2s+2+2t), K_1 = SO(2s + 1) \text{ and } K_2 =  SO(1+2t).
\end{aligned}
\end{equation}
The odd spheres are completely understood (\cite{W1961} and \cite{W1966}),
and in any case they do not lead to isotropy--split fibrations, so
we put those cases aside and assume $s, t > 0$.
\medskip

Theorem \ref{ident-comp} and the first three statements of Corollary
\ref{CK-fields} are valid here.
We still have the relative groups (\ref{outer}) and the isometry groups
(\ref{daggers}), but neither $K_1$ nor $K_2$ has an 
outer automorphism.  However, following Cartan, the symmetry $s$ at the 
base point $1K$ of $G/K$ gives another component of $I(\widetilde{M})$.  In
fact it is clear that 
$s = \left ( \begin{smallmatrix} I_{2s+1} & 0 \\ 0 & -I_{1+2t} 
\end{smallmatrix} \right )$.  Thus

\begin{proposition}\label{iso-st}
The full isometry group $I(\widetilde{M})$ is the $2$--component group
$O(2s+2+2t) \times r(SO(1+2t))$.  The set of all constant length Killing 
vector fields on $\widetilde{M}$ is $dr(\gs\go(1+2t))$. 
\end{proposition}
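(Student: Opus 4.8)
The plan is to establish Proposition \ref{iso-st} in two movements: first pin down the full isometry group $I(\widetilde{M})$, and then read off the constant-length Killing fields from that description together with Corollary \ref{CK-fields}. For the isometry group, I would begin exactly as in Theorem \ref{isogroup}, writing $I(\widetilde{M}) = G\widetilde{H}$ where $\widetilde{H}$ is the isotropy subgroup at $\widetilde{x_0} = 1K_1$. The identity component $I^0(\widetilde{M}) = \widetilde{G}^0 = SO(2s+2+2t) \times r(SO(1+2t))$ is already delivered by Theorem \ref{ident-comp}, so the entire content is to count the extra components of $\widetilde{H}$. As the text observes, neither $K_1 = SO(2s+1)$ nor $K_2 = SO(1+2t)$ admits an outer automorphism (the orthogonal groups $SO(\text{odd})$ have trivial outer automorphism group), so the relative groups $\Out(G,K_1)$ and $\Out(G,K_2)$ from (\ref{outer}) contribute nothing beyond inner automorphisms. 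The one genuinely new component comes, following Cartan, from the geodesic symmetry $s$ at the base point $1K$ of the symmetric space $M = G/K$, realized as the block-diagonal matrix $s = \left(\begin{smallmatrix} I_{2s+1} & 0 \\ 0 & -I_{1+2t}\end{smallmatrix}\right)$.

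The key verifications for the first movement are: (a) that $s$ is indeed an isometry of $\widetilde{M}$ normalizing $r(K_2)$ (so it lives in $I(\widetilde{M})$ by Corollary \ref{ck2}), and (b) that $s \notin I^0(\widetilde{M})$. For (a), conjugation by $s$ preserves $K_1 = SO(2s+1)$ (acting trivially on that block) and preserves $K_2 = SO(1+2t)$ (acting by $k_2 \mapsto s k_2 s^{-1}$, which is an automorphism of $SO(1+2t)$), so $s$ descends to the standard symmetry of $M$ and sends fibers to fibers. For (b), I would note that $\det s = (-1)^{1+2t} = -1$, so $s$ lies in the nonidentity component of $O(2s+2+2t)$; since $G = SO(2s+2+2t)$ is exactly the identity component and the induced map $I^0(\widetilde{M}) \to I^0(M)$ has image the connected group generated by $G$, the element $s$ cannot be in $I^0(\widetilde{M})$. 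Combining, $I(\widetilde{M}) = (G \cup Gs) \times r(SO(1+2t)) = O(2s+2+2t) \times r(SO(1+2t))$, a $2$-component group. The main obstacle here is arguing that there are \emph{no further} components: one must be sure that every element of $\widetilde{H}$ is accounted for by $K_1 \cdot \mathrm{diag}(K_2)$ together with $s$, which rests on the absence of outer automorphisms of $K_1$ and $K_2$ and on the degenerate form of Lemma \ref{isotropy} adapted to the non-equal-rank setting (the framework of Section \ref{sec4}).

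For the second movement, I would invoke Corollary \ref{CK-fields}, whose first three statements the text asserts remain valid in this non-equal-rank case. The set of constant-length Killing fields is $\gl \oplus dr(\gk_2)$ with $\gl = \{\xi \in \gg \mid \xi \text{ defines a constant length Killing field on } M\}$. Thus it remains to show $\gl = 0$, i.e. that the symmetric space $M = SO(2s+2t+2)/[SO(2s+1)\times SO(2t+1)]$ carries no nonzero constant-length Killing field arising from $\gg = \gso(2s+2t+2)$. Since $M$ is an irreducible riemannian symmetric space, I would appeal to the classical fact that an irreducible symmetric space of compact type admits nonzero Clifford--Wolf (constant-length Killing) vector fields only when it is an odd-dimensional sphere or an odd-dimensional real projective space; the oriented Grassmannian $SO(2s+2t+2)/[SO(2s+1)\times SO(2t+1)]$ with $s,t>0$ is none of these, so $\gl = 0$. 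This forces the constant-length Killing fields on $\widetilde{M}$ to be exactly $dr(\gk_2) = dr(\gso(1+2t))$, which is the stated conclusion.

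I expect the main obstacle to be the second movement, namely the vanishing $\gl = 0$. In the equal-rank case Corollary \ref{CK-fields}(4) dispatches this cheaply via the Euler characteristic ($\chi(M)>0$ forces a zero of the field), but here $\chi(M) = 0$ and that shortcut is unavailable. One must instead rule out constant-length Killing fields on $M$ directly. The cleanest route is the Berestovskii--Nikonorov style classification of Clifford--Wolf homogeneous and constant-length-field-carrying symmetric spaces, or alternatively a direct flat-rectangle / Weyl-group argument in the spirit of the proof of Theorem \ref{cw-less}: if $\xi \in \gt_1$ gave a constant-length field on $M$, then irreducibility of the $W(G,T_1)$-action on $\gt_1$ together with the projection estimate used there would force $\xi = 0$. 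I would present the Weyl-group argument as the primary line since it is self-contained within the paper's methods and avoids citing the full symmetric-space classification of constant-length fields.
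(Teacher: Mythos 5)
Your first movement reproduces the paper's (very terse) argument for the isometry group and is essentially correct: the identity component comes from Theorem \ref{ident-comp}, $SO({\rm odd})$ has no outer automorphisms, and the symmetry $s=\diag(I_{2s+1},-I_{1+2t})$ with $\det s=-1$ supplies exactly one further component. One caution: this extra component is \emph{not} detected by outer automorphisms of $K_1$ or $K_2$ (conjugation by $s$ is trivial on both factors, since $-I_{1+2t}$ is central in $O(1+2t)$), so the equal--rank dichotomy of Lemma \ref{inn-out} and the maximal--rank framework of Section \ref{sec4} do not literally apply here; what one actually uses is that $\Out(G,K_1)$ in the sense of (\ref{outer}) is $\Z_2$, generated by $\Ad(s)$, because $\rank K<\rank G$ breaks the "inner on $K$ $\Rightarrow$ in $I^0$" implication.

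The genuine gap is in your primary argument that $\gl=0$. The projection estimate of Theorem \ref{cw-less} works because there the constant displacement equals the \emph{full} norm $\|\xi_1+dr(\xi_2)\|$ of the initial vector of a minimizing geodesic, so $\|p(w\xi_1)\|=\|\xi_1\|=\|w\xi_1\|$ saturates the projection inequality and forces $w\xi_1\in\gt_1'$. For a Killing vector field on $M$ the constant is only $\|{\rm proj}_{\gm}\xi\|$, which is strictly smaller than $\|\xi\|$ unless $\xi\in\gm$, so the inequality is not saturated and the Weyl--orbit test proves nothing. Concretely: identify $\gt\cong\R^{s+t+1}$ via $\diag(R(\theta_1),\dots,R(\theta_{s+t+1}))$; then $\gt\cap\gm=\R e_{s+1}$ and the Weyl group $W(D_{s+t+1})$ carries $e_{s+1}$ onto $\{\pm e_i\}$, so constancy of $\|{\rm proj}_{\gt\cap\gm}(w\xi)\|$ over $w\in W$ says only that $|\xi_1|=\dots=|\xi_{s+t+1}|$. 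The vector $\xi=(1,\dots,1)$ (an orthogonal complex structure $J$ on $\R^{2s+2t+2}$) passes this test but is not in $\gl$: one only sees that its Killing field is non--constant by evaluating at points $gK$ with $g\notin N_G(T)$, e.g.\ comparing a $(2s+1)$--plane containing a $2s$--dimensional $J$--invariant subspace with one that $J$ maps entirely into its orthogonal complement (possible when $s\leqq t$). So the Weyl--group argument cannot be the primary line. Your first route is the one that closes the gap: invoke the classification of Clifford--Wolf translations and constant--length Killing fields on compact irreducible symmetric spaces (\cite{W1962}, \cite{BN2008a}; cf.\ C\'ampoli \cite{C1986} for exactly these Stieffel fibrations), under which the oriented Grassmannian with $s,t>0$ admits none; alternatively carry out the rotation--normal--form computation just indicated at non--Weyl points.
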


\begin{lemma}\label{fp-symm}
Every element of $sI^0(\widetilde{M})$ has a fixed point on $M$.
\end{lemma}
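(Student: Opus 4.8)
The plan is to show that any isometry in the non-identity component $sI^0(\widetilde{M})$, when pushed down to $M$, has a fixed point, and then lift that fixed point back up to a preserved fiber. By Proposition \ref{iso-st}, the full isometry group is the two-component group $O(2s+2+2t) \times r(SO(1+2t))$, so an element of $sI^0(\widetilde{M})$ has the form $\gamma = (g, r(k_2))$ with $g \in O(2s+2+2t)$ lying in the non-identity component, i.e. $\det g = -1$. By Corollary \ref{ck2} every isometry of $\widetilde{M}$ sends fiber to fiber, so $\gamma$ descends to an isometry $\bar\gamma$ of the base $M = G/K$; showing $\gamma$ has a fixed fiber is equivalent to showing $\bar\gamma = g$ acting on $M$ has a fixed point. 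So the whole problem reduces to a statement about the action of an orientation-reversing orthogonal transformation on the oriented real Grassmannian $M = SO(2s+2t+2)/[SO(2s+1)\times SO(2t+1)]$.

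First I would note that $M$ is the Grassmannian of oriented $(2s+1)$-planes in $\R^{2s+2t+2}$, with $O(2s+2t+2)$ acting naturally. An element $g$ with $\det g = -1$ has a fixed point on $M$ precisely when there is an oriented $(2s+1)$-plane $V$ that $g$ maps to itself preserving orientation. The concrete approach is an eigenspace/invariant-subspace argument: a real orthogonal matrix $g$ on the even-dimensional space $\R^{2s+2t+2}$ decomposes the space into an orthogonal sum of $g$-invariant lines (eigenvalues $\pm1$) and $g$-invariant planes (on which $g$ acts as a rotation by a nontrivial angle). Since $\det g = -1$ and the dimension is even, the multiplicity of the eigenvalue $-1$ must be odd, so in particular $-1$ is an eigenvalue and, combined with the rotation planes, there is enough flexibility to assemble a $g$-invariant subspace of the required odd dimension $2s+1$. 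The key point is that $g$ permits a $g$-invariant orthogonal decomposition into pieces of dimensions $1$ and $2$, and one can select a sub-collection summing to $2s+1$.

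The main obstacle, and the step requiring genuine care, is to verify that among these invariant pieces one can always choose a sub-collection of total dimension exactly $2s+1$ \emph{and} arrange the induced orientation to be preserved by $g$, so that the chosen plane is an honest fixed point in the \emph{oriented} Grassmannian rather than merely in the unoriented one. Because $g$ has at least one $(-1)$-eigenvector, one has a one-dimensional invariant piece available, and the two-dimensional rotation blocks can be used as adjustable units of dimension $2$; since $2s+1$ is odd and one odd-dimensional unit is present, a valid selection exists by a parity-matching count. For the orientation: on any $g$-invariant subspace $V$ the restriction $g|_V$ is orthogonal, and one checks that $V$ can be chosen so that $\det(g|_V) = +1$, which guarantees $g$ preserves the orientation of $V$ and hence fixes the corresponding point of the oriented Grassmannian. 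Alternatively, and more cleanly, I would invoke Lemma \ref{fp-below}'s philosophy: $\gamma$ lies in the compact group $O(2s+2t+2) \times r(SO(1+2t))$, and one can conjugate $g$ into a standard block-diagonal normal form, reducing the orientation bookkeeping to an explicit finite case-check on the signs of the $1$- and $2$-dimensional blocks. The remaining step, lifting the fixed point $xK \in M$ of $g$ to the statement that $\gamma$ preserves the fiber $xF = \pi^{-1}(xK)$, is then immediate from Corollary \ref{ck2}.
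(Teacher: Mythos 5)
Your proposal is correct and follows essentially the same route as the paper: reduce to the action of an orientation--reversing orthogonal matrix on the oriented Grassmannian, put it in block--diagonal normal form $\diag\{R(\theta_1),\dots,R(\theta_s),1,-1,R(\theta'_1),\dots,R(\theta'_t)\}$, and exhibit an invariant oriented $(2s+1)$--plane. The one place to tighten the write--up: the odd--dimensional invariant piece you should attach to the $s$ rotation blocks is the $+1$--eigenvector, not the $-1$--eigenvector you emphasize (both exist with odd multiplicity since $\det = -1$ in even dimension), because only the former gives $\det(g|_V)=+1$ and hence a fixed point of the \emph{oriented} Grassmannian.
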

\begin{proof} If $g \in I^0(\widetilde{M})$ then the matrix $sg$ has
determinant $-1$.  Let $R(\theta)$ denote the rotation matrix
$\left ( \begin{smallmatrix} \cos(\theta) & \sin(\theta) \\
-\sin(\theta) & \cos(\theta) \end{smallmatrix} \right )$. Then 
$\R^{2s+2+2t}$ has an orthonormal basis $\{e_i\}$
in which $sg$ has matrix 
$$
\diag\{R(\theta_1), \dots ,  R(\theta_s),1,
-1,R(\theta'_1), \dots , R(\theta'_t)\}.
$$  
Now $\Span\{e_1, \dots , e_{2s+1}\}$ is the fixed point.
\end{proof}

Combining Theorem \ref{cw-less} with Proposition \ref{iso-st} and 
Lemma \ref{fp-symm} we have

\begin{theorem}\label{cw-st} 
Let $\pi: \widetilde{M} \to M$ as in {\rm (\ref{odd-fibr})}.  If $\Gamma$
is a group of isometries of constant displacement on the Stieffel manifold
$\widetilde{M}$ then $\Gamma \subset (\{\pm I\} \times r(SO(1+2t)))$.
Conversely if $\Gamma \subset (\{\pm I\} \times r(SO(1+2t)))$ then every
$\gamma \in \Gamma$ is an isometry of constant displacement on $\widetilde{M}$.
\end{theorem}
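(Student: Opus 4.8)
The plan is to combine the three results just cited so that Theorem \ref{cw-less} reduces the problem to a concrete matrix calculation on the Stieffel manifold. First I would recall that $\widetilde{M} = SO(2s+2+2t)/SO(2s+1)$ has full isometry group $I(\widetilde{M}) = O(2s+2+2t) \times r(SO(1+2t))$ by Proposition \ref{iso-st}. This group has exactly two components relative to the factor $O(2s+2+2t)$: the identity component $I^0(\widetilde{M}) = SO(2s+2+2t) \times r(SO(1+2t))$, and the other component $sI^0(\widetilde{M})$, where $s$ is the symmetry $\diag(I_{2s+1}, -I_{1+2t})$. So any group $\Gamma$ of constant-displacement isometries splits according to which component each element lies in, and the whole argument is to handle the two components separately.

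For the identity component, I would apply Theorem \ref{cw-less} directly. Since $\chi(M) = 0$ here (because $\rank K = 2(s+t)+1 > \rank G$ is false — rather $\rank G = s+t+1 > \rank K = s+t$), the theorem tells me that any $\gamma \in \Gamma \cap I^0(\widetilde{M})$ of constant displacement lies in $Z_G \times r(K_2)$. The center of $SO(2s+2+2t)$ is $\{\pm I\}$ since the ambient dimension $2s+2+2t$ is even, so this already gives $\gamma \in \{\pm I\} \times r(SO(1+2t))$, which is the asserted conclusion for elements in the identity component.

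The remaining work — and what I expect to be the main obstacle — is ruling out any contribution from the non-identity component $sI^0(\widetilde{M})$. Here I would invoke Lemma \ref{fp-symm}: every element $\gamma$ of $sI^0(\widetilde{M})$ has a fixed point on $M$, equivalently preserves some fiber $xF$ of $\pi$. Given that, I can run the fiber-preserving branch of the argument from Proposition \ref{cw-conn}: after an $\Ad(G)$--conjugation I may assume $\gamma$ preserves the base fiber $F$, so $\gamma|_F$ has constant displacement on the totally geodesic fiber $F$, and the classification of constant-displacement isometries of the group manifold $F$ (via \cite{W1962}, as used in Proposition \ref{cw-conn}) forces $\gamma$ into $Z_G \times r(K_2)$. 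But any element genuinely in $sI^0(\widetilde{M})$ has $O$--component of determinant $-1$, whereas $Z_G \times r(K_2) \subset SO(2s+2+2t) \times r(SO(1+2t))$ has determinant $+1$; this contradiction shows $\Gamma$ meets only the identity component. The delicate point is verifying that the fixed-fiber reduction of Proposition \ref{cw-conn} really does transplant to elements of the outer component without an inner-automorphism obstruction, but Lemma \ref{fp-symm} is precisely engineered to supply the needed fixed fiber.

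Finally I would dispatch the converse, which is immediate: if $\Gamma \subset \{\pm I\} \times r(SO(1+2t))$, then the transitive isometry group $G = SO(2s+2+2t)$ centralizes every $\gamma \in \Gamma$ (as $\{\pm I\}$ is central and $r(SO(1+2t))$ commutes with the left $G$--action), so each $\gamma$ moves every point the same distance and hence is of constant displacement. This mirrors the converse in Theorem \ref{cw} and requires no new computation.
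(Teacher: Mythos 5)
Your proposal is correct and follows essentially the same route as the paper, which proves Theorem \ref{cw-st} precisely by combining Theorem \ref{cw-less} (for the identity component, where $\chi(M)=0$), Proposition \ref{iso-st} (for the two--component structure of $I(\widetilde{M})$), and Lemma \ref{fp-symm} (to supply the invariant fiber that rules out the component $sI^0(\widetilde{M})$ via the fiber--preserving branch of the earlier arguments). Your write--up in fact makes explicit the determinant contradiction and the converse, both of which the paper leaves implicit; the only blemish is the garbled parenthetical about ranks, though your conclusion $\rank G = s+t+1 > \rank K = s+t$ is correct.
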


Now, as in Corollary \ref{cw-cover}, we have

\begin{corollary}\label{cw-cover-st}
Let $\pi: \widetilde{M} \to M$ be the isotropy-split fibration 
{\rm (\ref{odd-fibr})}.  
Let $\widetilde{M} \to \Gamma\backslash \widetilde{M}$ be a riemannian
covering whose deck transformations $\gamma \in \Gamma$ 
have constant displacement.  Then 
$\Gamma \subset (\{\pm I\} \times r(SO(1+2t)))$ and 
$\Gamma\backslash \widetilde{M}$ is a riemannian homogeneous space.
\end{corollary}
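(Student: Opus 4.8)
The plan is to derive both assertions from Theorem \ref{cw-st} with essentially no extra work, following the pattern of Corollary \ref{cw-cover}. First I would invoke the standard fact that the deck transformations of a riemannian covering act as isometries of the covering space $\widetilde{M}$. Since by hypothesis each $\gamma \in \Gamma$ has constant displacement, $\Gamma$ is a group of constant-displacement isometries on $\widetilde{M}$, and Theorem \ref{cw-st} then immediately yields the inclusion $\Gamma \subset (\{\pm I\} \times r(SO(1+2t)))$, which is the first assertion.

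For the homogeneity claim, the key observation is that the transitive group $G = SO(2s+2+2t)$ centralizes $\Gamma$. Indeed, the left $G$-action and the right action $r(SO(1+2t))$ commute by construction, see (\ref{g-tilde}), and the scalar subgroup $\{\pm I\}$ is central in $G$; hence every element of $\{\pm I\} \times r(SO(1+2t))$ commutes with every $g \in G$, and since $\Gamma$ lies in this product group, $G$ centralizes $\Gamma$.

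Next I would use this centralization to push the $G$-action down to the quotient. Because $g\gamma = \gamma g$ for all $g \in G$ and $\gamma \in \Gamma$, each $g$ carries $\Gamma$-orbits to $\Gamma$-orbits and so descends to a well-defined isometry $\bar g$ of $\Gamma\backslash \widetilde{M}$; the assignment $g \mapsto \bar g$ is an action of $G$ by isometries on the quotient. Since $G$ is already transitive on $\widetilde{M}$, the induced action is transitive on $\Gamma\backslash \widetilde{M}$, so the quotient is a riemannian homogeneous space.

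There is no genuine obstacle here: the substantive content is already packaged in Theorem \ref{cw-st}, and the homogeneity step is the same centralizer-descends-to-the-quotient reasoning used in Corollary \ref{cw-cover}. The only point deserving a moment's attention is the routine verification that $\Gamma\backslash \widetilde{M}$ inherits a smooth riemannian structure for which the descended $G$-action is isometric, and this is immediate from the general theory of riemannian coverings together with the commutation of $G$ with $\Gamma$.
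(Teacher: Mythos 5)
Your proposal is correct and follows essentially the same route as the paper, which states the corollary as an immediate consequence of Theorem \ref{cw-st} in exact parallel with Corollary \ref{cw-cover}: the constant-displacement hypothesis forces $\Gamma \subset (\{\pm I\} \times r(SO(1+2t)))$, and since that group is centralized by the transitive group $G$, the $G$-action descends to a transitive isometric action on $\Gamma\backslash\widetilde{M}$.
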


\section{Applications to the Homogeneity Conjecture}
\label{sec8}
\setcounter{equation}{0}
The background to the Homogeneity Conjecture consists of three
papers from the 1960's 
concerning riemannian coverings $S \to \Gamma \backslash S$ where
$S$ is a riemannian homogeneous space.  The first one, \cite{W1960},
studies the case where $S$ has constant sectional curvature and
classifies the quotients $\Gamma \backslash S$ that are riemannian
homogeneous.  There it is shown that if $\Gamma \backslash S$ is
homogeneous then every $\gamma \in \Gamma$ is of constant displacement.
The second one, \cite{W1961}, also for the case of constant sectional
curvature, shows that if every $\gamma \in \Gamma$ is of constant displacement
then $\Gamma \backslash S$ is homogeneous.  The third one, \cite{W1962},
extends these results to symmetric spaces: Let $S$ be a connected simply 
connected riemannian symmetric space and  $S \to \Gamma \backslash S$ a
riemannian covering; then $\Gamma \backslash S$ is homogeneous if and
only if every $\gamma \in \Gamma$ is of constant displacement.  Thus 
\medskip

\noindent {\bf Homogeneity Conjecture.}
{\sl Let $S$ be a connected simply
connected riemannian homogeneous space and  $S \to \Gamma \backslash S$ a
riemannian covering.  Then $\Gamma \backslash S$ is homogeneous if and
only if every $\gamma \in \Gamma$ is of constant displacement.}
\medskip

We note that one direction of the Homogeneity Conjecture is easy: If
$\Gamma \backslash S$ is homogeneous then $I^0(\Gamma \backslash S)$ lifts
to a subgroup $G$ of $I(S)$, and $G$ normalizes $\Gamma$ by construction.
Then $G$ centralizes $\Gamma$ because $G$ is connected and $\Gamma$ is
discrete.  Also, $G$ is transitive on $S$ because $\Gamma \backslash S$ is 
homogeneous.  If $x, y \in S$ and $\gamma \in \Gamma$ choose $g \in G$ with
$y = gx$.  Then the displacement $\delta_\gamma(x) = {\rm dist}(x,\gamma x)
= {\rm dist}(gx,g\gamma x) = {\rm dist}(gx, \gamma gx) = {\rm dist}(y,\gamma y)
= \delta_\gamma(y)$.  Thus if $\Gamma \backslash S$ is homogeneous then
every $\gamma \in \Gamma$ is of constant displacement.  The hard part is
the converse.
\medskip

Since the Homogeneity Conjecture was proved for $S$ riemannian symmetric,
some other cases of the conjecture have been proved.  The latest cases
are those of Corollaries \ref{cw-cover} and
\ref{cw-cover-st}:

\begin{theorem}\label{conj-isotropy--split}
Let $\widetilde{M} \to M$ be an isotropy--splitting fibration 
and let $\widetilde{M} \to \Gamma\backslash \widetilde{M}$ be a riemannian
covering.  Suppose that $\rank K = \rank G$, or that $M$ is a riemannian 
symmetric space, or that $\Gamma \subset I^0(\widetilde{M})$.  
Then $\Gamma\backslash \widetilde{M}$ is homogeneous if and only
if every $\gamma \in \Gamma$ is an isometry of constant displacement.
\end{theorem}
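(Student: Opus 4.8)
The plan is to split the biconditional and treat the forward implication (homogeneous $\Rightarrow$ constant displacement) as the short one, since it is the general, structure-free half of the Homogeneity Conjecture already recorded above. For it I would simply reuse that argument: if $\Gamma\backslash\widetilde{M}$ is homogeneous, then $I^0(\Gamma\backslash\widetilde{M})$ lifts to a connected group $G'$ of isometries of $\widetilde{M}$ that normalizes $\Gamma$ by construction, hence centralizes $\Gamma$ because $G'$ is connected and $\Gamma$ is discrete, and that is transitive on $\widetilde{M}$; the distance computation $\delta_\gamma(gx) = \delta_\gamma(x)$ then shows every $\gamma \in \Gamma$ has constant displacement. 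Nothing specific to the isotropy-split setting is needed here.

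The substance is the converse, and my strategy would be uniform across the three hypotheses: first force $\Gamma$ into $Z_G \times r(K_2)$, then read off homogeneity. Each reduction is precisely the content of an earlier result. If $\rank K = \rank G$, then Theorem \ref{cw} (packaged as Corollary \ref{cw-cover}) gives $\Gamma \subset (Z_G \times r(K_2))$ directly from the constant-displacement hypothesis. If instead $\Gamma \subset I^0(\widetilde{M})$, then Theorem \ref{cw-less} yields $\Gamma \subset (Z_G \times r(K_2))$ when $\chi(M)=0$, while the equal-rank subcase again falls to Theorem \ref{cw}. The remaining hypothesis, $M$ a riemannian symmetric space, I would dispatch through the classification of Section \ref{sec2}: if $\rank K = \rank G$ it is already covered, and the only irreducible compact symmetric $M$ with $\rank K < \rank G$ satisfying (\ref{setup}) are the odd real Grassmannians, whose isotropy-split total spaces are the Stieffel manifolds of (\ref{odd-fibr}); for those Corollary \ref{cw-cover-st} gives $\Gamma \subset (\{\pm I\} \times r(SO(1+2t)))$, the appropriate analog of $Z_G \times r(K_2)$.

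Once $\Gamma$ lies in $Z_G \times r(K_2)$ (or its Stieffel counterpart), homogeneity follows formally: the left action of $G$ on $\widetilde{M}$ commutes with both the center $Z_G$ and the right action $r(K_2)$, so $G$ centralizes $\Gamma$ and therefore descends to a group of isometries of $\Gamma\backslash\widetilde{M}$ that remains transitive, $G$ being already transitive on $\widetilde{M}$. Thus $\Gamma\backslash\widetilde{M}$ is homogeneous. In effect the proof is an assembly of Corollaries \ref{cw-cover} and \ref{cw-cover-st} together with Theorem \ref{cw-less}, organized by a case split on the hypotheses.

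I expect the genuine mathematical obstacle to lie entirely upstream of this theorem, in the flat-rectangle argument combined with the irreducibility of the Weyl-group action on $\gt$ (resp.\ $\gt_1$) exploited in Proposition \ref{cw-conn}, Lemma \ref{outers}, and Theorem \ref{cw-less}; that is what actually excludes displacement in the $\gg$-directions transverse to the fiber. Within the present theorem the only real care needed is bookkeeping: verifying that the three hypotheses jointly cover every possibility without gaps (the symmetric and the $\Gamma \subset I^0$ cases both overlap the equal-rank case), and confirming that the symmetric-space hypothesis truly reduces, via the classification, to the two tractable families handled by Corollaries \ref{cw-cover} and \ref{cw-cover-st}.
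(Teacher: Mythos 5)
Your proposal is correct and follows essentially the same route as the paper, which presents Theorem \ref{conj-isotropy--split} precisely as the assembly of the easy direction (recorded just before the Homogeneity Conjecture statement in Section \ref{sec8}) with Corollaries \ref{cw-cover} and \ref{cw-cover-st} and Theorem \ref{cw-less}, the case split on the three hypotheses being exactly the bookkeeping you describe. The final step — that $\Gamma \subset (Z_G \times r(K_2))$ forces homogeneity of the quotient because the transitive group $G$ centralizes $\Gamma$ and so descends — is also the paper's argument.
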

\noindent

Now we try to describe the broader mathematical context of 
Theorem \ref{conj-isotropy--split}.  There are five lines of research there:
(i) decreasing the number of case by case verifications of \cite{W1962}, 
(ii) dealing with nonpositive curvature and bounded isometries, 
(iii) additional special cases where $S$ is compact, 
(iv) Killing vector fields of constant length, and
(v) extension of these results from riemannian to Finsler manifolds..
\medskip

{\bf Concerning Case by Case Verifications.} 
My proof \cite{W1962} of the Homogeneity Conjecture for symmetric 
spaces involved a certain amount of case by case verification.  Some of 
this was simplified later by 
Freudenthal \cite{F1963} and Ozols (\cite{O1969}, \cite{O1973}, \cite{O1974}) 
with the restriction that $\Gamma$ be contained in the identity component 
of the isometry group.
\medskip

{\bf Nonpositive Curvature and Bounded Isometries.}
This approach was implicit in the treatment of symmetric spaces of
noncompact type in \cite{W1960} and \cite{W1962}, and extended in \cite{W1964}
to all riemannian manifolds $S$ of non--positive sectional curvature.  The 
idea is to apply an isometry $\gamma$ of bounded displacement to a geodesic
$\sigma$ and see that $\sigma$ and $\gamma(\sigma)$ bound a flat totally
geodesic strip in $S$.  This was extended later by Druetta \cite{D1983} to
manifolds without focal points.  Further evidence for the Homogeneity
Conjecture was developed by Dotti, Miatello and the author in \cite{DMW1986}
for riemannian manifolds that admit a transitive semisimple group of isometries 
that has no compact factor, and by the author in \cite{W2016} for
riemannian manifolds that admit a transitive exponential solvable group
of isometries. Here also see \cite{W1963}.
\medskip

{\bf Killing Vector Fields of Constant Length.}
The infinitesimal version of isometries of constant displacement is that of
Killing vector fields of constant length.  This topic seems to have been
initiated by Berestovskii and Nikonorov in (\cite{BN2008a}, \cite{BN2008b}, 
\cite{BN2009}), and was further developed by Nikonorov 
(\cite{N2013}, \cite{N2016}), and by Podest\` a, myself and Xu (\cite{WPX2016},
\cite{WX2016a}, \cite{WX2016b}).  Also see Corollary \ref{CK-fields} above.
\medskip

{\bf Finsler Extensions.}
If $(M,F)$ is a Finsler symmetric space, say $M = G/K$ where $G$ is the
identity component of the (Finsler) isometry group, then there is a
$G$--invariant riemannian metric $ds^2$ on $M$ such that $(M,ds^2)$ is
riemannian symmetric with the same geodesics as $(M,F)$.  See
\cite[Theorem 11.6.1]{W2007} and the discussion in \cite[\S 11.6]{W2007}.
Deng and I proved the Homogeneity Conjecture for Finsler symmetric spaces
in \cite{DW2012} by reduction to the riemannian case.  Further, Deng and
others in his school, especially Xu, have done a lot on isometries of 
constant displacement and on Killing vector fields of constant length;
for example see \cite{DX2012}, \cite{DX2013a}, \cite{DX2013b}, 
\cite{DX2014a}, \cite{DX2014b} and \cite{DX2015}.  The arguments involving
reduction to the riemannian case usually depend on very technical
computations.  In \cite{DW2012}, for example, one has to prove the 
Berwald condition in order to get around the lack
of a de Rham decomposition for Finsler manifolds.

\section{Isotropy--Splitting Fibrations over Noncompact Spaces}
\label{sec9}
\setcounter{equation}{0}
In this section we examine our theory of isotropy--splitting fibrations 
$\pi: \widetilde{M} \to M$ in the setting in which the base $M$ is noncompact.
For example $M$ could be the noncompact
dual of one of the riemannian symmetric spaces of Section \ref{sec2}, or
a certain variation for the $3$--symmetric and $5$--symmetric spaces.
\medskip

Here is the basic problem with noncompact base manifolds.
Recall that $M = G/K$ and $\widetilde{M} = G/K_1$ carry the normal metrics 
defined by the negative $-\kappa$ of the Killing form of $\gg$.  
Then $-\kappa$ cannot be definite on $\gk_1^\perp$\,,
for $\gk_1$ cannot be a maximal compactly embedded subalgebra.
So we are forced to either restrict attention to the setting of compact
riemannian manifolds $M$ and $\widetilde{M}$, or expand attention to
the situation where $\widetilde{M}$ is a noncompact pseudo--riemannian 
manifold.  At that point we modify the compact manifold definition
(\ref{setup}) for isotropy--splitting fibrations $\pi: \widetilde{M} \to M$,
replacing ``compact'' by ``reductive'' and dealing with the lack of a general
de Rham decomposition for pseudo--riemannian manifolds:
\begin{equation}\label{setup-ps}
\begin{aligned}
&G \text{ is a connected real reductive linear algebraic group with } 
	G_\C \text{ simply connected,}\\
&K = K_1K_2 \text{ where the $K_i$ are closed connected reductive algebraic
	subgroups of $G$ such that}\\
& \phantom{XXXX}\text{ (i)  $K = (K_1 \times K_2)/(K_1 \cap K_2)$,}
        \text{ (ii) } \gk_2 \perp \gk_1 \text{ and }
        \text{ (iii) } \dim \gk_1 \ne 0 \ne \dim \gk_2\,, \\
&\text{the centralizers } Z_G(K_1) = Z_{K_1}\widetilde{K_2}
        \text{ and } Z_G(K_2) = Z_{K_2}\widetilde{K_1} \text{ with }
        K_1 = \widetilde{K_1}^0 \text{ and }
        K_2 = \widetilde{K_2}^0\,, \text{ and } \\
&M = G/K \text{ and } \widetilde{M} = G/K_1 \text{ are
	normal pseudo--riemannian homogeneous spaces of } G.
\end{aligned}
\end{equation}
As before we may assume that the metrics on $M$ and $\widetilde{M}$ are the
normal pseudo--riemannian metrics defined by the negative of the Killing 
form of $G$.

\subsection{Noncompact Riemannian Symmetric Base}
\label{ssec9a}

A particularly interesting case is where $M = G/K$ is an irreducible riemannian
symmetric space of noncompact type.  We list all such isotropy--splitting 
fibrations $\pi: \widetilde{M} \to M$, given by $G/K_1 \to G/K$ with
$\rank K = \rank G$.  There of course $M = G/K$ is the noncompact dual of one 
of the fibrations of Section \ref{sec2} over a compact symmetric space of
nonzero Euler characteristic.
The ones with hermitian symmetric space base are characterized by
$K = SK'$ where $S$ is a circle group and $K' = [K,K]$
is semisimple.  The corresponding fibrations are
$$
\begin{aligned}
&G/K' \to G/K \text{ circle bundle over a bounded symmetric domain and } \\
&G/S \to G/K \text{ principal $K'$--bundle over bounded symmetric domain.}
\end{aligned}
$$
In addition, if $\gg = \gs\gu(s,t)$ then $\gk' = \gs\gu(s)\oplus\gs\gu(t)$,
leading to fibrations
$$
\begin{aligned}
&SU(s,t)/SU(s) \to SU(s,t)/S(U(s)U(t)) \text{ and  }
        SU(s,t)/U(s) \to SU(s,t)/S(U(s)U(t)),\\
&SU(s,t)/SU(t) \to SU(s,t)/S(U(s)U(t)) \text{ and  }
        SU(s,t)/U(t) \to SU(s,t)/S(U(s)U(t)).
\end{aligned}
$$
When the base $M = G/K$ is a nonhermitian symmetric space, $K$ is
simple except in the cases
$$
\begin{aligned}
& G/K = SO^0(s,t)/SO(s)SO(t) \text{ with } 2 < s \leqq t
        \text{ and } st \text{ even, }\\
& G/K = Sp(s,t)/Sp(s)Sp(t) \text{ with } 1 \leqq s \leqq t, \\
& G/K = G_{2,A_1A_1}/A_1A_1\,, G/K = F_{4,A_1C_3}/A_1C_3\,, 
E_{6,A_1A_5}/A_1A_5\,, E_{7,A_1D_6}/A_1D_6 \text{ or }
        E_{8,A_1E_7}/A_1E_7\,.
\end{aligned}
$$
In the $SO$ cases, $G/K$ is a quaternion--Kaehler symmetric space for 
$s = 3$ and for $s = 4$.  In the $Sp$ cases $G/K$ is a quaternion--Kaehler 
symmetric space for $s =1$.  In the exceptional group cases $G/K$ always 
is a quaternion--Kaehler symmetric space.
\medskip

Finally we list the isotropy--splitting fibrations $\pi: \widetilde{M} \to M$, 
given by $G/K_1 \to G/K$ with $\rank K < \rank G$.  There $M = G/K$ is the
noncompact dual of an odd dimensional real Grassmann manifold
$SO(2s+2+2t)/[SO(2s+1) SO(1+2t)]$ with $s, t > 1$, leading to the
fibrations
$$
\begin{aligned}
& G/K = SO^0(2s+1,1+2t)/SO(2s+1) \to SO^0(2s+1,1+2t)/[SO(2s+1) SO(1+2t)]
		\text{ and }\\
& G/K = SO^0(2s+1,1+2t)/SO(1+2t) \to SO^0(2s+1,1+2t)/[SO(2s+1) SO(1+2t)]
\end{aligned}
$$
with $s,t > 1$.
\subsection{Compact Riemannian Dual Fibration}
\label{ssec9b}

Given $(G,K_1,K_2)$ as in (\ref{setup-ps}), there is a Cartan involution 
$\theta$ 
of $G$ that preserves each $K_i$ and restricts on it to a Cartan involution.
That defines the compact Cartan dual triple $(G^u,K_1^u,K_2^u)$ and
the compact riemannian isotropy--splitting fibration 
$\pi^u: \widetilde{M^u} \to M^u$\,,
given by $G^u/K^u_1 \to G^u/K^u_1K^u_2$\,, as in (\ref{setup}).
Several pseudo--riemannian isotropy--splitting fibrations 
$\pi: \widetilde{M} \to M$ can
define the same $\pi^u: \widetilde{M^u} \to M^u$\,. We say that
$\pi: \widetilde{M} \to M$ is {\em associated} to 
$\pi^u: \widetilde{M^u} \to M^u$\,.
\medskip

One special case is the one where $G$, $K_1$ and $K_2$ each is the underlying
real structure of a complex Lie group.  Then $G = G^u_\C$,, $K_1 = (K_1)^u_\C$
and $K_2 = (K_2)^u_\C$\,.
\medskip

For each of the isotropy--splitting fibrations $\pi^u: \widetilde{M^u} \to M^u$ 
that satisfies (\ref{setup}) we find all associated pseudo--riemannian
isotropy--splitting fibrations $\pi: \widetilde{M} \to M$ in the tables of
\cite{WG1968a} and \cite{WG1968b}. Here, for example, are the ones for
$n_0 = 3$, corresponding to the nearly--Kaehler base spaces of Section 
\ref{ssec2c}, taken from \cite[Table 7.13]{WG1968b}.
\medskip

\noindent
$\bullet$ For $\widetilde{M^u} \to M^u$ where $M^u = F_4/A_2A_2$, $M$ can be
$M^u$ or one of
$$
\begin{aligned}
&F_{4,B_4}/[SU(1,2)SU(3)]\,\,, F_{4,C_1C_3}/[SU(3)SU(1,2)],\\
&F_{4,C_1C_3}/[SU(1,2)SU(1,2)]\, \text{ or }\, F_4^\C/[SL(3;\C)SL(3;\C)].
\end{aligned}
$$
\noindent
$\bullet$ For $\widetilde{M^u} \to M^u$ where $M^u = E_6/A_2A_2A_2$, $M$ can be
$M^u$ or one of
$$
\begin{aligned}
&E_6/A_1A_5/[SU(1,2)SU(3)SU(3)],\,\, E_6/A_1A_5/[SU(1,2)SU(1,2)SU(3)],\\
&E_{6,D_5T^1}/[SU(1,2)SU(1,2)SU(3)], \,\text{ or }\, 
E_6^\C/[SL(3;\C)SL(3;\C)SL(3;\C)].
\end{aligned}
$$
$\bullet$ For $\widetilde{M^u} \to M^u$ where $M^u = E_7/A_2A_5$, $M$ can be
$M^u$ or one of
$$
\begin{aligned}
&E_{7,A_1D_6}/[SU(1,2)SU(1,5)],\,\, E_{7,A_1D_6}/[SU(3)SU(2,4)],\,\,
	E_{7,A_1D_6}/[SU(1,2)SU(2,4)], \\
&E_{7,E_6T^1}/[SU(1,2)SU(1,5)],\,\, E_{7,E_6T^1}/[SU(3)SU(3,3)], \,\text{ or }\,
	E_7^\C/[SL(3;\C)SL(6;\C)].
\end{aligned}
$$
$\bullet$ For $\widetilde{M^u} \to M^u$ where $M^u = E_8/A_2E_6$, $M$ can be
$M^u$ or one of
$$
\begin{aligned}
&E_{8,A_1E_7}/[SU(1,2)E_6],\,\, E_{8,A_1E_7}/[SU(1,2)E_{6,D_5T^1}],\\
&E_{8,A_1E_7}/[SU(3)E_{6,A_1A_5}], \,\text{ or }\, E_8^\C/[SL(3;\C)E_6^\C].
\end{aligned}
$$

\subsection{Isometries and Killing Vector Fields}
\label{ssec9c}

We use the notation (\ref{setup-ps}).  As in the compact case we have
\begin{equation}\label{g-tilde-nonc}
\widetilde{G} := G \times r(K_2) \text{ is connected and algebraic, and 
acts on } \widetilde{M} \text{ by } (g,r(k_2)): xK_1 \mapsto gxk_2^{-1}K_1\,.
\end{equation}
Theorem \ref{ident-comp} extends to the pseudo--riemannian setting as follows.
\begin{proposition}\label{ident-comp-ps}
If $\widetilde{M}$ is irreducible then
the isometry group of $\widetilde{M}$ has identity component
$I^0(\widetilde{M}) = \widetilde{G}$.
\end{proposition}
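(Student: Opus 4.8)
The plan is to follow the proof of Theorem \ref{ident-comp}, but since the cited corollary of Reggiani is formulated for positive--definite metrics I would replace that appeal by a direct argument at the level of Killing fields that is insensitive to the signature of the metric.

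First I would record that $\widetilde{M} = G/K_1$ is naturally reductive relative to $G$ and the decomposition $\gg = \gk_1 \oplus \gm_1$ with $\gm_1 = \gk_1^\perp$ taken with respect to the Killing form $\kappa$. The computation in the proof of Lemma \ref{go-space} uses only that each $\ad(\xi)$ is $\kappa$--antisymmetric and that $\gm_1$ is the $\kappa$--orthocomplement of $\gk_1$, and both of these hold verbatim under {\rm (\ref{setup-ps})}. Hence $\widetilde{M}$ is a pseudo--riemannian geodesic orbit space. The inclusion $\widetilde{G} \subseteq I^0(\widetilde{M})$ is then the easy half: the left $G$--action preserves the normal metric by construction, and each $r(k_2)$ preserves it because the metric descends from the bi--invariant form $-\kappa$ and $r(K_2)$ commutes with left translation, exactly as in the unnumbered lemma preceding Theorem \ref{ident-comp}; since $\widetilde{G}$ is connected it lies in $I^0(\widetilde{M})$.

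The real content is the reverse inclusion $I^0(\widetilde{M}) \subseteq \widetilde{G}$. Let $\cI$ denote the Lie algebra of $I^0(\widetilde{M})$, so that $\widetilde{\gg} = \gg \oplus dr(\gk_2) \subseteq \cI$ and I want equality. Because $\widetilde{G}$ is already transitive, evaluation $X \mapsto X_{\widetilde{x_0}}$ at $\widetilde{x_0} = 1K_1$ carries $\widetilde{\gg}$ onto $T_{\widetilde{x_0}}\widetilde{M}$, so it suffices to bound the isotropy subalgebra $\cI_0 = \{X \in \cI : X_{\widetilde{x_0}} = 0\}$ and to check that it is no larger than $\widetilde{\gg} \cap \cI_0$. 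The covariant derivative $X \mapsto (\nabla X)_{\widetilde{x_0}}$ embeds $\cI_0$ into the pseudo--orthogonal algebra $\gso(\gm_1)$ of $-\kappa|_{\gm_1}$, and a Kostant--Nomizu argument confines this image to the normalizer $N_{\gso(\gm_1)}(\mathfrak{hol})$ of the Levi--Civita holonomy algebra $\mathfrak{hol} \subseteq \gso(\gm_1)$. Since $\mathfrak{hol}$ is already contained in the isotropy image of $\widetilde{\gg} \cap \cI_0$, equality reduces to controlling this normalizer.

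The main obstacle is precisely this last step, because the indefinite metric removes the de Rham decomposition that underlies the positive--definite theory and Reggiani's corollary. I would exploit irreducibility of $\widetilde{M}$: it forces $\mathfrak{hol}$ to act with no proper nondegenerate invariant subspace, so that by a Schur--type argument its centralizer in $\gso(\gm_1)$ reduces to a skew scalar, which vanishes; the remaining normalizer directions would then have to be realized by genuine Killing fields, which the transitivity of $\widetilde{G}$ and the specific isotropy forced by {\rm (\ref{setup-ps})} rule out. The delicate part, which I expect to require either a genuinely pseudo--riemannian formulation of the Kostant--Nomizu normalizer theorem or a case--by--case check over the list in {\rm (\ref{setup-ps})}, is to secure the normalizer bound on $\cI_0$ without orthogonal de Rham splitting, replacing each use of that splitting by a direct computation with the curvature tensor and the nondegeneracy of $-\kappa$ on $\gm_1$. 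An alternative I would pursue in parallel is to transfer the bound from the compact dual $\widetilde{M^u}$ of {\rm Section \ref{ssec9b}}, where Theorem \ref{ident-comp} already yields $I^0(\widetilde{M^u}) = \widetilde{G^u}$, by matching the complexified holonomy algebras of the dual pair; the difficulty there is that the two holonomy algebras sit in different real forms of $\gso$, so the comparison of real normalizer dimensions needs separate justification.
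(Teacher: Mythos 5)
Your plan correctly locates where the difficulty lies, but it does not close it, and the step you flag as ``delicate'' is in fact the entire content of the proposition; as written the argument has a genuine gap. The Schur-type step is the concrete failure point: in indefinite signature, the absence of a proper \emph{nondegenerate} invariant subspace for the holonomy representation does not make that representation irreducible --- it can still admit degenerate (e.g.\ totally isotropic) invariant subspaces --- and for such indecomposable-but-reducible representations the centralizer in $\gso(\gm_1)$ typically contains nonzero nilpotent elements rather than reducing to a skew scalar. For the same reason there is no off-the-shelf pseudo--riemannian Kostant--Nomizu normalizer theorem, so both the containment of the isotropy algebra $\cI_0$ in the normalizer of the holonomy algebra and the subsequent comparison with $\widetilde{\gg}\cap\cI_0$ are left unestablished. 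In particular nothing in your sketch excludes the possibility that $I^0(\widetilde{M})$ has a nontrivial unipotent (non-reductive) part, which is exactly the new phenomenon that must be ruled out once the metric is indefinite.

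The paper's proof avoids holonomy entirely and runs at the level of linear algebraic groups, which is the direction your ``parallel alternative'' gestures at, but in a form that actually closes. One assumes $\widetilde{G}\subsetneqq L$ for a closed connected algebraic subgroup $L\subset I(\widetilde{M})$, takes a maximal reductive subgroup $L^{red}\supset\widetilde{G}$, and passes to compact real forms: Theorem \ref{ident-comp} applied to $\widetilde{M}^u$ gives $\widetilde{G}^u=L^{red,u}$, hence $\widetilde{G}=L^{red}$, so $\widetilde{G}$ is a maximal reductive subgroup of $L$. The remaining issue --- the unipotent radical $L^{unip}$ --- is handled by a categorical quotient argument: $\widetilde{G}$ acts transitively on $L^{unip}\backslash\backslash\widetilde{M}$ with isotropy $\widetilde{G}\cap(\widetilde{K_1}L^{unip})=\widetilde{K_1}$, which forces the quotient map to be injective, so $L^{unip}$ acts trivially and effectiveness gives $L^{unip}=\{1\}$, whence $L=\widetilde{G}$. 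If you want to salvage your route you must supply an argument of precisely this kind in place of the Schur/normalizer step; the comparison of real forms of holonomy algebras you propose at the end runs into the same degenerate-subspace obstruction and is not easier.
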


\begin{proof} Evidently $\widetilde{G}$ acts by isometries on
$\widetilde{M}$, and by hypothesis $\widetilde{M}$ is an affine algebraic
variety.  
Now suppose that $\widetilde{G} \subsetneqq L$ where $L$ is a closed 
connected algebraic subgroup of $I(\widetilde{M})$.  Let $L^{red}$ denote 
a maximal reductive subgroup of $L$ that contains $\widetilde{G}$, so 
$\widetilde{G} \subset L^{red}$.  The compact real forms 
$\widetilde{G}^u \subset L^{red,u}$, and Theorem \ref{ident-comp} ensures 
that $\widetilde{G}^u = I^0(\widetilde{M}^u) = L^{red,u}$.  Thus 
$\widetilde{G}$ is a maximal reductive subgroup of $L$.
\medskip

Let $H$ denote the isotropy subgroup of $L$ at $1K_1$\,.  It contains
$\widetilde{K_1} := K_1 \times \{(k_2,r(k_2)) \mid k_2 \in K_2\}$, the 
isotropy subgroup of $\widetilde{G}$ at $1K_1$\,, and $\widetilde{K_1}$ 
is its maximal reductive subgroup.  Let $L^{unip}$ denote the unipotent
radical of $L$.  Since $L^{unip}$ is a normal subgroup of $L$ its orbits
satisfy $L^{unip}(gK_1) = gL^{unip}(K_1)$.  Thus 
$\widetilde{M} \to L^{unip} \backslash \widetilde{M}$ would be a fiber
space if the $L^{unip}$--orbits on $\widetilde{M}$ were closed 
submanifolds.  To get around that problem let  $N$ denote the categorical 
quotient $L^{unip} \backslash\backslash \widetilde{M}$.  We can view it
as the base space of the fibration whose fibers are the closures of
the $L^{unip}$--orbits on $\widetilde{M}$.  The (transitive) action of
$\widetilde{G}$ on $\widetilde{M}$ descends to a smooth transitive action
of $\widetilde{G}$ on $N$.  The action of $L$ descends as well. 
Write $N = \widetilde{G}/Q$ where $Q$ contains $\widetilde{K_1}$.  
Then $\widetilde{K_1} L^{unip}$ is the isotropy subgroup of $L$ on $N$
and $Q = \widetilde{G}\cap (\widetilde{K_1} L^{unip}) = \widetilde{K_1}$\,.
This says $\widetilde{M} \to L^{unip} \backslash\backslash \widetilde{M}$
is one to one.  In other words the action of $L^{unip}$ on $\widetilde{M}$ is
trivial.  As $L$ acts effectively by its definition, $L^{unip} = \{1\}$.
Now $L = L^{red} = \widetilde{G}$.
\end{proof}

In the pseudo--riemannian setting we don't have a good notion for the
displacement of an isometry, but we still have its infinitesimal analog.
We define {\em constant length} for a vector field to mean constant inner 
product with itself relative to the invariant pseudo--riemannian metric.  
Fix a Cartan involution $\theta$ of $\widetilde{G}$ that preserves
$G$, $K_1$\,, $K$ and $\widetilde{K_1}$\,.  We say that an element
$\xi \in \widetilde{\gg}$ is {\em elliptic} if $d\theta(\Ad(g)\xi) = \Ad(g)\xi$
for some $g \in G$, {\em hyperbolic} if $d\theta(\Ad(g)\xi) = -\Ad(g)\xi$.
for some $g \in G$.  In other words $\xi$ is elliptic if all the eigenvalues
of $\ad(\xi)$ are pure imaginary, hyperbolic if all the eigenvalues of $\xi$
are real.  Here is the noncompact base analog of of Corollary \ref{CK-fields}:

\begin{corollary}\label{CK-fields-nonc}
Suppose that 
the pseudo--riemannian manifold $\widetilde{M} = G/K_1$ is irreducible.
\begin{itemize}
\item[{\rm (1)}] The algebra of all Killing vector fields on $\widetilde{M}$ is
$\widetilde{\gg} = \gg \oplus dr(\gk_2)$.
\item[{\rm (2)}] The set of all constant length
Killing vector fields on $\widetilde{M}$ is $\gl \oplus dr(\gk_2)$ where \\
\phantom{XXXXXXXXXX}
$\gl = \{\xi \in \gg \mid \xi \text{ defines a constant length Killing
vector field on } \widetilde{M}\}.$
\item[{\rm (3)}]
$\gl = \{\xi \in \gg \mid \xi \text{ defines a constant length Killing
vector field on } M \}.$
\item[{\rm (4)}]
If $\rank K = \rank G$ and $\xi \in \gl$ then $\xi$ is $\Ad(G)$--conjugate
to an element of $\gk$ and the corresponding Killing vector field 
has norm $||\xi_{gK_1}|| = 0$ at every point $gK_1 \in \widetilde{M}$.
In particular $\gl$ does not contain a nonzero elliptic element nor
a nonzero hyperbolic element.
\item[{\rm (5)}]
If $\rank K = \rank G$ and $K$ is compact, 
then $\gl = 0$, so $dr(\gk_2)$ is the set of all 
constant length Killing vector fields on $\widetilde{M}$.
\end{itemize}
\end{corollary}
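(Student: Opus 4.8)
The plan is to follow the template of Corollary~\ref{CK-fields}, replacing Theorem~\ref{ident-comp} by its pseudo--riemannian counterpart Proposition~\ref{ident-comp-ps}. Part~(1) is then immediate, since $\widetilde{\gg}=\gg\oplus dr(\gk_2)$ is by definition the Lie algebra of $\widetilde{G}=I^0(\widetilde{M})$. For part~(2) I would note that every $\xi\in dr(\gk_2)$ is centralized by the transitive isometry group $G$, so its length is $G$--invariant, hence constant; the only new point over the riemannian case is that ``constant length'' now means a constant (possibly negative or null) value of $\langle\xi,\xi\rangle$. For part~(3) the fibre $F$ is the nondegenerate group manifold $K/K_1$, so the splitting $\xi=\xi'+\xi''$ with $\xi'\in\gg$, $\xi''\in dr(\gk_2)$ is orthogonal at every point as in Corollary~\ref{CK-fields}, giving $\langle\xi,\xi\rangle=\langle\xi',\xi'\rangle+\langle\xi'',\xi''\rangle$ pointwise; since $\langle\xi,\xi\rangle$ and $\langle\xi'',\xi''\rangle$ are constant, so is $\langle\xi',\xi'\rangle$, and the argument of Corollary~\ref{CK-fields}(3) then identifies $\gl$ with the constant--length fields coming from $\gg$ on $M$.

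For part~(4) the first move is to restate ``$\xi$ is $\Ad(G)$--conjugate into $\gk$'' as ``the Killing field of $\xi$ on $M$ vanishes at some point,'' since $\Ad(g_0)\xi\in\gk$ is exactly the condition $P_\gm\Ad(g_0)\xi=0$. Granting such a zero, the null--length assertion is formal: by part~(2) the field has constant length on $M$, and at a point where it vanishes that constant is visibly $0$; the corresponding statement on $\widetilde{M}$ follows from the identification in part~(3). Thus all of~(4) reduces to producing a zero of the field on $M$.

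To produce the zero I would use $\rank K=\rank G$: this furnishes a $\theta$--stable compact Cartan subalgebra $\gt\subset\gk\cap\gh$ of $\gg$ (with $\gg=\gh\oplus\gq$ the Cartan decomposition), and every elliptic element of $\gg$ is $\Ad(G)$--conjugate into $\gt\subset\gk$. I would then Jordan--decompose $\xi=e+h+n$ into commuting elliptic, hyperbolic and nilpotent parts. First, the constant--length condition $\|P_\gk\Ad(g)\xi\|^2\equiv\langle\xi,\xi\rangle-\nu$ should rule out a nonzero hyperbolic part: moving along a noncompact direction $g=\exp(sY)$ with $Y\in\gm\cap\gq$, the hyperbolic flow forces exponential growth in the root components of $\Ad(g)\xi$, and a root--space bookkeeping shows this is incompatible with constancy of the indefinite norm unless $h=0$. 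With $h=0$ one has $\xi=e+n$, where $e$ is conjugated into $\gt\subset\gk$ and $n\in\gz_\gg(e)$ is nilpotent; the geodesic--orbit reasoning of Lemma~\ref{go-space} (the flow lines of a constant--length Killing field are geodesics, so $\exp(t\xi)\cdot 1K=\exp(t\,P_\gm\xi)\cdot 1K$) then pins $n$ into $\gk$, so that $\xi$ is conjugate into $\gk$.

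The ``in particular'' clause and part~(5) follow quickly. If $\xi\in\gl$ is elliptic or hyperbolic, conjugate it into $\gk$ by~(4); since the field is null everywhere, a first--order expansion at the zero, using $[\gk,\gm]\subset\gm$ and the skew--symmetry of $\ad(\xi)|_\gm$ for $\langle\cdot,\cdot\rangle$, gives $\langle(\ad\xi)^2\eta,\eta\rangle=0$ for all $\eta\in\gm$, hence $(\ad\xi|_\gm)^2=0$; but an elliptic or hyperbolic $\xi$ has $\ad(\xi)$ semisimple, so $\ad(\xi)|_\gm=0$, and irreducibility (as in Lemma~\ref{no-nonzero}) forces $\xi$ into the center, i.e. $\xi=0$ for semisimple $\gg$. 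For part~(5), if $K$ is compact then $\gk$ consists of elliptic elements, so any $\xi\in\gl$---conjugate into $\gk$ by~(4)---is elliptic, whence $\xi=0$ and $\gl=0$. The main obstacle is precisely the step excluding the hyperbolic part and placing the nilpotent part in $\gk$: this is where the compact Euler--characteristic argument of Corollary~\ref{CK-fields}(4) is unavailable, and it must be replaced by the asymptotic root--space analysis of the indefinite field--norm along the noncompact directions of $M$.
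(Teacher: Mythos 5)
Your parts (1)--(3) match the paper's proof: (1) is read off from Proposition \ref{ident-comp-ps}, and (2)--(3) use exactly the orthogonal splitting $\xi=\xi'+\xi''$ with $\xi''\in dr(\gk_2)$ of constant length by $G$--invariance. The problem is part (4), which you correctly identify as the crux and then do not actually prove. The two decisive steps of your plan --- that ``a root--space bookkeeping shows'' the hyperbolic Jordan component must vanish, and that ``the geodesic--orbit reasoning of Lemma \ref{go-space} then pins $n$ into $\gk$'' --- are announced, not carried out. They are also shaky as stated: Lemma \ref{go-space} is proved only in the compact riemannian setting, and the fact you want from it (integral curves of a constant--length Killing field are geodesics) is a definite--metric statement that breaks down precisely for the null fields that (4) says are the only ones that occur. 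Likewise, your passage from ``null on $M$'' to ``null on $\widetilde{M}$'' via part (3) ignores the $\gk_2$--component of the field along the fibers, whose (possibly nonzero, possibly negative) square--norm is exactly the difference between the two lengths.

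The paper's argument for (4) is shorter and avoids the Jordan decomposition entirely; it is the pseudo--riemannian replacement for the Euler--characteristic argument that you were looking for. Fix a Cartan involution $\theta$ and write $\xi=\xi_{ell}+\xi_{hyp}$ as the $(\pm 1)$--eigenspace decomposition under $d\theta$ (\emph{not} commuting Jordan parts). The metric $-\kappa$ is positive definite on the elliptic directions, negative definite on the hyperbolic directions, and the two are orthogonal; the hypothesis $\rank K=\rank G$ forces the elliptic contribution to the field to vanish at some point of $M$ and likewise the hyperbolic contribution. Hence the constant $||\xi||^2$ cannot be $>0$ (that would require the elliptic part of the field to be nowhere zero) and cannot be $<0$ (same for the hyperbolic part), so $||\xi||^2=0$; the zero of the field then gives the $\Ad(G)$--conjugacy into $\gk$, and (5) follows since a null elliptic element is zero when $\gk$ consists of elliptic elements. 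Your second--order expansion at a zero, giving $(\ad\xi|_\gm)^2=0$ and hence $\ad\xi|_\gm=0$ for semisimple $\xi$, is a nice touch for the ``in particular'' clause and is if anything more explicit than the paper; but it sits on top of the unproved step, so as written the proposal has a genuine gap at the heart of (4).
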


\begin{proof}  The first statement is immediate from Theorem 
\ref{ident-comp-ps}, so we turn to the second and third.  
\medskip

If $\xi$ is a Killing vector field on $\widetilde{M}$
then $\xi \in \gg \oplus dr(\gk_2)$.  Decompose $\xi = \xi' + \xi''$
with $\xi' \in \gg$ and $\xi'' \in dr(\gk_2)$.  Then $\xi''$ has constant
length because the corresponding vector field on $\widetilde{M}$ is 
invariant under the transitive isometry group $G$.  The vector fields of
$\xi'$ and $\xi''$ are orthogonal at $1K_1$\,.  It follows that they are
orthogonal at every point of $\widetilde{M}$ because $\xi'$ is orthogonal
to the fibers of $\widetilde{M} \to M$ at every point of $\widetilde{M}$.  
If $\xi$ has constant length now $\xi'$ also has constant length.   
That proves the second statement.
In the argument just above, $\xi$ and $\xi'$ define the same Killing
vector field on $M$.  The third statement follows.
\medskip

Now suppose $\rank K = \rank G$ and let $\xi \in \gl$.  Using a Cartan
involution of $\gg$ we write $\xi = \xi_{ell} + \xi_{hyp}$ where
$\xi_{ell}$ is elliptic and $\xi_{hyp}$ is hyperbolic.  Recall that we are 
using the negative of the Killing form of $\gg$ for the pseudo--riemannian 
metrics both on $\widetilde{M}$ and $M$.  If the square length $||\xi||^2 > 0$
on $\widetilde{M}$ then $\xi_{ell}$ never vanishes on $M$, contradicting 
$\rank K = \rank G$.
If $||\xi||^2 < 0$ then $\xi_{hyp}$ never vanishes on $M$, contradicting
$\rank K = \rank G$.  Thus $||\xi||^2 = 0$ on $\widetilde{M}$, and thus
on $M$.  As above $\xi$ has a zero on $M$, in other words some 
$\Ad(G)$--conjugate 
of $\xi$ belongs to $\gk$.  Thus $\xi \in \gk$ with length $||\xi|| = 0$
at every point of $\widetilde{M}$  In particular, if $K$ is compact then
$\xi$ is elliptic, so $\xi = 0$ and $dr(\gk_2)$ is the set of all
constant length Killing vector fields on $\widetilde{M}$.
\end{proof}

\begin{corollary}\label{fiber-inv-ps}
Suppose that $\widetilde{M}$ is irreducible.  Then every isometry of
$\widetilde{M}$ sends fiber to fiber in the isotropy--split fibration
$\widetilde{M} \to M$, and thus induces an isometry of $M$.
\end{corollary}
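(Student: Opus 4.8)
The plan is to reduce fiber--preservation to a purely Lie--algebraic statement about the Killing algebra $\widetilde{\gg} = \gg \oplus dr(\gk_2)$ of Corollary \ref{CK-fields-nonc}(1), and then to settle that statement from the ideal structure of $\widetilde{\gg}$. First I would fix an arbitrary isometry $\phi \in I(\widetilde{M})$. Since the identity component $I^0(\widetilde{M}) = \widetilde{G}$ of Proposition \ref{ident-comp-ps} is normal in $I(\widetilde{M})$, conjugation by $\phi$ preserves $\widetilde{G}$ and hence induces a Lie--algebra automorphism $\phi_*$ of $\widetilde{\gg}$; concretely $\phi_*\xi = d\phi\circ \xi \circ \phi^{-1}$ is the pushforward of the Killing field $\xi$. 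Because the left action of $G$ and the right action of $r(K_2)$ commute, $\gg$ and $dr(\gk_2)$ are commuting ideals and $\widetilde{\gg} = \gg \oplus dr(\gk_2)$ is a direct sum of ideals.

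Second, I would observe that $\phi$ sends fibers to fibers precisely when $\phi_*$ preserves the ideal $dr(\gk_2)$. Indeed the fibers of $\pi$ are the orbits of $r(K_2)$, that is, the integral manifolds of the vertical distribution $\mathcal{V}$ whose value at a point $p$ is $\{\xi(p) \mid \xi \in dr(\gk_2)\}$. Since $(\phi_*\xi)(\phi(p)) = d\phi_p(\xi(p))$, one has $d\phi(\mathcal{V}_p) = \{(\phi_*\xi)(\phi(p)) \mid \xi \in dr(\gk_2)\}$, and this equals $\mathcal{V}_{\phi(p)}$ exactly when $\phi_*(dr(\gk_2)) = dr(\gk_2)$.

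Third, and this is the crux, I would prove $\phi_*(dr(\gk_2)) = dr(\gk_2)$ from the structure of $\widetilde{\gg}$. Since $G_\C$ is simply connected, $\gg$ is semisimple, so the center $\gz(\widetilde{\gg}) = dr(\gz(\gk_2)) \subset dr(\gk_2)$ and the derived algebra $[\widetilde{\gg},\widetilde{\gg}] = \gg \oplus dr([\gk_2,\gk_2])$; both are characteristic and hence preserved by $\phi_*$. Irreducibility of $\widetilde{M}$ forces $\gg$ to be simple, so $\gg$ is a simple ideal of $[\widetilde{\gg},\widetilde{\gg}]$ whose dimension exceeds that of every simple factor of the proper subalgebra $\gk_2 \subset \gg$; thus no simple ideal of $dr([\gk_2,\gk_2])$ is isomorphic to $\gg$. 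As a Lie--algebra automorphism permutes isomorphic simple ideals, $\phi_*$ must fix $\gg$ and therefore carry $dr([\gk_2,\gk_2])$ to itself, which together with the center gives $\phi_*(dr(\gk_2)) = dr(\gk_2)$. The main obstacle is exactly this step, namely ruling out that $\phi_*$ swaps a simple factor of $\gg$ with one of $\gk_2$; this is why the simplicity of $\gg$ and the properness of $\gk_2 \subset \gg$ are essential, and why the irreducibility hypothesis is used.

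Finally, having shown that $\phi$ preserves $\mathcal{V}$, and hence, as $\mathcal{V}$ is nondegenerate for the normal metric, its orthogonal horizontal distribution $\mathcal{H} = \mathcal{V}^\perp$, I would conclude that $\phi$ carries fibers to fibers and so descends to a well--defined bijection $\bar{\phi}$ of $M$. Because $\pi : \widetilde{M} \to M$ is a pseudo--riemannian submersion for the normal metrics and $\phi$ is an isometry preserving both $\mathcal{V}$ and $\mathcal{H}$, the induced map $\bar{\phi}$ is an isometry of $M$, as claimed.
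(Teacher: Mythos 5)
Your route is genuinely different from the paper's. The paper characterizes the vertical distribution metrically: by Corollary \ref{CK-fields-nonc}(2) and (4), together with a Cartan involution $\theta$ preserving $K_2$, the tangent space to a fiber is the span of the elliptic and hyperbolic constant--length Killing vector fields, and since that description is manifestly isometry--invariant every isometry preserves the fibration. You instead use only the ideal structure of the Killing algebra $\widetilde{\gg} = \gg \oplus dr(\gk_2)$: the center and the derived algebra are characteristic, an automorphism permutes the simple ideals of the derived algebra, and a dimension count pins $\gg$ in place, hence also its complementary ideal $dr(\gk_2)$. Your reduction of fiber--preservation to $\phi_*(dr(\gk_2)) = dr(\gk_2)$ and your closing submersion argument are both correct, and your approach has the merit of not invoking Corollary \ref{CK-fields-nonc}(4), whose proof uses $\rank K = \rank G$ --- a hypothesis the corollary under review does not state.

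There is, however, a gap exactly at what you rightly call the crux. You assert that irreducibility of $\widetilde{M}$ forces $\gg$ to be simple, and the dimension count that fixes $\gg$ under $\phi_*$ rests entirely on that assertion. The implication is false for normal homogeneous spaces in general: $(L\times L)/{\rm diag}(L)$ with $L$ compact simple is an irreducible riemannian symmetric space whose transitive group is semisimple but not simple. So you must either show that within the constraints of (\ref{setup-ps}) --- in particular $Z_G(K_1) = Z_{K_1}\widetilde{K_2}$ with $\dim \gk_1 \ne 0 \ne \dim\gk_2$ --- irreducibility of $\widetilde{M}$ really does exclude a nonsimple $\gg$ (for instance, if $\gk_1$ lies in a proper ideal $\gg_1$ then $\gk_2 \supseteq \gg_1^{\perp}$ and $\widetilde{M}$ splits as a pseudo--riemannian product, while diagonal embeddings must be ruled out separately), or else argue directly that $\phi_*$ cannot exchange a simple ideal of $\gg$ with an isomorphic simple ideal of $dr(\gk_2)$. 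Until one of these is supplied, the step ``$\phi_*$ must fix $\gg$'' is unsupported, and it is the only step standing between you and the conclusion.
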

\begin{proof}
In view of Corollary \ref{CK-fields-nonc}(2) and \ref{CK-fields-nonc}(4),
and because we have a Cartan involution $\theta$ of $\widetilde{G}$
such that $\theta(K_2) = K_2$,
the tangent space to the fiber $r(K_2)(gK_1)$ is the span of all vector
fields $\xi_e + \xi_h$ where $\xi_e \in dr(\gk_2)$ is elliptic and
$\xi_h \in dr(\gk_2)$ is hyperbolic.
\end{proof}

Now we can look for the full isometry group of $\widetilde{M}$.  In fact
the result is very close to results in \cite[Section 2]{DMW1986}, and
the way we use it is contained in \cite[Section 2]{DMW1986}, but the
argument here is closer to the structure of the isotropy--splitting fibration.
Let
\begin{equation}\label{out-nonc}
\Out(G,K_1) = \{\alpha \in \Out(G) \mid \alpha(K_1) = K_1 \text{ and }
        \alpha|_{K_1} \in \Out(K_1)\} \subset \Out(G,K).
\end{equation}
We define two subgroups of isometry groups as in (\ref{daggers}) by
\begin{equation}\label{daggers-nonc}
G^\dagger = {\bigcup}_{\alpha \in \Out(G,K_1)}\, G \alpha \subset I(M)
\text{ and } \widetilde{G}^\dagger =
{\bigcup}_{\alpha \in \Out(G,K_1)\,, \beta \in \Out(G,K_2)}\,
G\alpha \cdot r(K_2)\beta \subset I(\widetilde{M}).
\end{equation}
As before, $g\alpha$ acts on $M$ by $xK \mapsto g\alpha(x)K$ and on
$\widetilde{M}$ by  $xK_1 \mapsto g\alpha(x)K_1$\,, and $r(k_2)\beta$
acts on $\widetilde{M}$ by $xK_1 \mapsto x\beta(k_2)^{-1}K_1$\,.
\medskip

\begin{theorem}\label{isogroup-nonc}
Let $\pi: \widetilde{M} \to M$ be an isotropy--split fibration as in
{\rm (\ref{setup-ps})}.  Suppose $\rank K = \rank G$.  Then
the identity component $I^0(\widetilde{M}) = \widetilde{G}$ and
the full isometry group $I(\widetilde{M}) = \widetilde{G}^\dagger$.
\end{theorem}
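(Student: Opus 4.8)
The plan is to mirror the compact proof of Theorem \ref{isogroup} line by line, substituting the pseudo--riemannian tools assembled above. The identity--component statement $I^0(\widetilde{M}) = \widetilde{G}$ is exactly Proposition \ref{ident-comp-ps}, so only the full group needs argument. As $\widetilde{G}$ is transitive on $\widetilde{M}$, any isometry $\phi$ has $\phi(1K_1) = g(1K_1)$ for some $g \in \widetilde{G}$, so $g^{-1}\phi$ lies in the base--point isotropy group $\widetilde{H}$ and $I(\widetilde{M}) = \widetilde{G}\,\widetilde{H}$. The problem thus reduces to computing $\widetilde{H}$, and $I(\widetilde{M}) = \widetilde{G}^\dagger$ follows once $\widetilde{H}$ is shown to equal the union over $\alpha \in \Out(G,K_1)$, $\beta \in \Out(G,K_2)$ of the pieces $K_1\alpha \cdot \{(k_2, r(k_2)) : k_2 \in \widetilde{K_2}\beta\}$ --- the pseudo--riemannian analog of Lemma \ref{isotropy}.

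I would prove that analog in two steps. For the identity component: by Corollary \ref{fiber-inv-ps} every isometry preserves the fiber $F$ through the base point, so $\widetilde{H}$ acts on $F$; the fiber is the group manifold $K/K_1$ under $K_2 \times r(K_2)$, whose base--point isotropy has identity component $\diag(K_2) = \{(k_2, r(k_2)) : k_2 \in K_2\}$, and as the connected group $K_1$ fixes the base point we get $\widetilde{H}^0 = K_1 \cdot \diag(K_2)$. For the other components, each $h \in \widetilde{H}$ preserves $F$ and conjugates $G$ and $\diag(K_2)$ into themselves, yielding a pair of automorphism classes $\alpha \in \Out(G,K_1)$ and $\beta \in \Out(G,K_2)$, and conversely every such pair is realized. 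The reverse inclusion $\widetilde{G}^\dagger \subseteq I(\widetilde{M})$ is routine, since each generator in (\ref{daggers-nonc}) preserves the Killing form together with $K_1$ and $K_2$.

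The one genuinely noncompact point, and the main obstacle, is the replacement of Lemma \ref{inn-out}, whose compact proof matched trivial components with inner restrictions via conjugacy of maximal tori and fixed points. For the forward direction I would argue algebraically: writing $k_0 = k_1k_2 \in K_1K_2$ one checks $\Int(k_0) = (k_0, r(k_2)) \in \widetilde{G}$, so $\Int(K) \subseteq I^0(\widetilde{M})$; hence if $\alpha \in \Aut(G)$ restricts to an inner automorphism of $K$, then after composing with a suitable element of $\Int(K)$ I may assume $\alpha$ fixes a maximal torus $T \subset K$ pointwise, and since $\rank K = \rank G$ this $T$ is maximal in $G$. An automorphism of a connected reductive group fixing a maximal torus pointwise is inner, given by conjugation by some $t_0 \in Z_G(T) = T \subset K$, so the adjusted $\alpha$, and hence the original, lies in $\Int(K) \subseteq I^0(\widetilde{M})$. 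To establish that the outer components are distinct and exhaustive without first computing $I^0(M)$, I would transfer the count through the compact dual of \S\ref{ssec9b}: the relative groups $\Out(G,K_i)$ are read off from the complexified root datum and so coincide with $\Out(G^u,K_i^u)$, forcing the component set of $\widetilde{H}$ to agree with that of the compact dual $\widetilde{M}^u$ furnished by Theorem \ref{isogroup}. The delicate part I expect to be checking that no extra components arise in the noncompact form --- equivalently that the finite group $I(\widetilde{M})/I^0(\widetilde{M})$ injects into its compact--dual analog --- which I would secure from the fact that every isometry normalizes $\widetilde{G} = I^0(\widetilde{M})$ and therefore induces an automorphism of the pair $(G,K_1)$ already detected over $G_\C$.
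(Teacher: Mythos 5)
Your overall strategy --- identity component from Proposition \ref{ident-comp-ps}, reduction to the base--point isotropy group, and a transfer to the compact dual for the component count --- matches the paper's, but the two steps you single out as ``genuinely noncompact'' both contain gaps. First, the assertion that an automorphism of a connected reductive group fixing a maximal torus pointwise is inner, given by conjugation by some $t_0 \in Z_G(T) = T$, is false over $\R$: for $G = SL(2,\R)$ with $T$ the diagonal torus, $\Ad\left(\begin{smallmatrix} 1 & 0 \\ 0 & -1\end{smallmatrix}\right)$ fixes $T$ pointwise yet is an outer automorphism of $G$, because the scalars by which such an automorphism acts on the root spaces are only guaranteed to be realized by a point of $T_\C$, not by a real point of $T$. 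The compact proof of Lemma \ref{inn-out} uses exactly the compactness of $T$ at this step (every unit scalar on a root space $\gg_\alpha$ is realized by $\Ad(t_0)$ with $t_0$ in the compact torus), so it does not ``mirror'' to the reductive setting as written.

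Second, the claim that $\Out(G,K_i)$ coincides with $\Out(G^u,K_i^u)$ because both are ``read off from the complexified root datum'' is also false in general: outer automorphism groups of noncompact real forms can be strictly larger than those of their compact duals ($\Out(SL(2,\R)) \cong \Z/2$ while $\Out(SU(2))$ is trivial), which is presumably why the paper restates the definition of $\Out(G,K_1)$ separately in (\ref{out-nonc}). The paper's actual mechanism sidesteps both problems: since $\widetilde{G}$ is a reductive linear algebraic group, every component of $\Aut(\widetilde{G})$ contains an elliptic element and hence meets $\Aut(\widetilde{G}^u)$; together with Corollary \ref{fiber-inv-ps} this yields injections $I(\widetilde{M})/I^0(\widetilde{M}) \hookrightarrow I(\widetilde{M}^u)/I^0(\widetilde{M}^u)$ and $I(M)/I^0(M) \hookrightarrow I(M^u)/I^0(M^u)$, after which Lemmas \ref{inn-out} and \ref{isotropy} are applied on the compact dual and the compact Theorem \ref{isogroup} finishes the count. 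If you want to keep your direct approach, you would need to replace the torus argument by the choice of an elliptic representative in each component of the isotropy group and carry out the inner/outer dichotomy on the compact real form rather than on $G$ itself.
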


\begin{proof} 
As $\widetilde{G}$ is a reductive linear 
algebraic group every component of $\Aut(\widetilde{G})$ contains an 
elliptic element.  Thus every component of $\Aut(\widetilde{G})$ has
an element in common with $\Aut(\widetilde{G}^u)$.  Corollary \ref{fiber-inv-ps}
carries this down to $M$.  That gives injections
$I(\widetilde{M})/I^0(\widetilde{M}) \hookrightarrow
I(\widetilde{M^u})/I^0(\widetilde{M^u})$ and $I(M)/I^0(M) \hookrightarrow
I(M^u)/I^0(M^u)$, so Lemmas \ref{inn-out} and \ref{isotropy} extend 
to our pseudo--riemannian setting.  Our assertions follow by combining
Theorem \ref{isogroup} with Proposition \ref{ident-comp-ps}.
\end{proof}

While don't have a notion of constant displacement here, we can at least
study homogeneity for pseudo--riemannian coverings by $\widetilde{M}$.

\begin{corollary}\label{homog-q-ps}
Let $\pi: \widetilde{M} \to M$ be an isotropy--split fibration as in
{\rm (\ref{setup-ps})}.  Suppose $\rank K = \rank G$.  Let $p$
denote the projection $\widetilde{G}^\dagger \to G^\dagger$ of
$I(\widetilde{M})$ into $I(M)$.  

{\rm (1)} Let $\gamma \in I(\widetilde{M})$ such that $p(\gamma)$ is 
elliptic and the centralizer of $\gamma$ is transitive on $\widetilde{M}$.
Then $p(\gamma) \in Z_G$\,. 

{\rm (2)} Consider a pseudo--riemannian covering
$\widetilde{M} \to \Gamma\backslash \widetilde{M}$ such that $p(\Gamma)$
has compact closure in $I(M)$  $($for example such that $p(\Gamma)$ is
finite$)$.  Then $\Gamma\backslash \widetilde{M}$ is homogeneous if and
only if $p(\Gamma) \subset Z_G$\,.
\end{corollary}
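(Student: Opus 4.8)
The plan is to reduce the substantive direction of (2) to part (1), and then to prove (1) by a fixed-point argument in which ellipticity supplies a fixed point on $M$ while the transitive centralizer forces $p(\gamma)$ to act trivially. \emph{For the easy direction of (2),} note that $\rank K=\rank G$ gives $Z_G\subset K$, so $p(\gamma)\in Z_G$ says precisely that the $G$--component of $\gamma$ is central, i.e. $\Gamma\subset Z_G\times r(K_2)\subset\widetilde G$. The left translation action of $G$ commutes with $Z_G$ and with $r(K_2)$, hence $G$ centralizes $\Gamma$; as $G$ is transitive on $\widetilde M=G/K_1$ it descends to a transitive group of isometries of $\Gamma\backslash\widetilde M$, which is therefore homogeneous.

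\emph{To reduce the hard direction of (2) to (1),} suppose $\Gamma\backslash\widetilde M$ is homogeneous. I would lift $I^0(\Gamma\backslash\widetilde M)$ through the covering $\widetilde M\to\Gamma\backslash\widetilde M$; the identity component of the lift is a connected group $L\subset I^0(\widetilde M)=\widetilde G$ (Proposition~\ref{ident-comp-ps}) that is transitive on $\widetilde M$ and, being connected and normalizing the discrete group $\Gamma$, centralizes $\Gamma$. Thus for every $\gamma\in\Gamma$ the centralizer of $\gamma$ in $I(\widetilde M)$ contains the transitive group $L$. Since $p(\Gamma)$ has compact closure, $\overline{\langle p(\gamma)\rangle}$ is a compact subgroup of $I(M)$, so $p(\gamma)$ is elliptic. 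Hence each $\gamma$ satisfies the hypotheses of (1), and (1) yields $p(\gamma)\in Z_G$, i.e. $p(\Gamma)\subset Z_G$.

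\emph{For (1) itself,} given $\gamma$ with $\delta:=p(\gamma)$ elliptic and transitive centralizer, I would first replace the centralizer by its identity component: on the connected manifold $\widetilde M$ a group with countably many components acts transitively only if its identity component does (Baire category), so there is a connected $C\subset I^0(\widetilde M)=\widetilde G$, transitive on $\widetilde M$ and commuting with $\gamma$. Since $\pi:\widetilde M\to M$ is $\widetilde G$--equivariant (Corollary~\ref{fiber-inv-ps}), the image $D:=p(C)\subset G$ is connected, transitive on $M$, and commutes with $\delta$. Now suppose $\delta$ fixes a single point $\bar x\in M$. For arbitrary $\bar y\in M$ pick $c\in D$ with $c\bar x=\bar y$; then $\delta\bar y=\delta c\bar x=c\delta\bar x=c\bar x=\bar y$, so $\delta$ fixes all of $M$. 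Outer isometries lie in nonidentity components of $I(M)$ (see the proof of Theorem~\ref{isogroup-nonc}), so $\delta$ must be inner and lies in the ineffective kernel $\ker(G\to I(M))=Z_G$, which is exactly the assertion $p(\gamma)\in Z_G$. This fixed-point propagation is the pseudo--riemannian substitute for the flat--rectangle argument of the compact case, in the spirit of \cite{DMW1986}.

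\emph{What remains is the crux, and it is where I expect the main obstacle.} One must produce a single fixed point of the elliptic $\delta$ on $M$. By definition of elliptic, $\delta$ is $G$--conjugate into the maximal compact subgroup $U_G$; conjugating (which preserves all hypotheses) I may assume $\delta\in U_G$. Writing $U_K=K\cap U_G$ for the maximal compact subgroup of the $\theta$--stable group $K$, I would argue that the equal--rank hypothesis forces $\rank U_K=\rank U_G$, so that $U_G/U_K$ has nonzero Euler characteristic and, exactly as in Lemma~\ref{fp-below}, every element of $U_G$ is $U_G$--conjugate into $U_K\subset K$; then $\delta$ is $G$--conjugate into $K$ and fixes a point of $M=G/K$. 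The delicate point is precisely this transfer of the equal--rank condition to the maximal compact subgroups, together with the matching of $G$--conjugacy of elliptic elements with conjugacy inside $U_G$. In the principal case of Section~\ref{ssec9a}, where $M$ is Riemannian of noncompact type and $K=U_G$ is itself maximal compact, this collapses to the classical Cartan fixed-point theorem — a compact group of isometries of a Hadamard manifold has a fixed point — and no rank bookkeeping is required.
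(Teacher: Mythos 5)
Your easy direction of (2), the lifting argument reducing the hard direction of (2) to (1), and the propagation step inside (1) (a fixed point of $p(\gamma)$ on $M$ together with a transitive commuting group forces $p(\gamma)$ to act trivially on $M$, hence to lie in the ineffectivity kernel $Z_G$) are all sound. The gap is exactly where you flagged it, and the route you propose for closing it does not work. The hypothesis $\rank K=\rank G$ in (\ref{setup-ps}) is a statement about reductive (complexified) rank and does \emph{not} imply $\rank U_K=\rank U_G$ for maximal compact subgroups. For example take $G=Sp(4,\R)$ and $K=GL^+(2,\R)$ embedded as $A\mapsto\mathrm{diag}(A,{}^tA^{-1})$: this is a $\theta$--stable equal--rank subgroup with $K_1=SL(2,\R)$, $K_2=\R^+$, and its compact dual pair is $Sp(2)\supset U(2)$, so it is an associated form of a fibration from Section \ref{ssec2a} and falls under (\ref{setup-ps}). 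Here $U_K=SO(2)$ has rank $1$ while $U_G=U(2)$ has rank $2$; a regular elliptic element of $G$ has eigenvalues $\{e^{\pm i\theta_1},e^{\pm i\theta_2}\}$ all distinct, whereas every element of $K$ with unimodular spectrum has each eigenvalue with multiplicity two, so a regular elliptic element is not conjugate into $K$ and has no fixed point on $M=G/K$. Thus ``elliptic plus equal rank'' alone does not produce the fixed point; the transitivity of the centralizer must be used \emph{before} you look for one, not only afterwards for propagation. (Such elements have small centralizers, so they do not contradict the Corollary --- but they do refute the lemma your argument rests on.)

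The paper avoids the issue by never seeking a fixed point on the noncompact $M$. Writing $J$ for the centralizer of $p(\gamma)$ in $G^\dagger$, transitivity gives $\gg=\gj+\gk$; because $p(\gamma)$ is semisimple this identity passes to the complexifications and then to the compact real forms, $\gg^u=\gj^u+\gk^u$, so $J^u$ is transitive on $M^u$. Ellipticity places $p(\gamma)$ (after conjugation) in $G^u$, where it is therefore an isometry of constant displacement of the compact normal homogeneous space $M^u$, and Theorem \ref{cw} forces it into the center. On $M^u$ the fixed point you want comes for free from Lemma \ref{fp-below}; that is precisely what fails on $M$. If you wish to retain your fixed--point formulation, you must first transfer both $p(\gamma)$ and the transitive centralizer to the compact dual and run the propagation there; as written, your proof of (1) does not go through.
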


\begin{proof}  Let $J$ denote the centralizer of $p(\gamma)$ in $G^\dagger$. 
Then $\gg = \gj + \gk$.  As $p(\gamma)$ is semisimple $\gg^u = \gj^u + \gk^u$ so
$J^u$ is transitive on $M^u$.  Also, $p(\gamma) \in G^u$ because it is elliptic.
Now $p(\gamma)$ has constant displacement on $M^u$ and the assertion follows 
from Theorem \ref{cw}.
\end{proof}

\subsection{Isotropy--Split Fibrations over Odd Indefinite Symmetric Spaces}
\label{ssec9d}

We now deal with the cases where $\rank K < \rank G$ and $M = G/K$ is a
pseudo--riemannian symmetric space.  Here we follow the lines of Section 
\ref{sec7}.
\medskip

According to the classification, the only compact irreducible riemannian 
symmetric spaces $G^u/K^u$ with $\rank K < \rank G$ are
{\footnotesize
$$
SU(n)/SO(n), SU(2n)/Sp(n), 
SO(2s+2+2t)/[SO(2s + 1)\times SO(1+2t)], 
E_6/F_4, E_6/Sp(4), (K^u\times K^u)/diag(K^u).
$$
}
The only ones of these spaces 
for which $K^u$ splits are the odd dimensional oriented real
Grassmann manifolds $SO(2s+2+2t)/[SO(2s + 1)\times SO(1+2t)]$ with
$s, t \geqq 1$.  Thus we look at
\begin{equation}\label{odd-fibr-ps}
\begin{aligned}
\pi: \widetilde{M} \to M &\text{ given by } G/K_1 \to G/K_1K_2 \text{ where }
     G = SO^0(2s+1,1+2t),\\
& K_1 = SO^0(u,v)\, \text{ and }\, K_2 =  SO^0(a,b)
\end{aligned}
\end{equation}
with conditions $u+a=2s+1$ and $v+b = 1+2t$ for the signature of 
$\R^{2s+1,1+2t}$, and $u+v = 2s+1$ and $a+b = 1+2t$ for $K_1^u = SO(2s+1)$
and $K_2^u = SO(1+2t)$.  Here $a$ determines $u$, $v$ and $b$, so in fact
$$
K_1 = SO^0(2s+1-a,a) \text{ and } K_2 = SO^0(a,1+2t-a) \text{ for }
	0 \leqq a \leqq \min(2s+1,1+2t).
$$

The symmetry of the pseudo--riemannian symmetric space $M$ is $\rho =
\Ad\left ( \begin{smallmatrix} I_{2s+1} & 0\\ 0 & -I_{1+2t} \end{smallmatrix} 
\right )$.
If $a$ is even then $K_1$ has outer automorphism $\sigma_{1,a} =
\Ad\left ( \begin{smallmatrix} I_{2s} & 0\\ 0 & -1 \end{smallmatrix} \right )$
and $K_2$ has outer automorphism $\sigma_{2,a} =
\Ad\left ( \begin{smallmatrix} -1 & 0\\ 0 & I_{2t} \end{smallmatrix} \right )$;
then $\rho$ and $\sigma_{2,a}$ belong to the same component of $O(2s+1,1+2t)$.
If $a$ is odd then $K_1$ has outer automorphism $\tau_{1,a} =
\Ad\left ( \begin{smallmatrix} -1 & 0\\ 0 & I_{2s} \end{smallmatrix} \right )$ 
and $K_2$ has outer automorphism $\tau_{2,a} =
\Ad\left ( \begin{smallmatrix} I_{2t} & 0\\ 0 & -1 \end{smallmatrix} \right )$;
then $\rho$ and $\tau_{1,a}$ belong to the same component of $O(2s+1,1+2t)$.
Combining this with Corollary \ref{fiber-inv-ps}, we have

\begin{proposition}\label{iso-st-ps}
The full isometry group $I(\widetilde{M}) =
\left ( O(2s+1,1+2t) \times r(SO(a,1+2t-a)) \right )$.  
\end{proposition}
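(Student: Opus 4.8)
The plan is to mirror the compact argument of Proposition \ref{iso-st}, extended to indefinite signature, the new feature being that the ambient orthogonal group now has four components rather than two. Since $\widetilde M$ is irreducible, Proposition \ref{ident-comp-ps} first supplies the identity component $I^0(\widetilde M) = \widetilde G = G \times r(K_2) = SO^0(2s+1,1+2t) \times r(SO^0(a,1+2t-a))$, so the task reduces to determining the group of components and realizing each one by an explicit isometry.

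For the inclusion $O(2s+1,1+2t) \times r(SO(a,1+2t-a)) \subseteq I(\widetilde M)$ I would argue directly. Viewing $\widetilde M = G/K_1$ as a pseudo--Stieffel manifold, the full orthogonal group $O(2s+1,1+2t)$ of $\R^{2s+1,1+2t}$ acts on the left and the frame--rotation group $r(SO(a,1+2t-a))$ acts on the fibers on the right, each extending its identity component. Every coset representative is an orthogonal matrix and so preserves the normal metric built from the Killing form; concretely, the matrices $\rho$, $\sigma_{1,a}$, $\sigma_{2,a}$, $\tau_{1,a}$, $\tau_{2,a}$ displayed before the statement furnish generators for the extra components.

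For the reverse inclusion I would invoke Corollary \ref{fiber-inv-ps}: every isometry of $\widetilde M$ preserves the fibration $\pi : \widetilde M \to M$ and so induces an isometry of the symmetric base $M$, giving a homomorphism $p : I(\widetilde M) \to I(M)$ whose kernel is the group of vertical isometries. I would identify that kernel as exactly $r(SO(a,1+2t-a))$ and, following Cartan's description of the isometries of the symmetric space $M$, identify the image of $p$ with the effective action of $O(2s+1,1+2t)$ on the pseudo--Grassmannian; combining the two pieces shows that no isometry escapes the claimed group.

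The hard part will be this last step, for a reason peculiar to the noncompact setting. Because $O(2s+1,1+2t)$ has four components while its compact dual $O(2s+2+2t)$ has only two, the device used in Theorem \ref{isogroup-nonc} of bounding the component group of $I(\widetilde M)$ by that of the compact dual through elliptic representatives is unavailable, so the component count must be carried out by hand. The delicate point is to count the geodesic symmetry $\rho$ exactly once: the computation preceding the statement shows that $\rho$ lies in the same component of $O(2s+1,1+2t)$ as $\sigma_{2,a}$ when $a$ is even, and as $\tau_{1,a}$ when $a$ is odd, so $\rho$ contributes no component independent of the orthogonal--group and frame--rotation factors. Verifying these coincidences, and checking that no further relations collapse the component group, is where the real work lies.
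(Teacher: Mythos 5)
Your proposal follows essentially the same route as the paper: the printed proof consists precisely of exhibiting the matrices $\rho$, $\sigma_{i,a}$, $\tau_{i,a}$ that realize the symmetry of $M$ and the outer automorphisms of $K_1$ and $K_2$ inside $O(2s+1,1+2t)$, and then combining this with Corollary \ref{fiber-inv-ps} and the identity-component result, exactly as you outline. The component-counting subtlety you flag at the end (that the compact-dual injection of Theorem \ref{isogroup-nonc} cannot bound the component group here) is a fair observation, but the paper leaves that verification equally implicit, so it does not mark a divergence in approach.
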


The proof of Corollary \ref{homog-q-ps} is valid for our isotropy--split
fibrations (\ref{odd-fibr-ps}), except that we reduce to Theorem \ref{cw-st} 
instead of Theorem \ref{cw}.  Thus:

\begin{corollary}\label{homog-q-psst}
Let $\pi: \widetilde{M} \to M$ be one of the isotropy--split fibrations 
{\rm (\ref{odd-fibr-ps})}.  Let $p$
denote the projection $\widetilde{G}^\dagger \to G^\dagger$ from
$I(\widetilde{M})$ to $O(2s+1,1+2t)$.

{\rm (1)} If $\gamma \in I(\widetilde{M})$ such that $p(\gamma)$ is
elliptic and the centralizer of $\gamma$ is transitive on $\widetilde{M}$.
Then $p(\gamma) = \pm I$.

{\rm (2)} Consider a pseudo--riemannian covering
$\widetilde{M} \to \Gamma\backslash \widetilde{M}$ such that $p(\Gamma)$
has compact closure $($for example such that $p(\Gamma)$ is
finite or $p(\Gamma) \subset K)$.  Then $\Gamma\backslash \widetilde{M}$ 
is homogeneous if and only if $p(\Gamma) \subset \{\pm I\}$.
\end{corollary}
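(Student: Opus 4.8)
The plan is to follow the proof of Corollary \ref{homog-q-ps} almost verbatim, replacing the appeal to Theorem \ref{cw} by an appeal to Theorem \ref{cw-st} at the last step; this is exactly what trades the center $Z_G$ for $\{\pm I\}$, since for the fibration (\ref{odd-fibr-ps}) the compact dual base $M^u = SO(2s+2t+2)/[SO(2s+1)\times SO(1+2t)]$ has $\rank K^u = s+t < s+t+1 = \rank G^u$, so $\chi(M^u) = 0$ and Theorem \ref{cw-st} is the governing compact result.

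For part (1), I would begin with the transitivity hypothesis. Let $Z$ be the centralizer of $\gamma$ in $I(\widetilde{M})$; it is transitive on $\widetilde{M}$ by assumption, and by Corollary \ref{fiber-inv-ps} its image $J := p(Z)$ centralizes $p(\gamma)$ and is transitive on $M = G/K$, which gives the infinitesimal relation $\gg = \gj + \gk$. Since $p(\gamma)$ is elliptic it is semisimple, so I may arrange a Cartan involution $\theta$ fixing a conjugate of $p(\gamma)$ and stabilizing $\gj$; passing to compact real forms then yields $\gg^u = \gj^u + \gk^u$, and ellipticity places $p(\gamma)$ in $G^u$. The Cartan--dual element $\gamma^u \in \widetilde{G}^u$ now has a centralizer transitive on the compact Stieffel manifold $\widetilde{M^u} = G^u/K_1^u$, and a transitive centralizer forces $\gamma^u$ to be an isometry of constant displacement there. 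Theorem \ref{cw-st} then confines its $G^u$--component to $\{\pm I\}$, so $p(\gamma) = \pm I$.

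Part (2) combines this with the easy half of the Homogeneity Conjecture. For the reverse implication, if $p(\Gamma) \subset \{\pm I\}$ then $\Gamma \subset (\{\pm I\} \times r(SO(a,1+2t-a)))$ by Proposition \ref{iso-st-ps}; because $\pm I$ is central in $O(2s+1,1+2t)$ and the right action $r(K_2)$ commutes with the left action of $G$, the transitive group $G$ centralizes $\Gamma$ and descends to a transitive isometry group of $\Gamma\backslash\widetilde{M}$, which is therefore homogeneous. For the forward implication, homogeneity of $\Gamma\backslash\widetilde{M}$ makes $I^0(\Gamma\backslash\widetilde{M})$ lift to a connected group that centralizes $\Gamma$ and is transitive on $\widetilde{M}$, so the centralizer of each $\gamma \in \Gamma$ is transitive; the hypothesis that $p(\Gamma)$ has compact closure places each $p(\gamma)$ in a compact subgroup and hence makes it elliptic, so part (1) applies and gives $p(\gamma) = \pm I$, i.e. $p(\Gamma) \subset \{\pm I\}$.

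The step I expect to be the main obstacle is the descent to the compact dual in part (1): I must verify that a single Cartan involution can be chosen compatibly with $p(\gamma)$, with the centralizer decomposition $\gg = \gj + \gk$, and with the splitting $K = K_1K_2$, and that under the Cartan--dual correspondence (Proposition \ref{ident-comp-ps} and Theorem \ref{isogroup-nonc}) the transitive centralizer of $\gamma$ on $\widetilde{M}$ really does pass to a transitive centralizer of $\gamma^u$ on $\widetilde{M^u}$. Once that transfer is in hand the remaining steps are routine; the outer--automorphism component of $\gamma$ causes no difficulty, since ellipticity of $p(\gamma)$ already forces it into $G^u$.
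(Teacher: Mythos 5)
Your proposal matches the paper's own argument: the paper proves this corollary precisely by observing that the proof of Corollary \ref{homog-q-ps} (centralizer transitive on $M$ gives $\gg = \gj + \gk$, semisimplicity of the elliptic element $p(\gamma)$ gives $\gg^u = \gj^u + \gk^u$ and hence constant displacement on the compact dual) goes through verbatim once Theorem \ref{cw} is replaced by Theorem \ref{cw-st}, which is exactly the substitution you make. Your added detail on the Cartan--dual transfer and on part (2) only fills in steps the paper leaves implicit, so the approach is essentially identical.
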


%
%
%
%

\medskip
\noindent Department of Mathematics \hfill\newline
\noindent University of California\hfill\newline
\noindent Berkeley, California 94720-3840, USA\hfill\newline
\smallskip
\noindent {\tt jawolf@math.berkeley.edu}

\enddocument
\end
\begin{thebibliography}{XX}

\bibitem{BN2008a} V. N. Berestovskii \& Y. G. Nikonorov,
Killing vector fields of constant length on locally symmetric riemannian 
manifolds, Transformation Groups, {\bf 13} (2008), 25--45.

\bibitem{BN2008b} V. N. Berestovskii \& Y. G. Nikonorov,
On Clifford--Wolf homogeneous riemannian 
manifolds, Doklady Math., {\bf 78} (2008), 807--810.

\bibitem{BN2009} V. N. Berestovskii \& Y. G. Nikonorov,
Clifford--Wolf homogeneous riemannian manifolds. J. Differential Geom. 
{\bf 82} (2009), 467--500.

\bibitem{BdS1949} A. Borel \& J. de Siebenthal, 
Les sous-groupes ferm\' es de rang maximum des groupes de Lie clos,
Comment. Math. Helv. {\bf 23} (1949), 200--221.


\bibitem{C1986} O. C\' ampoli,
Clifford isometries of the real Stieffel manifolds,
Proc. Amer. Math. Soc. {\bf 97} (1986), 307--310.

\bibitem{C1927a} \' E. Cartan,
La g\' eom\' etrie des groupes simples, Annali Mat. {\bf 4} (1927), 209--256.

\bibitem{C1927b} \' E. Cartan,
Sur certaines forms riemanni\` ennes remarquables des g\' eom\' etries
\` a groupe fondamental simple, 
Ann. Sci. \' Ecole  Norm. Sup. {\bf 44} (1927), 345--367.

\bibitem{DW2012} S. Deng \& J. A. Wolf, 
Locally symmetric homogeneous Finsler spaces,
International Mathematical Research Notes (IMRN) {\bf 2013} (2012),
4223--4242.

\bibitem{DX2012} S. Deng \& M. Xu,
Clifford--Wolf translations of Finsler spaces, Forum Math, 2012.

\bibitem{DX2013a} S. Deng \& M. Xu,
Clifford--Wolf homogeneous Randers spaces, 
J. Lie Theory {\bf 23} (2013), 837--845.

\bibitem{DX2013b} S. Deng \& M. Xu,
Clifford--Wolf homogeneous Finsler metrics on spheres,
Annali di Matematica Pura ed Applicata, 2013.

\bibitem{DX2014a} S. Deng \& M. Xu,
Clifford--Wolf homogeneous Randers spheres,
Israel J. Math., {\bf 199} (2014), 507--525.

\bibitem{DX2014b} S. Deng \& M. Xu
Clifford--Wolf translations of left
invariant Randers metrics on compact Lie groups'', Quarterly J. Math.
{\bf 65} (2014), 133--148.

\bibitem{DX2015} S. Deng \& M. Xu,
Clifford--Wolf homogeneous left 
invariant $(\alpha,\beta) $-metrics on compact semi-simple Lie groups,
to appear.

\bibitem{DMW1986} I. Dotti Miatello, R. J. Miatello \& J. A. Wolf,
Bounded isometries and homogeneous riemannian quotient manifolds,
Geometriae Dedicata {\bf 21} (1986), 21--28.

\bibitem{D1983} M. J. Druetta,
Clifford translations in manifolds without focal points,
Geometriae Dedicata, {\bf 14} (1983), 95--103.

\bibitem{F1963} H. Freudenthal,
Clifford-Wolf-Isometrien symmetrischer R\" aume,
Math. Ann. {\bf 150} (1963), 136--149.


\bibitem{G1996} C. Gordon,
Homogeneous riemannian manifolds whose geodesics are orbits,
in Topics in Geometry in Memory of Joseph D'Atri, Birkh\" auser, 1996,
155--174.

\bibitem{KV1991} O. Kowalski \& L. Vanhecke,
Riemannian manifolds with homogeneous geodesics,
Boll. Un. Math. Ital. B (7) {\bf 5} (1991), 189-246.

\bibitem{N2013} Y. G. Nikonorov,
Geodesic orbit manifolds and Killing fields of constant length,	
Hiroshima Mathematical Journal Volume {\bf 43} (2013), 129--137.

\bibitem{N2016} Y. G. Nikonorov,
Killing vector fields of constant length on compact homogeneous riemannian 
manifolds, to appear, \{arXiv 1504.03432 \}

\bibitem{O1969} V. Ozols,
Critical points of the displacement function of an isometry,
J. Differential Geometry {\bf 3} (1969), 411--432.

\bibitem{O1973} V. Ozols,
Critical sets of isometries, Proc. Sympos. Pure Math.
{\bf 27} (1973), 375--378.

\bibitem{O1974} V. Ozols,
Clifford translations of symmetric spaces, 
Proc. Amer. Math. Soc. {\bf 44} (1974), 169--175.

\bibitem{R2010} S. Reggiani,
On the affine group of a normal homogeneous manifold,
Ann. Global Analysis and Geometry {\bf 37} (210), 351--359.

\bibitem{S1956} J. de Siebenthal, 
Sur les groupes de Lie compacts non connexes, 
Comment. Math. Helv. {\bf 31} (1956), 41--89. 

\bibitem{W1960} J. A. Wolf,
Sur la classification des vari\' et\' es riemanni\`ennes
homog\` enes \` a courbure constante'', C. R. Acad. Sci. Paris,
{\bf 250} (1960), 3443--3445.

\bibitem{W1961} J. A. Wolf,
Vincent's conjecture on Clifford translations of the sphere'',
Commentarii Mathematici Helvetici, {\bf 36} (1961), 33--41.

\bibitem{W1962} J. A. Wolf,
Locally symmetric homogeneous spaces.  Commentarii
Mathematici Helvetici, {\bf 37} (1962), 65--101.

\bibitem{W1963} J. A. Wolf,
On locally symmetric spaces of non-negative curvature and certain
other locally homogeneous spaces,
Comm. Math. Helv. {\bf 37} (1963), 266-295.

\bibitem{W1964} J. A. Wolf,
Homogeneity and bounded isometries in manifolds of negative curvature,
Illinois Journal of Mathematics, {\bf 8} (1964), 14--18.

\bibitem{W1966} J. A. Wolf, 
Spaces of Constant Curvature, Sixth Edition.  American Mathematical Society,
2011.  The result cited is the same in all editions.

\bibitem{W2007} J. A. Wolf,
Harmonic Analysis on Commutative Spaces.  Math. Surveys \&
Monographs, vol. 142, American Mathematical Society, 2007.

\bibitem{W2016} J. A. Wolf,
Bounded isometries and homogeneous quotients, Journal of Geometric 
Analysis {\bf 26} (2016), 1--9.

\bibitem{WG1968a} J. A. Wolf \& A. Gray,
Homogeneous spaces defined by Lie group automorphisms, I,
J. Differential Geometry {\bf 2} (1968), pp. 77--114.

\bibitem{WG1968b} J. A. Wolf \& A. Gray,
Homogeneous spaces defined by Lie group automorphisms, II,
J. Differential Geometry {\bf 2} (1968), pp. 115--159.

\bibitem{WPX2016} J. A. Wolf, F. Podest\` a \& M. Xu,
Toward a classification of Killing vector fields of constant length
        on pseudo--riemannian normal homogeneous spaces,
Journal of Differential Geometry, to appear.

\bibitem{WX2016a} J. A. Wolf \& Ming Xu,
Killing vector fields of constant length on riemannian normal homogeneous 
spaces, Transformation Groups, to appear. 

\bibitem{WX2016b} J. A. Wolf \& Ming Xu,
Elliptic Killing vector fields of constant length on pseudo--riemannian 
homogeneous spaces, in preparation.

\end{thebibliography}
